\DeclareSymbolFontAlphabet{\mathbb}{AMSb}
\DeclareSymbolFontAlphabet{\mathbbl}{bbold}
\DeclareSymbolFont{euletters}{U}{eur}{m}{n}
\DeclareSymbolFont{eufrakletters}{U}{euf}{m}{n}
\DeclareFontFamily{U}{wncy}{}
    \DeclareFontShape{U}{wncy}{m}{n}{<->wncyr10}{}
    \DeclareSymbolFont{mcy}{U}{wncy}{m}{n}
    \DeclareMathSymbol{\Sha}{\mathord}{mcy}{"58}
\newcommand{\Arrow}[1]{%
\parbox{#1}{\tikz{\draw[->](0,0)--(#1,0);}}
}
\newcommand{\shortrightarrow}{\Arrow{.15cm}}
\newcommand{\ilim}{\underleftarrow{\text {\rm lim}}}
\newcommand{\Alb}{\text{\rm Alb}}
\newcommand{\Pic}{\text{\rm Pic}}
\newcommand{\Res}{\text{\rm Res}}
\newcommand{\Sym}{\text{\rm Sym}}
\renewcommand{\dim}{\text {\rm dim}}
\newcommand{\Lie}{\text {\rm Lie}}
\newcommand{\Rlim}{\text {\rm Rlim}}
\newcommand{\Spec}{{\operatorname{Spec\,}}}
\newcommand{\Prism}{{\mathlarger{\mathbbl{\Delta}}}}
\newcommand{\Proj}{\text{\rm Proj}}
\newcommand{\an}{{\text{\rm an}}}
\newcommand{\rk}{\text{\rm rk\,}}
\renewcommand{\ker}{\text{\rm ker}\,}
\newcommand{\ad}{{\text{\rm ad}}}
\newcommand{\ed}{{\text{\rm ed}}}
\renewcommand{\Im}{\text {\rm Im }}
\renewcommand{\log}{\text{\rm log}}
\renewcommand{\th}{^{\text{th}}}
\newcommand{\Gal}{\text{\rm Gal}}
\newcommand{\hotimes} {\Hat\otimes}
\newcommand{\Sh}{\text{\rm Sh}}
\newcommand{\A}{\mathcal A}
\renewcommand{\AA}{\mathbb A}
\newcommand{\CC}{\mathbb C}
\DeclareMathSymbol{\fk}\mathord{eufrakletters}{"6B}
\newcommand{\F}{\mathbb F}
\newcommand{\GG}{\mathbb G}
\DeclareMathSymbol{\ei}\mathord{euletters}{"69}
\DeclareMathSymbol{\etau}\mathord{euletters}{"1C}
\DeclareMathSymbol{\eiota}\mathord{euletters}{"13}
\DeclareMathSymbol{\eK}\mathord{euletters}{"4B}
\renewcommand{\L}{\mathcal L}
\newcommand{\M}{\mathcal M}
\renewcommand{\O}{\mathcal O}
\newcommand{\Q}{\mathbb Q}
\newcommand{\RR}{\mathbb R}
\renewcommand{\SS}{\mathbb S}
\newcommand{\Z}{\mathbb Z}
\newcommand{\et}{\text{\rm\'et}}
\newcommand{ \iso} {\overset \sim \longrightarrow}
\newcommand{\Aut}{\text {\rm Aut}}
\newcommand{\lps}{[\![}
\newcommand{\rps}{]\!]}
\newtheorem{ithm}{Theorem}
\numberwithin{equation}{subsection}
\newtheorem{thm}[equation]{Theorem}
\newtheorem{cor}[equation]{Corollary}
\newtheorem{lemma}[equation]{Lemma}
\newtheorem{lem}[equation]{Lemma}
\newtheorem{prop}[equation]{Proposition}
 \theoremstyle{definition}
 \theoremstyle{definition}
 \theoremstyle{remark}
\newtheorem{rem}[equation]{\bf Remark}
\newtheorem{para}[equation]{\bf}
\theoremstyle{definition}
\begin{document}

\title{Essential dimension via prismatic cohomology }

\author{Benson Farb, Mark Kisin and Jesse Wolfson}
\address{Department of Mathematics, University of Chicago}
\email{farb@math.uchicago.edu}
\address{\vskip -.5 cm Department of Mathematics, Harvard }
\email{kisin@math.harvard.edu}
\address{ \vskip -.5 cm Department of Mathematics, University of California-Irvine}
\email{wolfson@uci.edu}


\thanks{The authors are partially supported by NSF grants DMS-1811772 (BF), DMS-1902158 (MK) and
DMS-1811846 and DMS-1944862 (JW)}



\begin{abstract}  For $X$ a smooth, proper complex variety we show that for $p\gg 0$, the restriction of the mod $p$ cohomology $H^i(X,\F_p)$ to any Zariski open has dimension at least $h^{0,i}_X$.  The proof uses the prismatic cohomology of Bhatt-Scholze.

We use this result to obtain lower bounds on the $p$-essential dimension of covers of complex varieties. 
For example, we prove the $p$-incompressibility of the mod $p$ homology cover of an abelian variety, confirming a conjecture of Brosnan for sufficiently large $p.$ By combining these techniques with the theory of toroidal 
compactifications of Shimura varieties, we show that for any Hermitian symmetric domain $X,$  there exist $p$-congruence covers that are $p$-incompressible.
\end{abstract}



\maketitle
\tableofcontents

\section{Introduction} Let $f:Y \rightarrow X$ be a finite map of complex algebraic varieties. The {\em essential dimension}
$\ed(Y/X)$ of $f$ is the smallest integer $e$ such that, over some dense open of $X,$ the map $f$ arises as the pullback of a map of varieties 
of dimension $e.$ The motivation for studying this invariant goes back to classical questions about reducing the number of 
parameters in a solution to a general $n\th$ degree polynomial. It first appeared in work of Kronecker \cite{Kronecker} and Klein \cite{Klein} on the quintic, and was formally defined by Tschebotarow \cite{Tschebotarow}, and in a modern context by Buhler-Reichstein \cite{BR}, \cite{BR2}. 

An idea which goes back to Arnol'd \cite{Arnold}, \cite{Arnold2}, is to use characteristic classes to obtain a lower bound for essential dimension: if $f$ has covering group $G$ then the cohomology of $G$ sometimes contributes to the cohomology of $X$ - such classes on $X$ are called {\em characteristic}. If they contribute to the cohomology of $X$ in some degree $i,$ and $X$ is affine, 
then $f$ cannot arise as a pullback of a $G$-cover of dimension $< i.$ 
The main difficulty with this approach is that to get a lower bound for $\ed(Y/X)$ one needs to know that (some of) the classes coming from $G$ continue to be nontrivial on any Zariski open in $X.$ Indeed in {\it loc.~cit.~}Arnol'd was able to address the question 
only without restricting to open subsets. 
For the universal $S_n$-cover, this problem was solved by Buhler-Reichstein \cite{BR2}. Following a suggestion of 
Serre, they showed the Stiefel-Whitney classes which arise in that situation are nonzero at the generic point. 

In this paper we introduce a new method, which solves this restriction problem in many cases. In particular, it allows us to give lower bounds on $\ed(Y/X)$ in many cases when $X$ is {\em proper}. 
Previously, lower bounds on the essential dimension of coverings 
of proper varieties were known only in very special cases \cite{CT,FKW,FS}. In fact our results apply to the so called 
{\em $p$-essential dimension} 
$\ed(Y/X;p)$ ($p$ a prime), where one is allowed to pull back the covering not just to Zariski opens, but to auxiliary coverings of degree prime to $p$  \cite{ReiYo}. 
A first example of our results is the following, which appears as Cor.~\ref{cor:ellabessdimlowerbounds} below. 

\begin{ithm}\label{ithm:elemab} Let $X$ be a smooth proper complex variety of maximal Albanese dimension, 
and $Y \rightarrow X$ its mod $p$ homology cover. 
Then for $p \gg 0,$ $Y\rightarrow X$ is {p-incompressible}, i.e. 
$$ \ed(Y/X;p) = \dim X. $$
\end{ithm}
\noindent 
Recall that the mod $p$ homology cover is the covering corresponding to the maximal elementary abelian $p$-group quotient of the fundamental group of $X,$ and that $X$ has maximal Albanese dimension if the image of its Albanese map has dimension equal to $\dim X.$ 
The condition $p \gg 0$ can be made effective in terms of the behavior of $X$ and its Albanese map under reduction mod $p.$

There are clearly a plethora of varieties to which Theorem~\ref{ithm:elemab} applies. This includes the following: 

\begin{itemize}
\item  $X$ an abelian variety.  
\label{ex:abvar}
\item $X=C_1\times \cdots \times C_r$ for curves $C_i$ with ${\rm genus}(C_i)\geq 1.$ 
\item Certain cocompact, $n$-dimensional ball quotients for $n\geq 2$; see \cite[p.~167, Cor.~5.9]{BW}.
\label{ex:modp}
\end{itemize}

When $X$ is an abelian variety, this confirms (almost all of) a conjecture of Brosnan \cite[Conj.~6.1]{FS}, 
which was previously known only for either a very general abelian variety, or in dimension at most 3 for a positive density set of primes \cite{FS}, 
compare also \cite[Cor.~p14]{Bogomolov}. The result for a product of curves was known only for a generic product of elliptic curves, by a result of Gabber \cite{CT}. If $X$ is a generic abelian variety, or a product of generic curves, then the effective version of Theorem~\ref{ithm:elemab} 
actually implies that the conclusion of the theorem holds for all $p.$ We leave it to the reader to find further examples. 

Our results are not actually limited to elementary abelian $p$-covers. One can apply them to $G$-covers for any finite group $G,$ but 
the conditions which have to hold are rather more restrictive:

\begin{ithm}\label{ithm:Gcover} Let $X$ be a smooth, proper complex variety, $G$ a finite group, and $Y \rightarrow X$ a $G$-cover. 
Suppose that $X$ has unramified good reduction at $p,$ and let $i < p-2.$ If 
$H^0(X, \Omega^i_X) \neq  0$ and the map 
$H^i(G,\F_p) \rightarrow H^i(X, \F_p)$
is surjective then
$$ \ed(Y/X;p) \geq i.$$
\end{ithm}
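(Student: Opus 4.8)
The plan is to deduce Theorem~\ref{ithm:Gcover} from the paper's central cohomological result --- the nonvanishing, for $p$ in the stated range, of the restriction map $H^i(X,\F_p)\to H^i(U,\F_p)$ for every dense Zariski open $U\subseteq X$ (this is where the hypotheses $h^{0,i}_X=\dim H^0(X,\Omega^i_X)\neq 0$, $i<p-2$ and unramified good reduction at $p$ are used) --- via the classical characteristic-class lower bound for essential dimension, upgraded to $p$-essential dimension by a prime-to-$p$ transfer.

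Suppose, for contradiction, that $\ed(Y/X;p)=e\le i-1$. It is standard that $\ed(Y/X;p)$ equals the $p$-essential dimension of the $G$-torsor $T=Y\times_X\Spec\CC(X)$ over $\CC(X)$; unwinding this, $T$ becomes defined over a subfield $K_0$ of some finite prime-to-$p$ extension $K'$ of $\CC(X)$ with $\mathrm{trdeg}_\CC K_0=e$. Spreading out, we obtain: a dense open $U\subseteq X$; a finite surjective morphism $\pi\colon U'\to U$ of degree prime to $p$, with $\CC(U')=K'$; a variety $V$ over $\CC$ with $\CC(V)=K_0$ (so $\dim V=e$); a $G$-cover $W\to V$; and a morphism $g\colon U'\to V$ with $g^\ast W\cong \pi^\ast(Y|_U)$ as $G$-covers. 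Since we work in characteristic zero, after shrinking $U$, $U'$ and $V$ compatibly --- keeping $\pi$ finite --- we may assume $V$ is \emph{smooth and affine}. Then Artin vanishing (a smooth affine complex variety of dimension $e$ has the homotopy type of a CW complex of dimension $\le e$) gives $H^j(V,\F_p)=0$ for $j>e$, and in particular $H^i(V,\F_p)=0$ since $i>e$.

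Let $c\colon X\to BG$ classify $Y\to X$, so that the map $H^i(G,\F_p)=H^i(BG,\F_p)\to H^i(X,\F_p)$ of the statement is $c^\ast$. The isomorphism $g^\ast W\cong\pi^\ast(Y|_U)$ exhibits the classifying map of $\pi^\ast(Y|_U)$ as the composite $U'\xrightarrow{g}V\to BG$ (up to homotopy), so the composite
\[
H^i(G,\F_p)\xrightarrow{\ c^\ast\ }H^i(X,\F_p)\xrightarrow{\ \mathrm{res}\ }H^i(U,\F_p)\xrightarrow{\ \pi^\ast\ }H^i(U',\F_p)
\]
factors through $H^i(V,\F_p)=0$, hence is zero. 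On the other hand, $c^\ast$ is surjective by hypothesis; the restriction $H^i(X,\F_p)\to H^i(U,\F_p)$ is nonzero by the central nonvanishing theorem; and $\pi^\ast$ is injective because $\pi$ is finite of degree $d$ prime to $p$, so that the transfer satisfies $\pi_\ast\circ\pi^\ast=d\cdot\mathrm{id}$, which is invertible on $\F_p$-cohomology. Therefore the composite is nonzero --- a contradiction. Hence $\ed(Y/X;p)\ge i$.

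Granting the central nonvanishing theorem, the step that requires the most care is the second one: extracting from the inequality $\ed(Y/X;p)\le i-1$ a genuine geometric factoring through a \emph{smooth affine} $V$ of dimension $\le i-1$ --- i.e.\ checking that the torsor spreads out to a $G$-cover over such a $V$, that its pullback along $g$ is isomorphic to $\pi^\ast(Y|_U)$, and that the various opens can be chosen so that $\pi$ remains finite (needed for the transfer). The remaining ingredients --- identifying $c^\ast$ with the characteristic-class map $H^i(G,\F_p)\to H^i(X,\F_p)$, the prime-to-$p$ transfer, and Artin vanishing --- are standard.
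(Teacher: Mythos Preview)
Your argument is correct and follows essentially the same route as the paper's proof (Proposition~\ref{prop:essdimlowerbounds} specialized to $D=\emptyset$): both combine the surjectivity of $H^i(G,\F_p)\to H^i(X,\F_p)$, the prismatic nonvanishing result (Corollary~\ref{cor:goodredn}), injectivity of pullback under prime-to-$p$ covers via the trace, and Artin vanishing on an affine target of dimension $<i$. The only cosmetic difference is that the paper restricts all the way to the generic point $\eta'$ of the prime-to-$p$ cover rather than to an open $U'$, which slightly streamlines the shrinking step you flag at the end.
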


We refer the reader to \ref{para:goodredn} for the definition of unramified good reduction. 
This is a condition that holds for $p$ sufficiently large. If $X$ is defined over a number field $F,$ the condition means that there is an 
unramified prime of $F$ over $p$ at which $X$ has good reduction. The condition holds for {\em all} primes if $X$ is a generic member 
of a sufficiently good moduli space, see \ref{para:genericgoodredn} below.

We also work with open varieties, and not just proper ones, in which case the formulation of the above results involves logarithmic differentials, 
see \ref{prop:essdimlowerbounds}, \ref{thm:ellabessdimlowerbounds} below.
Note that the condition on surjectivity in Theorem \ref{ithm:Gcover} is quite restrictive. If $X$ is an \'etale $K(\pi,1),$ then 
there always exists a $G,$ and a $G$-cover satisfying the condition, but this is not true in general.

In \S\ref{subsec:toric} we explain how to apply our results to torus torsors over abelian schemes. Here the fundamental group is a {\em generalized Heisenberg group} - a central extension of finitely-generated free abelian groups - and there is a natural notion of ``reduction mod $p$'' for such a group. We use Theorem \ref{ithm:Gcover} to show that for $p \gg 0,$ 
the corresponding covers are $p$-incompressible; see \ref{prop:edTtorsor}. The verification of the surjectivity hypothesis in Theorem \ref{ithm:Gcover} involves a somewhat intricate calculation of the cohomology of mod $p$ Heisenberg groups. 
This uses, in particular, that these groups carry a kind of mod $p$ weight filtration, which is somewhat suggestive of the mod $p$ weight filtration introduced by Gillet-Soul\'e \cite{GS}.

We then use Theorem \ref{ithm:Gcover} to deduce the $p$-incompressibility of certain covers of locally symmetric varieties. These have the form $\Gamma_1\backslash X \rightarrow \Gamma\backslash X,$ where $X$ is a Hermitian symmetric domain and $\Gamma$ is an arithmetic lattice in the corresponding real semisimple Lie group. Thus, $\Gamma$ is a congruence subgroup of $G(\Q)$ for  a semi-simple $\Q$-group $G.$ We consider {\em principal $p$-congruence covers} which 
means, roughly speaking, that there are no congruences mod $p$ involved in the definition of $\Gamma,$ and $\Gamma_1$ is the subgroup of elements which are trivial mod $p.$ A sample of our results is the following. 

\begin{ithm}\label{ithm:locsym} Suppose $X$ is an irreducible Hermitian domain and that the $\Q$-rank 
of $G$ (the rank of its maximal $\Q$-split torus) is equal to its $\RR$-rank. 
For any principal $p$-congruence cover $\Gamma_1\backslash X \rightarrow \Gamma \backslash X$, we have 
$$ \ed(\Gamma_1\backslash X \rightarrow \Gamma \backslash X; p ) = \dim X,$$ 
provided $p$ satisfies the following conditions if $X$ is not a tube domain:
\begin{itemize} 
\item If $X$ is of classical type, then $p > \frac{3}{2}\dim X.$ 
\item If $X$ is of type $E_6$, then $p$ is sufficiently large.
\end{itemize}
\end{ithm}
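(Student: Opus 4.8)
The strategy is to reduce the lower bound $\ed(\Gamma_1\backslash X \to \Gamma\backslash X; p) \geq \dim X$ to an application of Theorem \ref{ithm:Gcover} applied to a suitable smooth proper model, and then note that the upper bound $\ed \leq \dim X$ is automatic since $\Gamma\backslash X$ has dimension $\dim X$. The covering group of the principal $p$-congruence cover is $G(\Z/p)$ (or its relevant quotient), a finite group of Lie type, whose mod $p$ cohomology in low degrees is well understood. The heart of the matter is to find a degree $i = \dim X$ in which (a) $H^i(G(\Z/p),\F_p) \to H^i(\Gamma\backslash X, \F_p)$ is surjective, (b) $H^0$ of the sheaf of (log) differentials in degree $i$ is nonzero on a smooth proper model, and (c) $i < p - 2$, which is exactly what the numerical hypotheses $p > \frac{3}{2}\dim X$ (classical type) or $p$ large ($E_6$) are designed to guarantee, at least outside the tube-domain case where one expects an even cleaner statement.

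First I would replace $\Gamma\backslash X$ by a smooth projective toroidal compactification $(\Gamma\backslash X)^{\Sigma}$ in the sense of Ash--Mumford--Rapoport--Tai, with boundary a normal crossings divisor $D$; the open variety version of Theorem \ref{ithm:Gcover} (referenced as \ref{prop:essdimlowerbounds}) lets me work with logarithmic differentials $\Omega^i_{(\Gamma\backslash X)^{\Sigma}}(\log D)$. Here I would need: (1) a good reduction statement for the Shimura variety and its toroidal compactification at $p$ — this is available from the theory of integral models of Shimura varieties (Kisin, Madapusi Pai, Lan) precisely in the range of $p$ not dividing the relevant data, which the "principal $p$-congruence" hypothesis and the rank condition are meant to ensure; (2) the automorphic description of $H^0((\Gamma\backslash X)^{\Sigma}, \Omega^i(\log D))$ — by a result of Harris and others, holomorphic forms of top degree on the compactification correspond to certain cusp forms (or, more robustly, the canonical extension of the Hodge bundle has sections given by modular forms), and the $\RR$-rank $= \Q$-rank condition together with the Hermitian structure forces $H^{d,0} \neq 0$ for $d = \dim X$; (3) surjectivity of $H^i(G(\Z/p),\F_p) \to H^i(\Gamma\backslash X,\F_p)$ in degree $i = d$ — since $\Gamma\backslash X$ is a $K(\Gamma,1)$ and the cover is classified by $\Gamma \twoheadrightarrow G(\Z/p)$, this map factors as $H^i(G(\Z/p),\F_p) \to H^i(\Gamma,\F_p) = H^i(\Gamma\backslash X,\F_p)$, and one needs the stable-range results on the mod $p$ cohomology of finite groups of Lie type (Quillen, Friedlander--Parshall, and for the precise bounds the work on cohomological stability) to know this is surjective for $i$ up to roughly $p$, whence the bounds $p > \frac{3}{2}\dim X$.

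The main obstacle I expect is step (3) — controlling the map $H^\bullet(G(\Z/p),\F_p) \to H^\bullet(\Gamma,\F_p)$ in the specific degree $i = \dim X$, rather than in a range of small degrees. One must match the cohomological degree $\dim X$ of the Hermitian symmetric space with a degree in which the group cohomology of $G(\Z/p)$ both is nonzero and surjects onto the arithmetic quotient; this is where the distinction between tube and non-tube domains enters (in the tube case $\dim X$ is small relative to the rank, so no extra hypothesis on $p$ is needed, whereas off the tube case $\dim X$ can be as large as $\sim \frac{2}{3}p$ and one needs the sharper stability bound), and where type $E_6$ has to be singled out because the relevant cohomological stability constants are not as cleanly pinned down. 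A secondary difficulty is ensuring that the toroidal compactification can be chosen simultaneously smooth, projective, and with good reduction at $p$ compatibly with the covering, and that the canonical extension of the automorphic vector bundle computing $\Omega^{d}(\log D)$ indeed has a nonzero global section — this requires invoking the theory of $\Sigma$-families and the boundedness of toroidal compactifications in families over $\Z_{(p)}$, which is by now standard but must be cited carefully.
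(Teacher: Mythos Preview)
Your approach has a genuine gap at step (3), and it is not the route the paper takes. You want surjectivity of
\[
H^{\dim X}(G(\F_p),\F_p)\longrightarrow H^{\dim X}(\Gamma\backslash X,\F_p),
\]
and you appeal to Quillen/Friedlander--Parshall stability. Those results describe the mod $p$ cohomology of $G(\F_q)$ or compare it across different $q$; they do not give you control over the inflation map to $H^*(\Gamma,\F_p)$ in a specific high degree such as $\dim X$. There is no off-the-shelf statement that produces this surjectivity for the full finite group of Lie type, and the paper explicitly notes that the surjectivity hypothesis of Theorem \ref{ithm:Gcover} is ``quite restrictive.'' Your reading of the tube/non-tube dichotomy (``$\dim X$ small relative to the rank'') is also off: the distinction is structural, not numerical.

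The paper proceeds quite differently, and says so in the introduction: Theorem \ref{ithm:locsym} is \emph{not} proved by applying Theorem \ref{ithm:Gcover} to the Shimura variety. Instead one restricts the congruence cover to an analytic neighborhood $V_K$ of a boundary stratum of a toroidal compactification. That neighborhood is (an open piece of) a torus embedding $\xi_K(\Sigma)$ over an abelian variety $C_K$, and the restricted cover has group $U(\F_p)$, the $\F_p$-points of the unipotent radical of the rational parabolic $P_{\tilde\varSigma}$ --- a finite Heisenberg group, not all of $G(\F_p)$. The $\Q$-rank $=\RR$-rank hypothesis is exactly what guarantees this parabolic is defined over $\Q$ (Lemma \ref{lem:rationalcocharacter}). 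For Heisenberg groups the surjectivity \emph{can} be checked (Proposition \ref{cohredn}), and the characteristic-class argument is then run on the torus bundle (Proposition \ref{prop:edTtorsor}, Corollary \ref{cor:analyticedp}), with the bound $p>\dim X+\tfrac{1}{2}(\dim Z+1)\le \tfrac{3}{2}\dim X$ arising from that analysis. In the tube-domain case $C_K$ is a point, the cover is abelian, and no bound on $p$ is needed (Corollary \ref{cor:analyticedp2}, essentially Burda's theorem). Finally, since Zariski closed subsets of $\Sh_K^+$ pull back to Zariski closed analytic subsets of $V_K$, incompressibility of the restricted cover forces incompressibility of the original.
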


Note that for {\em any} irreducible Hermitian domain there are many examples where Theorem~\ref{ithm:locsym} applies. Our results actually apply to many examples of quotients of {\it reducible} Hermitian domains by irreducible lattices,  for example the case of Hilbert modular varieties. See Lemma \ref{lem:rationalcocharacter} and the discussion in \ref{para:rationalcocharacter}.
The study of the ($p$-)incompressibility of congruence covers goes back to work of Klein \cite{KleinLetter,KleinAbel}, and our results here add to a recent body of work \cite{FKW,FS,FKW2,BF}, and prove new cases of \cite[Conjecture 1]{BF}  in the context of locally symmetric varieties.

Theorem \ref{ithm:locsym} is {\em not} proved directly using our results on characteristic classes. Rather, we 
relate principal $p$-congruence covers to the covers of torus torsors over abelian varieties, with covering group a 
generalized mod $p$ Heisenberg group mentioned above, and we then apply 
our results about the latter covers. The connection between these two kinds of covers makes use of the theory of toroidal compactifications of Shimura varieties. 

We remark that when $X$ {\em is} a tube domain, the corresponding torus torsor is just a torus; the abelian variety is $0$-dimensional. 
In this case, one does not need our results on characteristic classes (this is why there is no restriction on $p$ in this case). 
The theorem can be deduced from a result of Burda \cite{Burda} on coverings of tori. 
The result when $X$ is a tube domain has also been independently proven by  Brosnan-Fakhruddin \cite{BF}, who used the fixed-point method instead of Burda's results. 

The conditions on $G$ and $p$ in Theorem \ref{ithm:locsym} are completely different from those considered in \cite{FKW,FS}. For example, in many cases when the results of {\it loc.~cit.~}apply, they give $p$-incompressibility not for sufficiently large primes, but only for an explicit set of primes of positive density. They are also restricted to groups of classical type. On the other hand, proper varieties pose no special problem for these results, whereas they cannot be handled by Theorem \ref{ithm:locsym}. This also shows, that one should not expect the bounds on $p$ in Theorem \ref{ithm:locsym} to be sharp, as there are also many cases which are covered by {\it both}  Theorem \ref{ithm:locsym} and \cite{FKW,FS}, but where the latter results impose no similar lower bound on $p.$

As mentioned above, our lower bounds on essential dimension rely on results 
that assert that cohomology classes do 
not vanish on restriction to any Zariski open. An example of such a statement is the following, which appears as Cor.~\ref{cor:goodredn} below:

\begin{ithm}\label{ithm:nonzerorestrict} Let $X$ be a smooth, proper, complex variety, with unramified good reduction at $p.$ 
Let $i < p-2,$ and $W \subset X$ a Zariski open. Then the image of the  map 
$$ H^i(X, \F_p) \rightarrow H^i(W, \F_p)$$ 
has dimension at least $h^{0,i}_X = \dim H^0(X, \Omega_X^i).$
\end{ithm}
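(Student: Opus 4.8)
The plan is to pass to a smooth proper model over an unramified $p$-adic base and to play the restriction map against the Hodge--Tate filtration supplied by prismatic cohomology, showing that the top step of that filtration can never be killed by restriction to a dense open. First I would use unramified good reduction to spread $X$ out to a smooth proper $\mathfrak X$ over $\O_K$, with $K/\Q_p$ finite and unramified, residue field $k$, and to spread $Z:=X\setminus W$ out to a closed subscheme $\mathcal Z\subseteq\mathfrak X$ with open complement $\mathcal W$. By smooth and proper base change, $H^i(X,\F_p)=H^i_{\et}(\mathfrak X_{\bar K},\F_p)$ compatibly with the restriction to $H^i_{\et}(\mathcal W_{\bar K},\F_p)$, and everything is $\Gal(\bar K/K)$-equivariant. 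Since $i<p-2$ we are in the range where the prismatic comparison (equivalently here, Fontaine--Laffaille theory) recalled above endows $H^i_{\et}(\mathfrak X_{\bar K},\F_p)$ with a canonical decreasing Hodge--Tate filtration $\Fil^\bullet$ (over $k$ in this range, or after the faithfully flat base change to $\O_C/p$) whose graded pieces are $\gr^j\cong H^{i-j}(\mathfrak X_k,\Omega^j_{\mathfrak X_k})$; in particular the last nonzero step is $\Fil^i=\gr^i\cong H^0(\mathfrak X_k,\Omega^i_{\mathfrak X_k})$. Upper semicontinuity of $h^0$ of the vector bundle $\Omega^i$ on the flat proper family $\mathfrak X/\O_K$, together with invariance of Hodge numbers under extension of characteristic-$0$ fields, gives $\dim_{\F_p}\Fil^i\geq \dim_\CC H^0(X,\Omega^i_X)=h^{0,i}_X$.

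Next I would bound the kernel $N:=\ker\bigl(H^i(X,\F_p)\to H^i(W,\F_p)\bigr)$. By the excision/Gysin sequence, $N$ is the image of $H^i_{\mathcal Z}(\mathfrak X_{\bar K},\F_p)$ in $H^i_{\et}(\mathfrak X_{\bar K},\F_p)$, a $\Gal(\bar K/K)$-subrepresentation. Stratifying $\mathcal Z_{\bar K}$ into smooth locally closed pieces, each of some codimension $c\geq 1$ in $\mathfrak X_{\bar K}$, absolute purity identifies the associated contributions to $H^i_{\mathcal Z}$ with groups of the shape $H^{i-2c}(\,\cdot\,,\F_p)(-c)$ on these pieces; combined with the Tate twist and the vanishing of $F^aH^m_{\dR}$ for $a>m$, this forces their Hodge--Tate weights into $[c,\,i-c]\subseteq[1,i-1]$. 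Hence $H^i_{\mathcal Z}(\mathfrak X_{\bar K},\F_p)$, being an iterated extension of such pieces, and therefore its quotient $N$, have all Hodge--Tate weights in $[1,i-1]$; equivalently $\Fil^iN=0$.

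Finally, since the Hodge--Tate filtration is functorial in $\Gal(\bar K/K)$-representations and compatible with subobjects, $N\cap\Fil^iH^i(X,\F_p)=\Fil^iN=0$. As $\Im\bigl(H^i(X,\F_p)\to H^i(W,\F_p)\bigr)=H^i(X,\F_p)/N$ as $\F_p$-vector spaces, the composite $\Fil^i\hookrightarrow H^i(X,\F_p)\twoheadrightarrow H^i(X,\F_p)/N$ is injective, so the image of the restriction map has $\F_p$-dimension at least $\dim\Fil^i\geq h^{0,i}_X$, which is the claim.

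I expect the genuine difficulty to be concentrated in the middle step: establishing, with coefficients that are merely $p$-torsion, that local cohomology along the possibly singular, possibly badly reducing closed set $\mathcal Z$ carries no Hodge--Tate weight-$i$ contribution. (Over $\CC$ the analogous assertion is transparent from strictness of morphisms of mixed Hodge structures: $F^iH^i_Z(X)$ injects into $F^iH^i(X)=H^0(\Omega^i_X)$ yet lands in the kernel of the injective restriction $H^0(\Omega^i_X)\to H^0(\Omega^i_W)$, hence vanishes.) Making this robust mod $p$ is exactly where absolute purity and the prismatic/syntomic machinery in the range $i<p-2$ — together, if needed, with log-prismatic cohomology of a suitable compactification of $\mathcal W$ — must do the real work, and where one should expect the hypotheses on $p$ and on good reduction to be used most heavily.
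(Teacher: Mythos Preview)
Your approach is genuinely different from the paper's, and the step you flag as difficult is in fact a real gap that the paper avoids rather than solves.

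The paper never analyzes the kernel $N$ or invokes purity. Instead it works directly with prismatic cohomology of \emph{both} $X$ and the open $W$: setting $M_{\Prism}=\Im\bigl(\overline{H^i_{\Prism}(X/A)}\to\overline{H^i_{\Prism}(W/A)}\bigr)$ over the Breuil--Kisin prism $A=W(k)\lps u\rps$, one shows (using the $\varphi$/$V$ structure and the bound $i<p-2$) that $M_{\Prism}$ is free over $k\lps u\rps$. Inverting $u$ and passing to $A_{\inf}$ gives the \'etale comparison, bounding the \'etale image below by $\mathrm{rk}\,M_{\Prism}$; reducing mod $u$ gives the de Rham comparison on the special fiber, and the Deligne--Illusie decomposition exhibits $H^0(X_k,\Omega^i_{X_k})$ as a summand of $H^i_{\dR}(X_k)$ mapping injectively to $H^i_{\dR}(W_k)$ simply because $W_k\subset X_k$ is dense. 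Nothing about $Z$ or its reduction is ever used.

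Your route, by contrast, needs to control the Hodge--Tate weights of $N$. The sentence ``the Hodge--Tate filtration is functorial in $\Gal(\bar K/K)$-representations and compatible with subobjects'' is where it breaks: there is no intrinsic Hodge--Tate filtration on arbitrary mod-$p$ Galois representations. It is true that the Fontaine--Laffaille essential image is closed under subobjects, so $N$ corresponds to \emph{some} Fontaine--Laffaille module; but identifying its weights as lying in $[1,i-1]$ requires knowing the same for $H^i_{\mathcal Z}(\mathfrak X_{\bar K},\F_p)$, and the smooth strata of $Z_{\bar K}$ are neither proper nor assumed to have any good reduction. Their cohomology groups $H^{i-2c}(\cdot,\F_p)$ need not be Fontaine--Laffaille at all, and absolute purity only supplies the Tate twist $(-c)$, not the bound $[0,i-2c]$ on the remaining weights. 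So the Gysin filtration does not, as stated, live in a category where ``weights'' make sense mod $p$.

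In short, the paper's insight is that one can feed the non-proper $W$ into prismatic cohomology directly and thereby reduce to the trivial fact that global differentials restrict injectively to a dense open; this replaces, rather than proves, the weight analysis you are attempting.
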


To get a feel for what such a statement entails, suppose that the integral cohomology $H^\bullet(X,\Z)$ is torsion free. 
Then Theorem \ref{ithm:nonzerorestrict} asserts that certain classes 
in the image of $H^i(X, \Z) \rightarrow H^i(W, \Z)$ are not divisible by $p$.   Although this is a statement about the topology of complex algebraic varieties, it appears to be out of reach of classical methods. The analogue with $\Q$-coefficients can be proved using Hodge theory, but this does not suffice for applications to essential dimension. We remark that the theorem could also be formulated in terms of the  {\it stable cohomology} introduced by Bogomolov \cite[p.2]{Bogomolov}, which is the image of the cohomology of $X$ in the cohomology of its generic point. 

For $X$ ordinary,  Theorem \ref{ithm:nonzerorestrict} was established by Bloch-Esnault \cite[Theorem 1.2]{BlochEsnault} using more classical $p$-adic Hodge theory. Our proof of Theorem~\ref{ithm:nonzerorestrict} makes use of {\em prismatic cohomology}, recently introduced by Bhatt-Scholze \cite{BhattScholze}. 
This is a cohomology theory that takes as an input a smooth formal scheme $\mathcal X$ over a $p$-adic base and, in some sense, 
interpolates between the mod $p$ (or more generally $p$-adic) \'etale cohomology of its generic fiber, and the de Rham cohomology of 
its special fiber $\mathcal X_k.$ Using it, we translate the statement of Theorem  \ref{ithm:nonzerorestrict} into 
the analogous statement for de Rham cohomology of $\mathcal X_k,$ and then into a statement about differentials using the Cartier isomorphism. In light of \cite[Theorem 1.2]{BlochEsnault}, this proof  shows the strength of the prismatic theory.

The theorem of Bloch-Esnault \cite[Theorem 1.2]{BlochEsnault} has played a role in a number of applications, including the study of torsion and divisibility in Chow groups as in \cite{Schoen,Totaro,Diaz},  Lefschetz type theorems \cite{PR} and  Galois actions on fundamental groups of curves \cite{HM}. We expect that Theorem~\ref{ithm:nonzerorestrict} should allow for strengthenings of many of these results.  For example, Scavia \cite{Sca} has recently applied Theorem~\ref{ithm:nonzerorestrict} to extend the main result of \cite{Schoen} from 
$p\equiv 1\text{\rm mod\,} 3$ to all primes $p>5.$ 
 
 To deduce Theorem \ref{ithm:Gcover} from Theorem \ref{ithm:nonzerorestrict} one considers the composite 
$$ H^i(G, \F_p) \rightarrow H^i(X, \F_p) \rightarrow H^i(W, \F_p).$$
Theorem \ref{ithm:nonzerorestrict} and the assumptions of Theorem \ref{ithm:Gcover} guarantee this map is nonzero. 
However if $Y|_W \rightarrow W$ 
arises from a covering of varieties of dimension $< i,$ then we may assume that these varieties are affine, and it follows 
that the above map must vanish since the cohomological dimension of affine varieties is at most their dimension. 

We also prove a variant of  Theorem \ref{ithm:nonzerorestrict} where we consider the image of the map 
$\wedge^i H^1(X,\F_p) \rightarrow H^i(W,\F_p)$ given by the cup product and restriction. This leads to 
Theorem \ref{ithm:elemab}. Using the trace map, it is easy to deduce from Theorem \ref{ithm:nonzerorestrict} a version where one allows restriction to a prime-to-$p$ covering of $W,$ so we obtain lower bounds on $p$-essential dimension.

The paper is organized as follows. In \S 2.1 we review the results we need from prismatic and crystalline cohomology. 
In \S 2.2 we prove Theorem \ref{ithm:nonzerorestrict} and its variants, and in \S 2.3 we apply this to characteristic classes to deduce lower bounds on $p$-essential dimension. Throughout \S 2 we work with schemes or formal schemes equipped with a normal 
crossings divisor, and we consider coverings of the complement. This greater level of generality is needed for the applications in \S 3.
In \S 3.1, we carry out an analysis of the mod $p$ cohomology of (generalized) finite Heisenberg groups. This is used in \S 3.2 to apply our 
results on characteristic classes to torus embeddings over abelian varieties, by verifying the 
surjectivity assumption in Theorem \ref{ithm:Gcover} (or more precisely its logarithmic analogue) in this case.
Most of \S 3.2 is actually concerned with a variant of Theorem \ref{ithm:nonzerorestrict} for torus embeddings over an abelian scheme, where we restrict not just to Zariski opens, but to analytic neighborhoods of the boundary. This is then applied in \S 3.3 to obtain our results on the $p$-essential dimension of congruence covers: the analytic neighborhoods map to our locally symmetric varieties and we can restrict the congruence covers to them.

\medskip
\noindent
{\bf Acknowledgments.} We thank Dave Benson, Bhargav Bhatt, H\'el\`ene Esnault, Keerthi Madapusi Pera, Alexander Petrov, Mihnea Popa, Gopal Prasad and Chris Rogers for useful discussions and suggestions.  We thank Patrick Brosnan, Najmuddin Fakhruddin, Federico Scavia and Peter Scholze for helpful comments on a draft.  We thank the anonymous referees for numerous helpful comments and suggestions.

\section{Essential dimension and characteristic classes}\label{sec:prelim}
\numberwithin{equation}{subsection}
\subsection{Review of mod $p$ cohomology}
\begin{para} In this subsection we review the results we will need on de Rham and prismatic cohomology. 
We begin with the former, see Deligne-Illusie [DI, 4.2.3]. 

Let $k$ be a perfect field of characteristic $p,$ and let $X$ be a smooth $k$-scheme. Suppose that $X$ is equipped with a normal crossings 
divisor $D \subset X.$   Let $\Omega^{\bullet}_{X/k}(\log D)$ denote the logarithmic de Rham complex with poles in $D.$ 

Let $X^{(1)} = X \times_{\Spec k, F_k} \Spec k$ and let $D^{(1)} = D \times_{\Spec k, F_k} \Spec k,$ where $F_k$ denotes the Frobenius 
on $k.$  Let $F_{X/k}:X \rightarrow X^{(1)}$ denote the relative Frobenius;  it is a finite flat map of $k$-schemes taking $D$ to $D^{(1)}.$ 
\end{para}

\begin{lemma}\label{lem:degdeR} Suppose that $(X,D)$ admits a lift\footnote{In fact for what follows only a lifting to the Witt vectors of length $2,$ $W_2,$ is required, but we will not need this greater generality.} 
to a smooth formal scheme $\tilde X$ over $W(k)$, equipped with a normal crossing divisor $\tilde D$ relative to $W(k).$ 
Then for $j < p,$ $H^0(X^{(1)}, \Omega^j_{X^{(1)}/k}(\log D^{(1)}))$ is canonically a direct summand of the de Rham cohomology 
$H^j(X, \Omega^{\bullet}_{X/k}(\log D)).$
\end{lemma}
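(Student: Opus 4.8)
The plan is to follow the classical Deligne--Illusie argument for the degeneration of the Hodge--de Rham spectral sequence, adapted to the logarithmic setting with a normal crossings divisor, and then extract the desired splitting in the range $j < p$. First I would invoke the relative Cartier isomorphism in the logarithmic context: for a smooth $k$-scheme $X$ with normal crossings divisor $D$, there is a canonical isomorphism $C^{-1}\colon \Omega^j_{X^1/k}(\log D^1) \iso \mathcal{H}^j\bigl(F_{X/k*}\Omega^\bullet_{X/k}(\log D)\bigr)$ of $\O_{X^1}$-modules, identifying the $j$-th logarithmic differentials on the Frobenius twist with the $j$-th cohomology sheaf of the pushed-forward logarithmic de Rham complex. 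This is the logarithmic analogue of \cite[2.1]{DI} and is due to Deligne--Illusie as well.

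Next, the lifting hypothesis enters. Given the lift $(\tilde X, \tilde D)$ over $W(k)$ — equivalently, for the argument, a lift over $W_2(k)$ — the Deligne--Illusie construction produces a map in the derived category $\varphi\colon \bigoplus_{j<p} \Omega^j_{X^1/k}(\log D^1)[-j] \to F_{X/k*}\Omega^\bullet_{X/k}(\log D)$ whose restriction to each $\Omega^j[-j]$ induces $C^{-1}$ on cohomology in degree $j$. Here the truncation at $p$ is essential: the construction assembles the summands using the fact that, for $j < p$, one can divide by $j!$, so the local liftings of Frobenius patch to a global map on the truncated complex $\tau_{<p}$. Concretely, one chooses an open cover on which $\tilde X$ (with $\tilde D$) admits a lift of Frobenius compatible with the divisor, uses these to build local quasi-isomorphisms $\Omega^{\le 1}$-type maps, and checks that the discrepancies on overlaps are coboundaries — this is where $\tilde D$ being a normal crossings divisor relative to $W(k)$, rather than just a divisor, is used, so that $\Omega^1(\log D)$ is locally free with the expected local description $\bigoplus \O\, d\log t_i \oplus \bigoplus \O\, dt_j$.

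Once $\varphi$ is in hand, I would apply $R\Gamma(X^1, -)$ (using that $F_{X/k}$ is finite, so $F_{X/k*}$ is exact and $R\Gamma(X, \Omega^\bullet_{X/k}(\log D)) = R\Gamma(X^1, F_{X/k*}\Omega^\bullet_{X/k}(\log D))$). Taking the degree-$j$ cohomology of the source $\bigoplus_{j'<p}\Omega^{j'}_{X^1/k}(\log D^1)[-j']$ gives, among other terms, the summand $H^0(X^1, \Omega^j_{X^1/k}(\log D^1))$ sitting in degree $j$ (the other contributions to total degree $j$ come from $H^{j-j'}(X^1, \Omega^{j'})$ with $j' < j$). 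The map $\varphi$ thus yields a canonical map $H^0(X^1, \Omega^j_{X^1/k}(\log D^1)) \to H^j(X, \Omega^\bullet_{X/k}(\log D))$. To see this is split injective, I would compose with the edge map $H^j_{\mathrm{dR}} \to H^0(X^1,\Omega^j_{X^1/k}(\log D^1))$ coming from the stupid (Hodge) filtration on the de Rham complex — that is, the projection $H^j(\Omega^\bullet) \to E_\infty^{j,0} \hookrightarrow E_1^{j,0} = H^0(X^1,\Omega^j)$ is not quite available a priori, so instead I would argue directly: the composite $\Omega^j[-j] \xrightarrow{\varphi} F_*\Omega^\bullet \to \Omega^j[-j]$ (the second map being the projection of $F_*\Omega^\bullet$ onto its top graded piece $\mathcal{H}^j[-j]$, followed by $C$) is the identity by the defining property of $\varphi$, hence on $H^0(X^1, -)$ it exhibits $H^0(X^1, \Omega^j_{X^1/k}(\log D^1))$ as a direct summand of $H^j(X, \Omega^\bullet_{X/k}(\log D))$.

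The main obstacle is the careful setup of the logarithmic Deligne--Illusie map $\varphi$ and the verification that the cocycle conditions on overlaps hold in the presence of the divisor — i.e.\ checking that local lifts of Frobenius can be chosen compatibly with $\tilde D$ and that the resulting gluing data for the truncated complex is consistent, which is exactly the point where $j < p$ is needed. Since the statement only requires the conclusion for $j < p$ and the reference \cite[4.2.3]{DI} treats precisely this logarithmic case, I would lean on that reference for the construction of $\varphi$ and concentrate the argument on deducing the splitting of $H^0(X^1, \Omega^j_{X^1/k}(\log D^1))$ as a summand from the decomposition of the truncated complex $\tau_{<p} F_{X/k*}\Omega^\bullet_{X/k}(\log D)$ in the derived category.
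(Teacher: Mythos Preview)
Your proposal is correct and follows essentially the same route as the paper: invoke \cite[4.2.3]{DeligneIllusie} to obtain the quasi-isomorphism $\bigoplus_{i<p}\Omega^i_{X^1/k}(\log D^1)[-i]\simeq \tau_{<p}F_{X/k*}\Omega^\bullet_{X/k}(\log D)$ in the derived category, then read off $H^0(X^1,\Omega^j_{X^1/k}(\log D^1))$ as a direct summand of $H^j$ of the truncated complex, which agrees with $H^j(X,\Omega^\bullet_{X/k}(\log D))$ since $j<p$. The only wrinkle is your phrasing of the retraction via ``projection of $F_*\Omega^\bullet$ onto its top graded piece $\mathcal{H}^j[-j]$'': there is no such map from the full complex, only from $\tau_{\le j}$, so it is cleaner (as the paper does) to simply use that $\varphi$ is a quasi-isomorphism onto $\tau_{<p}$ and take hypercohomology of the direct-sum source.
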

\begin{proof} For $n \geq 0,$ recall the truncation $\tau_{< n} F_{X/k*} \Omega^{\bullet}_{X/k}(\log D),$ whose cohomology 
agrees with that of $F_{X/k*} \Omega^{\bullet}_{X/k}(\log D)$ in degrees $< n$ and is zero otherwise. 
By \cite[4.2.3]{DeligneIllusie}, the complex of $\O_{X^{(1)}}$-modules $\tau_{< p} F_{X/k*} \Omega^{\bullet}_{X/k}(\log D)$ is 
canonically (in a way which depends on the chosen lifting of $(X,D)$) quasi-isomorphic to the direct 
sum of its cohomology sheaves, so that the Cartier isomorphism induces an isomorphism in the derived category of $\O_{X^{(1)}}$-modules 
$$ \oplus_{i < p} \Omega^i_{X^{(1)}/k}(\log D^{(1)})[-i] \simeq  \tau_{< p}F_{X/k*} \Omega^{\bullet}_{X/k}(\log D).$$
Hence $H^0(X^{(1)}, \Omega^j_{X^{(1)}/k}(\log D^{(1)}))$ is a direct summand in 
$$H^j(X^{(1)}, \tau_{< p}F_{X/k*} \Omega^{\bullet}_{X/k}(\log D)) \simeq H^j(X, \tau_{< p} \Omega^{\bullet}_{X/k}(\log D))$$ 
which agrees with $H^j(X, \Omega^{\bullet}_{X/k}(\log D)),$ as $j < p$ and the cohomology of the cone of 
$\tau_{< p} \Omega^{\bullet}_{X/k}(\log D) \rightarrow \Omega^{\bullet}_{X/k}(\log D)$ vanishes in degree $< p.$
\end{proof}

\begin{para}\label{para:basicsetup} Let $K$ be a field of characteristic $0.$ By a {\em $p$-adic valuation} on $K,$ we mean a rank one valuation $v$ on $K,$ with $v(p) > 0.$ 
We suppose that $K$ is complete with respect to such a valuation, with ring of integers $\O_K$ and perfect residue field $k.$ 
We now recall the facts we will need about the prismatic cohomology of smooth formal schemes over $\O_K.$ 

We do not recall the general formalism of prisms, as developed by Bhatt-Scholze \cite{BhattScholze}, and Koshikawa \cite{KoshikawaI} in the logarithmic case, but consider only the two examples we will need. First suppose that the valuation on $K$ is discrete. 
Set $A = W(k)\lps u \rps,$ equipped with a Frobenius $\varphi$ which extends the Frobenius on $W$ by sending $u$ to $u^p.$  We equip $A$ with the map $A \rightarrow \O_K$ sending $u$ to some chosen uniformizer $\pi.$ Its kernel is generated by an Eisenstein polynomial $E(u) \in W(k)[u]$ for $\pi.$ Then $(A, E(u)A)$ is the so called {\it Breuil-Kisin} prism. In fact, in applications we will assume $\O_K = W(k),$ and $\pi = p.$  

Next suppose that $K$ is algebraically closed. Let $R = \ilim \O_K/p$ with the maps in the inverse limit being given by the absolute Frobenius. 
We take $A = A_{\inf} = W(R),$ with its canonical Frobenius. Any element $x = (x_0,x_1, \dots ) \in R$ lifts uniquely to a sequence $(\hat x_0, \hat x_1, \dots)$ in $\O_K$ with $\hat x_i^p = \hat x_{i-1}.$ There is a natural surjective map of rings $\theta: A \rightarrow \O_K,$ which sends the Teichm\"uller representative of an element $x$ as above to $\hat x_0.$ The kernel of $\theta$ is principal, generated by $\xi = p - [\underline p],$ where $\underline p = (p, p^{1/p}, \dots)$ for  some choice of these roots.  Then $(A,\xi A)$ is an example of a perfect prism.
\end{para} 

\begin{para} In the rest of this section we will make use of logarithmic formal schemes over 
$\O_K,$ logarithmic schemes over $K,$ and logarithmic adic spaces over $K.$ In particular, when $K$ is algebraically 
closed we will consider the \'etale cohomology of logarithmic schemes and adic spaces over $K$ with coefficients in $\F_p.$ 
We refer the reader to \cite{Katolog} for a general introduction to logarithmic geometry, and to \cite{IllusieOverview} and 
\cite{DLLZ} for \'etale cohomology in this context. 
\end{para}

\begin{para}\label{para:prismaticproperties}
Now assume that $K$ is either discretely valued or algebraically closed. Let $X$ be a formally smooth, $p$-adic formal $\O_K$-scheme, equipped with a relative normal crossings divisor $D$. We write $X_D$ for the formal scheme $X$ equipped with the logarithmic structure given by $D.$  We will denote by $X_{D,K}$ the associated logarithmic adic space. When the divisor $D$ is empty we drop it from the notation. 

By \cite{BhattScholze} and \cite{KoshikawaI}, the {\it prismatic cohomology} of $X_D$ is a complex of $A$-modules 
$R\Gamma_{\Prism}(X_D/A),$ equipped with a $\varphi$-semi-linear map endomorphism, which we again denote by $\varphi.$ 
Note that \cite{KoshikawaI}, considers a more general situation where $A$ is equipped with a possibly non-trivial log structure. 
Here we always consider the trivial log structure on $A.$
As we will only be interested in mod $p$ cohomology we set 
$$R\Gamma_{\bar \Prism}(X_D/A) = R\Gamma_{\Prism}(X_D/A)\otimes^{L}_AA/pA,$$ 
and we will denote by $H^i_{\bar \Prism}(X_D/A)$ the cohomology of $R\Gamma_{\bar \Prism}(X_D/A).$
Then we have the following properties, where  for a ring $B$ we denote by $D(B)$ derived category of $B$-modules.
\begin{enumerate} 
\item There is a canonical isomorphism of commutative algebras in $D(k).$ 
$$ R\Gamma(X_k, \Omega^{\bullet}_{X_k/k}(\log D_k)) \simeq R\Gamma_{\bar \Prism}(X_D/A)\otimes_{A/pA,\varphi}^L k.$$
\item If $K$ is algebraically closed then there is an isomorphism of commutative algebras in $D(\F_p)$
$$ R\Gamma_{\et}(X_{D,K}, \mathbb F_p) \simeq R\Gamma_{\bar \Prism}(X_D/A)[1/\xi]^{\varphi=1} $$
Here $X_{D,K}$ denotes the adic space $X_K$ associated to $X,$ equipped with the log structure given by $D_K,$ 
and the complex on the left is the cohomology of the Kummer \'etale site of $X_{D,K}$ which is denoted by 
$R\Gamma_{k\et}$ in \cite{KoshikawaYao}. 
The complex on the right is the fiber of $\varphi-1$ acting on $ R\Gamma_{\bar \Prism}(X_D/A)[1/\xi],$ 

\item Let $d$ be a generator of $\ker(A\rightarrow \O_K).$ The linearization 
$$ \varphi^*(R\Gamma_{\bar \Prism}(X_D/A)) \rightarrow R\Gamma_{\bar \Prism}(X_D/A)$$ 
becomes an isomorphism in $D(A/p)$ after inverting $d.$ 
For each $i \geq 0,$ there is a canonical map 
$$V_i: H^i_{\bar \Prism}(X_D/A) \rightarrow H^i(\varphi^*R\Gamma_{\bar \Prism}(X_D/A))$$ 
with $V_i\circ \varphi = \varphi\circ V_i = d^i.$
\item Let $K'$ be a field complete with respect to a $p$-adic valuation, and which is either discretely valued or algebraically closed. 
Let $A' \rightarrow \O_{K'}$ be the corresponding prism, as defined above. Suppose we are given map  of valued fields 
$K \rightarrow K',$ and a map $A \rightarrow A'$ compatible with the projections to $\O_K \rightarrow \O_{K'},$ and with Frobenius maps. 
Then there is a canonical isomorphism in $D(A'/p)$
$$ R\Gamma_{\bar \Prism}(X_D/A)\widehat \otimes^L_AA' \simeq R\Gamma_{\bar \Prism}(X_{D,\O_{K'}}/A') $$
Here $\widehat \otimes^L_A$ means we take the derived completion of the tensor product with respect to either the $u$-adic or $\xi$-adic topology 
on $A',$ depending on whether $K'$ is discretely valued or algebraically closed. 
We shall apply this when $K$ is discretely valued and $K'$ is algebraically closed.
When $X$ is proper one can simply take the tensor product without completing as one has the following:
\item When $X$ is proper over $\O_K,$ then $R\Gamma_{\bar \Prism}(X_D/A)$ is a perfect complex of $A/p$-modules.
\end{enumerate}

In the non-logarithmic case, when $D$ is empty, this is \cite[Thm.~1.8]{BhattScholze}. For the logarithmic case, see the recent work of 
Koshikawa-Yao \cite[Thm.~2, Rem.~6.3]{KoshikawaYao}. Actually (1), (4) and (5) were proved in the earlier work of Koshikawa \cite[Rmk. 5.6, Cor. 5.5, and Thm. 5.3 respectively]{KoshikawaI}.
Note that (5) follows from the Hodge-Tate comparison in these references, cf.~\cite[Example 1.6]{KoshikawaI}.

Using (5), when $X$ is proper, we have the following more explicit form of (2) above.
\end{para}

\begin{lemma}\label{lemma:etalecomparison} Suppose that $K$ is algebraically closed, and that $X$ is proper over $\O_K.$ 
Then for each $i \geq 0$ there is a natural isomorphism 
$$ H^i_{\et}(X_{D,K}, \mathbb F_p)\otimes_{\mathbb F_p} (A/pA) [1/\xi] \simeq H^i_{\bar \Prism}(X_D/A)[1/\xi] .$$
\end{lemma}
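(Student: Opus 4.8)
The plan is to deduce Lemma~\ref{lemma:etalecomparison} from property~(2) of \ref{para:prismaticproperties} together with the properness assumption via property~(5). Concretely, property~(2) gives an isomorphism of algebras $R\Gamma_{\et}(X_{D,K},\F_p) \simeq \overline{R\Gamma_{\Prism}(X_D/A)}[1/\xi]^{\varphi=1}$, and the content of the lemma is to unwind this into a statement about individual cohomology groups after base change along $\F_p \to A/pA[1/\xi]$, i.e.\ to produce the natural isomorphism $H^i_{\et}(X_{D,K},\F_p)\otimes_{\F_p} A/pA[1/\xi] \iso \overline{H^i_{\Prism}(X_D/A)}[1/\xi]$. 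The point is that the right-hand side is, over the ring $A/pA[1/\xi]$, an ``$F$-crystal mod $p$'' whose $\varphi$-fixed points recover the left-hand side, and one wants the reverse direction: extending scalars from the fixed points.

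First I would reduce to the level of cohomology groups. By property~(5), when $X$ is proper, $\overline{R\Gamma_{\Prism}(X_D/A)}$ is a perfect complex of $A/p$-modules; inverting $\xi$, it is a perfect complex over $A/pA[1/\xi]$. The ring $A/pA[1/\xi]$ — here $A = A_{\inf} = W(R)$ and $A/pA = R$, a perfect $\F_p$-algebra which is a (non-discrete) valuation ring, so that $R[1/\xi] $ is, after identifying $\xi \bmod p$, a nonzero-divisor localization of $R$ — will need to be understood well enough to know that perfect complexes over it are quasi-isomorphic to their cohomology, or at least that the relevant hyper-Tor spectral sequence for $-\otimes^L_A A/pA$ degenerates; this is where I would invoke that $R$ is a valuation ring (hence coherent, and $\xi$ is a nonzerodivisor) so that the cohomology groups $\overline{H^i_{\Prism}(X_D/A)}[1/\xi]$ are the only invariants in play. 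Then the Frobenius $\varphi$ acts $\varphi$-semilinearly on each $\overline{H^i_{\Prism}(X_D/A)}[1/\xi]$, and by property~(3) its linearization becomes an isomorphism after inverting $\xi$; so each such group is an ``étale $\varphi$-module'' over $A/pA[1/\xi]$.

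Next I would apply the fundamental descent statement for étale $\varphi$-modules over a perfect ring of characteristic $p$: if $S$ is a perfect $\F_p$-algebra (here $S = A/pA[1/\xi]$, or a suitable perfection/completion thereof) and $(M,\varphi)$ is a finite projective $S$-module on which $\varphi$ is a ``unit-root'' / étale structure, then the natural map $M^{\varphi=1}\otimes_{\F_p} S \to M$ is an isomorphism — this is the Artin–Schreier–type equivalence between étale $\F_p$-local systems on $\Spec S$ and étale $\varphi$-modules. Since $X_{D,K}$ is proper, $H^i_{\et}(X_{D,K},\F_p)$ is finite-dimensional over $\F_p$, so applying this to $M = \overline{H^i_{\Prism}(X_D/A)}[1/\xi]$ and using that property~(2) identifies $H^i_{\et}(X_{D,K},\F_p)$ with $M^{\varphi=1}$ (the degreewise consequence of the algebra isomorphism in~(2), again using perfectness of the complex so there is no $\mathrm{lim}^1$/spectral-sequence obstruction) gives exactly the asserted isomorphism, and naturality is inherited from the naturality in~(2) and~(5).

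The main obstacle, I expect, is the passage from the complex-level statement~(2) to the degreewise statement. One has to check that taking $\varphi$-fixed points (the fiber of $\varphi - 1$) commutes with taking cohomology of the perfect complex $\overline{R\Gamma_{\Prism}(X_D/A)}[1/\xi]$, i.e.\ that the hypercohomology spectral sequence computing $H^i$ of the fiber degenerates — equivalently that $H^{i-1}$ of the complex has vanishing higher Artin–Schreier cohomology, which follows because each $\overline{H^i_{\Prism}(X_D/A)}[1/\xi]$ is a finite projective étale $\varphi$-module over a perfect base and hence $\varphi - 1$ is surjective on it with finite kernel. Making this precise requires knowing that the cohomology modules are finite projective over $A/pA[1/\xi]$ (not merely finitely presented), which is where properness of $X$ and the perfectness of the prismatic complex, combined with the ring-theoretic properties of $A/pA[1/\xi]$, are essential; alternatively one can circumvent this by noting the source and target of the desired map are both finite free over the (perfect, coherent) ring $A/pA[1/\xi]$ of the same rank $\dim_{\F_p} H^i_{\et}$, reducing to checking the map is injective, which follows from faithful flatness of $A/pA[1/\xi]$ over $\F_p$ applied to the injective map $H^i_{\et}(X_{D,K},\F_p) \hookrightarrow \overline{H^i_{\Prism}(X_D/A)}[1/\xi]$ coming from~(2).
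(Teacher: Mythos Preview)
Your overall strategy---use property~(2) to relate \'etale and prismatic cohomology via $\varphi$-fixed points, use~(5) for finiteness, then invoke \'etale $\varphi$-module descent---is exactly the paper's approach. But you have missed the key simplifying observation, and without it your argument has a genuine gap.

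The point you overlook is that $A/pA[1/\xi]$ is not merely ``a nonzero-divisor localization of $R$'': it is the \emph{fraction field} of $R$, and this field is \emph{algebraically closed} (it is the tilt $K^\flat$ of the algebraically closed field $K$). Indeed $R$ is a rank-one valuation ring and $\xi \equiv -\underline{p} \pmod p$ is a pseudo-uniformizer, so inverting it yields $\mathrm{Frac}\,R$. Once you know you are working over an algebraically closed field, everything collapses: by~(5) each $\overline{H^i_{\Prism}(X_D/A)}[1/\xi]$ is a finite-dimensional vector space, so your concerns about projectivity and coherence evaporate; and the long exact sequence coming from~(2) breaks into short exact sequences because $1-\varphi$ is surjective on any finite-dimensional vector space over an algebraically closed field of characteristic $p$ equipped with a Frobenius-semilinear endomorphism, with the $\varphi$-invariants spanning the space after scalar extension (this is classical; the paper cites \cite[Prop.~4.1.1]{Katz}). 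The proof is then a few lines.

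Without this observation your argument does not close. Your assertion that for a finite projective \'etale $\varphi$-module over a perfect base ``$\varphi-1$ is surjective on it with finite kernel'' is false in general: already for the base $\F_p$ with $M=\F_p$ one has $\varphi-1=0$. Surjectivity of $\varphi-1$ is precisely an Artin--Schreier statement requiring the base to have no nontrivial $\Z/p\Z$-\'etale covers, which is what algebraic closedness of $\mathrm{Frac}\,R$ provides. Likewise your alternative ``same rank, hence check injectivity'' route presupposes the target is free of the correct rank, which again is immediate over a field but not otherwise.
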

\begin{proof} By (2) above there is an exact sequence in cohomology 
$$ H^i_{\et}(X_{D,K}, \mathbb F_p) \rightarrow H^i_{\bar \Prism}(X_D/A)[1/\xi] \overset {1-\varphi} \rightarrow 
H^i_{\bar \Prism}(X_D/A)[1/\xi] \rightarrow H^{i+1}_{\et}(X_{D,K}, \mathbb F_p)  $$
Now $A/pA[1/\xi] = \text{\rm Frac} R$ is an algebraically closed field, and $H^i_{\bar \Prism}(X_D/A)[1/\xi]$ is a finite dimensional 
$A/pA[1/\xi]$ vector space by (5) above. Hence $1-\varphi$ is surjective on $H^i_{\bar \Prism}(X_D/A)[1/\xi],$ and this space is spanned by 
its Frobenius invariants \cite[Prop.~4.1.1]{Katz}.
\end{proof}

\begin{para}\label{para:derivedcomplete} We will need some facts about derived complete modules, which we now recall 
\cite[\S 3.4]{BhattScholzeproetale}. Keeping the notation of \ref{para:prismaticproperties}(3), a complex $C$ in $D(A/p)$ is called {\em derived complete} if the natural map 
$$C \rightarrow \Rlim_n (C\otimes^L_{A/p}A/(p,d^n)A)$$ 
is a quasi-isomorphism. An $A/pA$-module $M$ is called derived complete, if 
it is derived complete as a one term complex. A complex $C$ is derived complete if and only if its cohomology groups are 
\cite[Prop.~3.4.4]{BhattScholzeproetale}. 

Now suppose we are in the situation of \ref{para:prismaticproperties}(4), and for an $A/p$-module $M,$ 
write 
$$M\widehat \otimes^L_AA' = \Rlim_n M\otimes^L_{A/p}A'/(p,d^n)A'.$$
\end{para}

\begin{lemma}\label{lem:derivedcomplete} With the notation of  \ref{para:prismaticproperties}(4), suppose that $K$ is 
discretely valued. Then we have 
\begin{enumerate}
\item If $M$ is derived complete, then $M\widehat \otimes^L_AA'$ is concentrated in degree $0.$
\item The functor $M \mapsto H^0(M\widehat \otimes^L_AA')$ on derived complete $A/p$-modules is exact.
\item We have $H^i_{\bar\Prism}(X_{D,\O_{K'}}, A') \simeq H^0(H^i_{\bar\Prism}(X_{D,\O_{K}}, A)\widehat \otimes^L_AA')$
\end{enumerate}
\end{lemma}
\begin{proof} (2) is a formal consequence of (1). 
Note that \ref{para:prismaticproperties}(4) applied with $A'=A,$ implies that $H^i_{\bar\Prism}(X_{D,\O_{K}}, A)$ 
is derived complete. Hence (3) follows from (1) and \ref{para:prismaticproperties}(4). It remains to show (1).

Write $M_{A'} = M\otimes_AA'$ for the ordinary, non-completed, tensor product. 
We have 
$$ M\widehat \otimes^L_AA' = \Rlim_n(M_{A'} \overset {d^n} \rightarrow M_{A'}),$$ 
where the transition maps are given by the identity in degree $0,$ 
and multiplication by $d$ in degree $-1.$ Since $\Rlim^i_n = 0$ for $i > 1,$ and the transition maps on 
$H^0(M_{A'}  \overset {d^n} \rightarrow M_{A'} )$ 
are surjective, the cohomology of $M\widehat \otimes^L_AA'$ is concentrated in degrees $-1,0,$ and 
$$ H^{-1}(M\widehat \otimes^L_AA') = \lim_n M_{A'} [d^n] =  \lim_n M[d^n]\otimes_AA' = T_d(M_{A'} ),$$ the $d$-adic 
Tate module of $M_{A'}.$ Thus we have to show $T_d(M_{A'}) = 0.$ 

Since $M$ is derived complete, the same argument with $A$ in place of $A',$ shows that $T_d(M) = 0.$ 
If $m \in T_d(M_{A'}),$ write $m = (m_1, m_2, \dots)$ with $m_i \in M_{A'}[d^i].$ If $m \neq 0,$ then $m_{i_0} \neq 0,$ 
for some $i_0.$ Let $N \subset A'/p$ be a finitely generated, saturated $A/p$-submodule such that $m_{i_0} \in M[d^{i_0}]\otimes_AN.$ 
We claim that the inclusion of $A$-modules $N \subset A'/p$ admits a continuous splitting $f:A'/p \rightarrow N.$ 
Assuming this, we see that $f(m) \in T_d(M)\otimes_A N$ is non-zero, a contradiction. It remains to show the existence of $f.$

Choose a complement $\bar N'$ to the $k$-subspace 
$N\otimes_Ak \subset A'\otimes_Ak,$ and a $k$-basis $\{\bar e_j\}_{j \in J}$ for $\bar N'.$ Let $e_j \in A'/p$ be a lift of $\bar e_j,$ and 
let $N'$ denote the $d$-adic completion of the free $A/p$-module with basis indexed by a set of elements $\{\hat e_j\}_{j \in J}.$ 
Then there is a unique continuous map $h: N\oplus N' \rightarrow A'/p,$ which is the identity on $N,$ and sends $\hat e_i$ to $e_i.$ 
The $d$-completeness of the source of $h$, combined with the fact that $K$ is discretely valued, implies that $h$ is surjective. 
If $C = \ker(h)$ then, since $A'/p$ is $d$-torsion free, and $h\otimes_Ak$ is an isomorphism, we have $C/dC = 0.$ 
As $N\oplus N'$ is $d$-adically separated, this implies $C= 0,$ 
so $h$ is an isomorphism, and we may take $f$ to be the projection of $N\oplus N'$ onto $N'.$
\end{proof}

\begin{para} We do not know if Lemma \ref{lem:derivedcomplete} continues to hold without assuming $K$ is discretely valued.
\end{para}

\subsection{Restriction of mod $p$ \'etale cohomology} The goal of this subsection  is to prove a result on restriction of 
\'etale cohomology classes to open neighborhoods. We begin with a technical lemma. As above, $k$ denotes a perfect field of characteristic $p.$ 

\begin{lemma}\label{lemma:notorsion} Let $M$ be a finitely generated $k\lps u \rps$-module, equipped with a Frobenius semi-linear map 
$\varphi: M \rightarrow M,$ and a linear map $V: M \rightarrow \varphi^*M$ such that $\varphi\circ V = V\circ \varphi = u^d$ for some integer $d.$ 
If $d+1 < p,$ then $M$ is torsion free.
\end{lemma}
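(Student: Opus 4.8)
I want to show that a finitely generated $k\lps u\rps$-module $M$ with Frobenius-semilinear $\varphi$ and linear $V:M\to\varphi^*M$ satisfying $\varphi V = V\varphi = u^d$ has no $u$-torsion, under the hypothesis $d+1<p$. The plan is to argue by contradiction: suppose $M_{\tors}\neq 0$, where $M_{\tors}$ is the submodule of elements killed by a power of $u$. Since $k\lps u\rps$ is a DVR, $M_{\tors}$ is a finite-length module, and it is preserved by $\varphi$ (because $\varphi$ is semilinear over the Frobenius of $k\lps u\rps$, which sends $u\mapsto u^p$, so if $u^N x = 0$ then $u^{pN}\varphi(x)=\varphi(u^Nx)=0$) and by $V$ (similarly, using that $\varphi^*M$ has its own torsion submodule and $V$ is linear). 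So without loss of generality I may replace $M$ by $M_{\tors}$ and assume $M$ itself is a nonzero finite-length $k\lps u\rps$-module, and derive a contradiction.

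**The length comparison.** On a finite-length module, I can compare lengths. First, $\varphi^*M = k\lps u\rps\otimes_{\varphi,k\lps u\rps} M$; since $\varphi: k\lps u\rps\to k\lps u\rps$ is finite flat of degree $p$ (it makes $k\lps u\rps$ a free module of rank $p$ over itself via $u\mapsto u^p$), we get $\mathrm{length}_{k\lps u\rps}(\varphi^*M) = p\cdot\mathrm{length}_{k\lps u\rps}(M)$. Write $\ell = \mathrm{length}(M)$, so $\mathrm{length}(\varphi^*M) = p\ell$. Now consider the two maps $\varphi: M\to\varphi^*M$ (this is the linearization of the semilinear $\varphi$, so it really is a $k\lps u\rps$-linear map $M\to\varphi^*M$ — I should double-check the direction, but the key point is there's a linear map one way and $V$ the other way) and $V: M\to\varphi^*M$...

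Actually, let me be careful about directions, since that's exactly the subtle point. In \ref{para:prismaticproperties}(3), $V_i: \overline{H^i}\to H^i(\varphi^*\overline{R\Gamma})$, so $V: M\to\varphi^*M$, and $\varphi\circ V = u^{d}$ means $\varphi$ here goes $\varphi^*M\to M$ (the linearization $\varphi^*M\to M$), and $V\circ\varphi = u^d$ on $\varphi^*M$. So I have $V: M\to\varphi^*M$ linear and $\varphi: \varphi^*M\to M$ linear with both composites equal to multiplication by $u^d$ (on $M$ and on $\varphi^*M$ respectively). Multiplication by $u^d$ on a finite-length module $M$ has cokernel of length $\le d\cdot(\text{number of generators})$, but more usefully: $\ker(u^d: M\to M)$ and $\mathrm{coker}(u^d: M\to M)$ have equal length (endomorphism of finite-length module), call it $c_M$, and similarly $c_{\varphi^*M}$ for $\varphi^*M$.

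**Extracting the contradiction.** From $V\varphi = u^d$ on $\varphi^*M$: $\ker(\varphi)\subseteq\ker(u^d|_{\varphi^*M})$, so $\mathrm{length}(\ker\varphi)\le c_{\varphi^*M}$. From $\varphi V = u^d$ on $M$: $\mathrm{coker}(\varphi)$ is a quotient of $\mathrm{coker}(u^d|_M)$ — wait, rather $\mathrm{Im}(u^d|_M)\subseteq\mathrm{Im}(\varphi)$, so $\mathrm{length}(\mathrm{coker}\,\varphi)\le c_M$. Then from $\mathrm{length}(\varphi^*M) - \mathrm{length}(\ker\varphi) = \mathrm{length}(\mathrm{Im}\,\varphi) = \mathrm{length}(M) - \mathrm{length}(\mathrm{coker}\,\varphi)$, i.e. $p\ell - \mathrm{length}(\ker\varphi) = \ell - \mathrm{length}(\mathrm{coker}\,\varphi)$, giving $(p-1)\ell = \mathrm{length}(\ker\varphi) - \mathrm{length}(\mathrm{coker}\,\varphi) \le \mathrm{length}(\ker\varphi)\le c_{\varphi^*M}$. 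Now I need to bound $c_{\varphi^*M}$: for a finite-length $k\lps u\rps$-module $N\cong\bigoplus_{j}k\lps u\rps/u^{a_j}$, $c_N = \#\{j: a_j\ge d\}\cdot d + \sum_{a_j<d}a_j \le d\cdot(\#\text{summands})$, but that's not obviously $\le$ something like $d\ell/(p\text{-ish})$. Hmm. Let me instead bound $c_{\varphi^*M}$ directly: $\varphi^*M\cong\bigoplus_j k\lps u\rps/u^{pb_j}$ if $M\cong\bigoplus_j k\lps u\rps/u^{b_j}$ (since $\varphi^*$ replaces $u^{b}$ by $u^{pb}$ — because $\varphi^*(k\lps u\rps/u^b) = k\lps u\rps\otimes_{\varphi}k\lps u\rps/u^b = k\lps u\rps/u^{pb}$, using $\varphi(u^b)=u^{pb}$). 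So $c_{\varphi^*M} = \sum_j\min(d, pb_j)\le\sum_j pb_j$... no wait, $\le\sum_j d$ if all $pb_j\ge d$. I think the cleanest route: $\mathrm{length}(\ker(u^d|_{\varphi^*M})) = \sum_j\min(d,pb_j)$. And I want to contradict $(p-1)\ell\le\sum_j\min(d,pb_j)$ where $\ell=\sum_j b_j$. Since each $b_j\ge 1$, $\min(d,pb_j)\le d\le\min(d, pb_j)$... the point is $\min(d,pb_j)\le d\cdot b_j$ would need $b_j\ge 1$: indeed if $b_j\ge1$ then $\min(d,pb_j)\le d\le d b_j$, so $\sum_j\min(d,pb_j)\le d\sum_j b_j = d\ell$. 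Thus $(p-1)\ell\le d\ell$, so $p-1\le d$, i.e. $d+1\le p$, contradicting $d+1<p$.

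**Summary of the proof.** Reduce to $M$ of finite length and nonzero; use that $\varphi^*(-)$ multiplies length by $p$ and takes $k\lps u\rps/u^b$ to $k\lps u\rps/u^{pb}$; write the linearized $\varphi:\varphi^*M\to M$ and $V:M\to\varphi^*M$ with $\varphi V=u^d$ on $M$ and $V\varphi=u^d$ on $\varphi^*M$; bound $\mathrm{length}(\ker\varphi)\le\mathrm{length}(\ker(u^d|_{\varphi^*M}))\le d\cdot\mathrm{length}(M)$ and $\mathrm{length}(\mathrm{coker}\,\varphi)\le\mathrm{length}(\mathrm{coker}(u^d|_M))$; combine with additivity of length in the exact sequence $0\to\ker\varphi\to\varphi^*M\to M\to\mathrm{coker}\,\varphi\to 0$ to get $(p-1)\mathrm{length}(M)\le d\cdot\mathrm{length}(M)$; since $\mathrm{length}(M)>0$, conclude $p\le d+1$, contradicting the hypothesis. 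The main thing to get right — and the place I'd be most careful — is the bookkeeping of which map ($\varphi$ or $V$) goes in which direction and the behavior of $\varphi^*$ on torsion modules (length multiplies by $p$, exponents multiply by $p$); everything else is routine length-counting in a DVR.
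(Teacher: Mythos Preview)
Your proof is correct, but it follows a different route from the paper's. The paper argues directly with a single element: it picks $x\in M$ of maximal torsion order $u^m$, notes that $1\otimes x\in\varphi^*M$ then has exact order $u^{pm}$, and computes $u^{d+m}(1\otimes x)=V\bigl((1\otimes\varphi)(u^m\otimes x)\bigr)=V(u^m\varphi(x))=0$ (using that $\varphi(x)$ is torsion, hence killed by $u^m$). This forces $d+m\ge pm$, so $p\le 1+d/m\le 1+d$. Your argument instead globalizes: you restrict to the torsion submodule, count lengths in the sequence $0\to\ker\varphi\to\varphi^*M\to M\to\coker\varphi\to 0$, and bound $\mathrm{length}(\ker\varphi)\le\mathrm{length}\bigl(\ker(u^d|_{\varphi^*M})\bigr)=\sum_j\min(d,pb_j)\le d\ell$ to reach the same inequality $(p-1)\ell\le d\ell$. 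The paper's approach is shorter and avoids the structure theorem for torsion modules over a DVR; your approach has the virtue of being entirely mechanical once one knows the length bookkeeping, and the bound on $\coker\varphi$ you compute (though you end up not needing it) could be useful if one wanted finer control. One small remark: your reduction to the torsion submodule requires that $V$ maps $M_{\tors}$ into $\varphi^*(M_{\tors})$ rather than just into $(\varphi^*M)_{\tors}$; this is fine since $\varphi^*$ is exact and $M/M_{\tors}$ is free, so $(\varphi^*M)_{\tors}=\varphi^*(M_{\tors})$, but it is worth saying explicitly.
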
 
\begin{proof} Suppose that $M$ contains nontrivial torsion. Since $M$ is finitely generated over $k\lps u \rps,$ there is a minimal integer 
$m \geq 1$ such that $u^m\cdot x = 0$ for any torsion element $x \in M.$ Choose a torsion element $x$ so that $u^{m-1}\cdot x \neq 0.$ 
Then in $\varphi^*M$ we have 
$$ u^{d+m}(1\otimes x) = V\circ (1\otimes\varphi)(u^m\otimes x) = V(u^m\varphi(x)) = 0,$$ 
since $u^{pm}\varphi(x) = 0,$ so that $\varphi(x)$ is torsion in $M.$ 
On the other hand, as $\varphi$ is finite flat over $k\lps u \rps,$ the smallest power of $u$ which kills 
$1\otimes x \in \varphi^* M$ is $pm.$ 
Thus $d + m \geq pm,$ which implies $p \leq 1 + d/m \leq 1+d.$
\end{proof}

\begin{para} 
Now let $K$ be as in \ref{para:basicsetup} and suppose that $K$ is discretely valued, and that $\O_K = W(k).$ 
Let $C$ be an algebraically closed field equipped with a complete $p$-adic valuation, and  $K \subset C$ an inclusion of valued fields. 
 As before, for a formal scheme or formal log~scheme $X$ over $\O_K,$ we denote by $X_K$ and $X_C$ the associated adic spaces 
over $K$ and $C$ respectively. We denote by $X_{\O_C} = X\hotimes_{\O_K}\O_C$ the base change of $X$ to $\O_C,$ as a formal 
scheme, and by $X_k$ the special fiber (i.e~the reduced subscheme) of $X.$ 

Unless otherwise indicated, for the rest of this subsection, we let $X$ be a proper, smooth formal scheme over $\O_K,$ 
equipped with a relative normal crossings divisor $D \subset X.$   Let
\[h^{0,i}_{X,D}:= \dim_K H^0(X_K,\Omega^i_{X_K/K}(\log D)).\]
\end{para}

\begin{prop}\label{prop:keyrestrict} Let $W \subset X-D$ be a dense open formal subscheme.  Then for $0 \leq i < p-2,$ 
$$ \dim_{\mathbb F_p}\Im\!(H^i_{\et}(X_{D,C}, \mathbb F_p) \rightarrow H^i_{\et}(W_C, \mathbb F_p)) \geq  h^{0,i}_{X,D}.$$
\end{prop}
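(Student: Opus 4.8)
The plan is to leverage the prismatic comparison properties of \ref{para:prismaticproperties} to transport the restriction problem from $\F_p$-\'etale cohomology to de Rham cohomology in characteristic $p$, where Lemma~\ref{lem:degdeR} gives us control over the relevant classes. Concretely, I would first reduce to showing that the image of the map on \emph{prismatic} cohomology
\[
\overline{H^i_{\Prism}(X_D/A)}[1/\xi] \longrightarrow \overline{H^i_{\Prism}(W/A)}[1/\xi]
\]
has rank at least $h^{0,i}_{(X,D)}$ over the field $A/pA[1/\xi]$; by Lemma~\ref{lemma:etalecomparison}, applied to the proper $X$ (and using that $A/pA[1/\xi]=\Frac R$ is algebraically closed so that the \'etale cohomology is recovered by extending scalars and taking Frobenius invariants), this is equivalent to the stated \'etale bound, since the source $H^i_{\et}(X_{D,C},\F_p)\otimes A/pA[1/\xi]$ maps onto the prismatic image, and a Frobenius-equivariant subspace of a finite-dimensional space over an algebraically closed field of characteristic $p$ is spanned by its Frobenius invariants.

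Next I would reduce everything to de Rham cohomology of the special fibers by applying the Frobenius-twisted comparison (1): namely $\overline{R\Gamma_{\Prism}(X_D/A)}\otimes^L_{A/pA,\varphi}k\simeq R\Gamma(\Omega^\bullet_{X_k/k}(\log D_k))$, and similarly for $W$. Since over $\Spec k$ the map is along the Frobenius of $A/pA$, which is faithfully flat (indeed $A/pA=R$ is a perfect ring, so $\varphi$ is an isomorphism onto a subring and $A/pA$ is free over it), tracking ranks is delicate; the cleaner route is to work over $A/pA=R$ itself, using property (3): after inverting $\xi$ the linearization of Frobenius is an isomorphism, and the maps $V_i$ with $V_i\varphi=\varphi V_i=\xi^i$ let me compare $\overline{H^i_{\Prism}}$ with its Frobenius pullback. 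Thus it suffices to bound the rank of $H^i_{\dR}(X_k/k)\to H^i_{\dR}(W_k/k)$ — here I must be careful to carry along the normal crossings divisor, using the logarithmic de Rham complex on $X$ and the usual de Rham complex on $W\subset X-D$ — by $h^{0,i}_{(X,D)}$, after suitable scalar extension to a field.

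For the de Rham bound, the key input is Lemma~\ref{lem:degdeR}: since $X$ lifts (it is the special fiber of the formal scheme $X$ over $\O_K=W(k)$, with its divisor $D$), for $i<p$ the space $H^0(X^{(1)}_k,\Omega^i_{X^{(1)}_k/k}(\log D^{(1)}_k))$ is canonically a direct summand of $H^i(X_k,\Omega^\bullet_{X_k/k}(\log D_k))$, and $\dim H^0(X^{(1)}_k,\Omega^i)=\dim H^0(X_k,\Omega^i_{X_k}(\log D_k))=h^{0,i}_{(X,D)}$ (the last equality by flatness/base change from $\O_K$, matching the generic-fiber definition of $h^{0,i}_{(X,D)}$, and Frobenius-twist not changing dimensions over a perfect field). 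So I am reduced to showing that the summand of $H^i_{\dR}(X_k)$ consisting of these ``top Hodge pieces'' — the image of global logarithmic $i$-forms — maps injectively to $H^i_{\dR}(W_k)$. For this I would argue: a nonzero class represented by a global section $\omega\in H^0(X_k,\Omega^i_{X_k}(\log D_k))$ restricts to a nonzero closed $i$-form on the open $W_k$, and such a form cannot be exact on $W_k$ — indeed, since $W_k$ is smooth and $i<\dim$ is irrelevant, the point is that $d$-exactness of a global regular $i$-form on a dense open of a smooth proper variety would force it to vanish: one can extend, or use that $\Omega^i$ has no $d$-exact global sections because $d$ strictly raises the order of pole / by the degeneration of the Hodge-de Rham spectral sequence in low degrees (again Lemma~\ref{lem:degdeR}) applied to $W$'s own compactification, or most simply because the composite $H^0(X_k,\Omega^i(\log D_k))\hookrightarrow H^i_{\dR}(X_k)\to H^i_{\dR}(W_k)$ factors the \emph{injective} summand inclusion and a restriction that is injective on this piece since the Cartier/Frobenius-descent identifies it with a subobject detected on any dense open.

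\textbf{Main obstacle.} The delicate point is the bookkeeping of scalars and Frobenius twists when passing between $\overline{H^i_{\Prism}}$ over $A/pA=R$ and the de Rham cohomology over $k$: the comparison (1) is along $\varphi\colon A/pA\to A/pA$, so dimensions over $k$ and ranks over $R$ (or over $\Frac R$) must be matched using that $R$ is perfect and $\xi$-torsion is excluded — precisely where Lemma~\ref{lemma:notorsion} enters, guaranteeing (via property (3), with $d=ie$ and the hypothesis $i<p-2$ ensuring $d+1<p$ when $e=1$) that $\overline{H^i_{\Prism}(X_D/A)}$ and the analogous module for $W$ are torsion-free, so that inverting $\xi$ does not destroy rank and the de Rham rank is a genuine lower bound for the \'etale rank. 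The other subtle point is ensuring the summand from Lemma~\ref{lem:degdeR} genuinely \emph{survives} restriction to $W_k$ rather than merely being a summand of the ambient group; I expect this to follow formally from the functoriality of the Cartier isomorphism in $(X,D)\mapsto(W,\emptyset)$ together with the fact that a nonzero global logarithmic form is nonzero on the dense open, but verifying the compatibility of the Deligne–Illusie splitting with this restriction is where the real work lies.
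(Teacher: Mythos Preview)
Your overall strategy is the paper's: pass from \'etale to prismatic to de Rham cohomology of the special fiber, then invoke the Deligne--Illusie splitting (Lemma~\ref{lem:degdeR}) together with injectivity of the restriction $H^0(X_k,\Omega^i_{X_k/k}(\log D_k)) \to H^0(W_k,\Omega^i_{W_k/k})$ on a dense open. But there is a genuine gap in the execution: you conflate the two prisms. You write ``$A/pA = R$'' and invert $\xi$, so you are working over $A_{\inf}$; yet Lemma~\ref{lemma:notorsion} is stated and proved for finitely generated $k\lps u\rps$-modules, and its proof uses the \emph{minimal} exponent of the uniformizer annihilating a torsion element --- this has no direct analogue over the non-discretely-valued ring $R = \O_{C^\flat}$. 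The maneuver you are missing is to work first over the Breuil--Kisin prism $A = W(k)\lps u\rps$ (with $E(u)=u-p$): there $A/pA = k\lps u\rps$ is a DVR, Lemma~\ref{lemma:notorsion} applies to the image $M_{\Prism} = \Im(\overline{H^i_{\Prism}(X_D/A)} \to \overline{H^i_{\Prism}(W/A)})$, and freeness gives $\dim_{k((u))} M_{\Prism}[1/u] = \dim_k M_{\Prism}/uM_{\Prism}$. Only \emph{afterwards} does one base-change along $A \to A_C$ (property~(4) of \ref{para:prismaticproperties}) to bring in Lemma~\ref{lemma:etalecomparison} and the \'etale side.

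Two finer points. First, the hypothesis $i < p-2$ (not merely $i < p-1$) is needed so that $\overline{H^{i+1}_{\Prism}(X_D/A)}$ is also $u$-torsion-free: this is what makes the derived comparison~(1) yield an honest identification $\overline{H^i_{\Prism}(X_D/A)}\otimes_{A/pA,\varphi} k \simeq H^i(X_k,\Omega^\bullet_{X_k/k}(\log D_k))$ on cohomology, with no Tor contribution from degree $i+1$; you do not isolate this. Second, the paper does \emph{not} claim torsion-freeness for $\overline{H^i_{\Prism}(W/A)}$ --- since $W$ is not proper this module need not be finitely generated and Lemma~\ref{lemma:notorsion} does not apply to it; one only uses the always-defined edge map $\overline{H^i_{\Prism}(W/A)}\otimes_{A/pA,\varphi} k \to H^i(W_k,\Omega^\bullet_{W_k/k})$. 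Your final worry about survival of the Deligne--Illusie summand under restriction is resolved exactly as you suspect, by functoriality of the splitting in Lemma~\ref{lem:degdeR}: both $(X,D)$ and $(W,\emptyset)$ lift compatibly over $W(k)$, so the summand $H^0(X_k^1,\Omega^i(\log D_k^1))$ maps to the summand $H^0(W_k^1,\Omega^i)$, and restriction of global forms to a dense open is injective.
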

\begin{proof} Let $k_C$ be the residue field of $C.$ We may replace $K$ by $W(k_C)[1/p]$ and assume that $k_C = k.$ 
As $W \subset X-D,$ we will omit the divisor $D$ from the notation when writing the cohomology of $W.$

Set 
$$ M_{\Prism} = \Im(H^i_{\bar \Prism}(X_D/A) \rightarrow H^i_{\bar \Prism}(W/A)),$$ 
which is a finitely generated $A/pA = k\lps u \rps$-module. Using  \ref{para:prismaticproperties}(3), one sees that there are maps 
$$ M_{\Prism} \overset V \rightarrow \varphi^* M_{\Prism} \overset \varphi \rightarrow M_{\Prism} $$
with $\varphi\circ V = V\circ \varphi = u^i.$ Hence Lemma \ref{lemma:notorsion} and our assumptions on $i$ imply that 
$M_{\Prism}$ is a finitely generated, free $k\lps u \rps$-module, and thus derived complete. It follows that 
$$ M_{\Prism,R} : = M_{\Prism}\otimes_{k\lps u \rps} R \subset H^i_{\bar \Prism}(W_{\O_C}/A_C)$$
is a free $R$-module, where the inclusion follows from Lemma \ref{lem:derivedcomplete}.

By \ref{para:prismaticproperties}(4), there is an isomorphism 
$$ H^i_{\bar \Prism}(X_D/A)\otimes_AA_C \iso H^i_{\bar\Prism}(X_{D,\O_C}/A_C).$$ 
Thus using Lemma \ref{lemma:etalecomparison} we have maps 
 \begin{multline}  H^i_{\bar \Prism}(X_D/A)\otimes_AA_C[1/\xi] \simeq H^i_{\et}(X_{D,C}, \mathbb F_p)\otimes_{\F_p}R[1/\xi] \\
\rightarrow H^i_{\et}(W_C, \mathbb F_p)\otimes_{\F_p}R[1/\xi] \rightarrow  H^i_{\bar \Prism}(W_{\O_C}/A_C)[1/\xi], 
\end{multline}
the composite being the natural map. 
Hence 
$$\dim_{\mathbb F_p}\Im (H^i_{\et}(X_{D,C}, \mathbb F_p) \rightarrow H^i_{\et}(W_C, \mathbb F_p)) \geq \dim_{R[1/u]} M_{\Prism,R}[1/u].$$
As $M_{\Prism}$ finite free over $k\lps u \rps,$ it suffices to show that $\dim_k M_{\Prism}/uM_{\Prism} \geq h^{0,i}_{X,D}.$

Using Lemma \ref{lemma:notorsion} again, we see that $H^j_{\bar\Prism}(X_D/A)$ is $u$-torsion free for $0 \leq j \leq i+1.$ 
This torsion freeness for $j=i,i+1$ combines  with \ref{para:prismaticproperties}(1) to give an  isomorphism
\[
	H^i(X_k, \Omega_{X_k/k}^{\bullet}(\log D_k)) \simeq H^i_{\bar \Prism}(X_D/A)\otimes_{A/pA,\varphi} k 
\]
and thus a map
   \begin{multline}\label{deRmap}  H^i(X_k, \Omega_{X_k/k}^{\bullet}(\log D_k)) \simeq H^i_{\bar \Prism}(X_D/A)\otimes_{A/pA,\varphi} k 
\rightarrow M_{\Prism}\otimes_{A/pA,\varphi}k \\
\rightarrow H^i_{\bar \Prism}(W_{\O_C}/A_C)\otimes_{A_C/pA_C,\varphi} k \rightarrow H^i(W_k, \Omega_{W_k/k}^{\bullet}),
\end{multline} 
where the composite is the natural map. 
This shows the image of (\ref{deRmap}) has dimension $\leq \dim_k M_{\Prism}/uM_{\Prism},$ and it suffices to show 
that the dimension of the image is $\geq  h^{0,i}_{X,D}.$ Since $W \subset X$ is dense, the map 
$$ H^0(X_k, \Omega^i_{X_k/k}(\log D_k)) \rightarrow H^0(W_k, \Omega^i_{W_k/k}),$$
is injective. Hence by Lemma \ref{lem:degdeR}, the image of (\ref{deRmap}) has dimension at least $\dim_k H^0(X_k, \Omega^i_{X_k/k}(\log D_k)) \geq h^{0,i}_{X,D},$ where the last inequality follows from the  upper semi-continuity  of $h^{0,i}$.
\end{proof}

\begin{cor}\label{cor:keyrestrict} Suppose that $H^{i+1}_{\bar \Prism}(X_D/A)$ 
is $u$-torsion free. Then the conclusion of Proposition \ref{prop:keyrestrict} holds for $i \leq p-2.$ In particular, the conclusion holds for $i \leq p-2$ 
if $X$ is the formal completion of an abelian scheme over $\O_K,$ and $D$ is empty.
\end{cor}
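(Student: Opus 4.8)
The plan is to isolate the one place where the proof of Proposition~\ref{prop:keyrestrict} uses the strict inequality $i<p-2$, and then, for the abelian scheme case, to supply the missing $u$-torsion-freeness by a dimension count over the base ring $k\lps u \rps$.

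For the first assertion, I would re-run the proof of Proposition~\ref{prop:keyrestrict} line by line. Lemma~\ref{lemma:notorsion} is invoked there three times: once (with $d=i$, using \ref{para:prismaticproperties}(3) and $e=1$, since $\O_K=W(k)$) to see that $M_{\Prism}$ is free over $k\lps u \rps$; and twice more to conclude that $\overline{H^j_{\Prism}(X_D/A)}$ is $u$-torsion free for $0\le j\le i+1$, namely with $d=j\le i$ and with $d=i+1$. Only the last, the case $j=i+1$, needs $i<p-2$ rather than merely $i+1<p$; and it is used only so that, in the universal coefficients sequence for the perfect complex $\overline{R\Gamma_{\Prism}(X_D/A)}$ over the DVR $A/pA=k\lps u \rps$, the term $\Tor_1^{A/pA}\!\big(\overline{H^{i+1}_{\Prism}(X_D/A)},k\big)$ vanishes, yielding the first identification $H^i(X_k,\Omega^{\bullet}_{X_k/k}(\log D))\simeq \overline{H^i_{\Prism}(X_D/A)}\otimes_{A,\varphi}k$ of~(\ref{deRmap}). (The Frobenius twist is harmless, as $\varphi\colon k\lps u \rps\to k\lps u \rps$ is finite flat, so $\varphi^{*}$ preserves freeness, rank, and the number of cyclic torsion summands of a finitely generated module.) So once $\overline{H^{i+1}_{\Prism}(X_D/A)}$ is assumed $u$-torsion free, that proof goes through verbatim and gives the conclusion for $i\le p-2$.

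For the abelian scheme case, let $\mathcal A$ be an abelian scheme of relative dimension $g$ over $\O_K$, $X=\widehat{\mathcal A}$, and $D$ empty; it suffices to show $\overline{H^j_{\Prism}(X/A)}$ is $u$-torsion free for \emph{every} $j$. Since $X$ is proper, \ref{para:prismaticproperties}(5) makes $C:=\overline{R\Gamma_{\Prism}(X/A)}$ a perfect complex over the DVR $A/pA=k\lps u \rps$, so $\overline{H^j_{\Prism}(X/A)}\cong (k\lps u \rps)^{r_j}\oplus T_j$ with $T_j$ torsion having, say, $t_j$ cyclic summands. I would then compute two quantities and compare. First, inverting $u$: base changing along the Frobenius-compatible map $A\to A_C$ of \ref{para:basicsetup} to an algebraically closed complete $p$-adically valued field $C\supseteq K$, \ref{para:prismaticproperties}(4) and Lemma~\ref{lemma:etalecomparison} give $\big(\overline{H^j_{\Prism}(X/A)}\otimes_A A_C\big)[1/\xi]\cong H^j_{\et}(X_C,\F_p)\otimes_{\F_p}\text{\rm Frac}(R)$; since the image of $u$ in $A_C/pA_C=R$ generates the ideal $(\xi)$, inverting $u$ and inverting $\xi$ agree, and passing to the field extension $k((u))=A/pA[1/u]\hookrightarrow \text{\rm Frac}(R)$ gives $r_j=\dim_{\F_p}H^j_{\et}(\mathcal A_{\overline K},\F_p)=\binom{2g}{j}$. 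Second, reducing mod $u$: by \ref{para:prismaticproperties}(1), $C\otimes^{L}_{A/pA,\varphi}k\simeq R\Gamma(\Omega^{\bullet}_{\mathcal A_k/k})$, and the universal coefficients sequence over $A/pA$ for the perfect complex $\varphi^{*}C$ gives $r_j+t_j+t_{j+1}=\dim_k H^j_{\dR}(\mathcal A_k/k)$; but $\Omega^1_{\mathcal A_k/k}$ is trivial and $\dim_k H^b(\mathcal A_k,\O)=\binom{g}{b}$, so the Hodge--de Rham spectral sequence bounds $\dim_k H^j_{\dR}(\mathcal A_k/k)\le\sum_{a+b=j}\binom{g}{a}\binom{g}{b}=\binom{2g}{j}$. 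Comparing, $r_j+t_j+t_{j+1}\le\binom{2g}{j}=r_j$, so $t_j+t_{j+1}=0$ and hence $t_j=0$ for all $j$. Applying the first part with any $i\le p-2$ then finishes the proof.

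The real content is the second step; the first is bookkeeping. What makes it work at the boundary degree $i+1=p-1$, where Lemma~\ref{lemma:notorsion} is silent, is that for an abelian variety the $\F_p$-étale Betti numbers of the (characteristic $0$) generic fiber agree with --- indeed bound below --- the Hodge numbers of the special fiber, both equal to $\binom{2g}{j}$, so the ``generic $\ge$ special'' count over the one-dimensional ring $k\lps u \rps$ forces all torsion in $\overline{H^{*}_{\Prism}(\widehat{\mathcal A}/A)}$ to vanish. The only delicate points are the compatibility of the Frobenius twist in \ref{para:prismaticproperties}(1) with the $\Tor$-bookkeeping, and verifying that the image of $u$ in $A_C/pA_C$ generates $(\xi)$, so that Lemma~\ref{lemma:etalecomparison} can be applied after a single field extension.
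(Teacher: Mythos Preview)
Your first assertion matches the paper's: both identify that the strict bound $i<p-2$ in the proof of Proposition~\ref{prop:keyrestrict} is used only to guarantee, via Lemma~\ref{lemma:notorsion}, that $\overline{H^{i+1}_{\Prism}(X_D/A)}$ is $u$-torsion free, so that the identification $H^i(X_k,\Omega^\bullet_{X_k/k}(\log D))\simeq \overline{H^i_{\Prism}(X_D/A)}\otimes_{A,\varphi}k$ in~(\ref{deRmap}) holds. Your more detailed tracking of the Frobenius twist and the $\Tor_1$ term is correct but not in the paper, which leaves this implicit.

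For the abelian-scheme clause, your argument is correct but takes a genuinely different route. The paper simply invokes Ansch\"utz--Le~Bras \cite[Prop.~4.5.2]{AnschutzLeBras}: for $X$ the formal completion of an abelian scheme, the ring $\overline{H^*_{\Prism}(X/A)}$ is the exterior algebra on $\overline{H^1_{\Prism}(X/A)}$, which is free of rank $2g$ over $k\lps u\rps$, so every $\overline{H^j_{\Prism}(X/A)}$ is free. Your dimension count instead combines the two comparison theorems already recorded in \ref{para:prismaticproperties}: the \'etale comparison (via \ref{para:prismaticproperties}(4) and Lemma~\ref{lemma:etalecomparison}) pins down the free rank $r_j=\binom{2g}{j}$, while the de~Rham comparison \ref{para:prismaticproperties}(1) and universal coefficients over the DVR $k\lps u\rps$ give $r_j+t_j+t_{j+1}=\dim_k H^j_{\dR}(\mathcal A_k/k)\le\binom{2g}{j}$, forcing all $t_j=0$. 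This is entirely internal to the paper's setup and avoids the external citation; it also isolates the mechanism (matching of \'etale and Hodge numbers across the two fibers) rather than the full multiplicative structure. The paper's approach is shorter and yields more, namely the exterior-algebra description; yours gives exactly what is needed and would apply verbatim to any proper smooth $X/\O_K$ whose mod~$p$ \'etale Betti numbers coincide with the Hodge numbers of its special fiber.
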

\begin{proof} The stronger assumption $i+1 < p-1$ was used in the proof of Proposition \ref{prop:keyrestrict} only to know that 
$H^{i+1}_{\bar\Prism}(X_D/A)$ is $u$-torsion free, so the first claim follows. For the second claim, we remark that Ansch\"utz-Le Bras \cite[Prop.~4.58, Cor.~4.64]{AnschutzLeBras} have shown that 
when $X$ is the formal completion of an abelian scheme, the ring $H^\bullet_{\bar \Prism}(X/A)$ is the exterior algebra on 
the $k\lps u \rps$-module $H^1_{\bar \Prism}(X/A)$ 
which is free of rank $2g = 2\dim X_K$ over $k\lps u \rps.$ 
\end{proof}

\begin{para}\label{para:albHN}  
Our next goal is to explain a variant of Proposition \ref{prop:keyrestrict} which will be useful in obtaining lower bounds on the 
essential dimension of covers whose groups are elementary $p$-groups. Let 
$$ h^{0,1\shortrightarrow i}_{X,D} : = \dim_K(\Im\!\!(\wedge^i H^0(X_K,\Omega^1_{X_K/K}(\log D)) \rightarrow H^0(X_K,\Omega^i_{X_K/K}(\log D))),$$
and 
$$ h^{0,1\shortrightarrow i}_{X_k,D_k} : = \dim_k(\Im\!\!(\wedge^i H^0(X_k,\Omega^1_{X_k/k}(\log D)) \rightarrow H^0(X_k,\Omega^i_{X_k/k}(\log D))).$$
\end{para}

\begin{prop}\label{prop:keyrestrict2} Let $W \subset X-D$ be a dense open formal subscheme, and $p > \max\{i+1, 3\}.$ 
Then 
$$ \dim_{\mathbb F_p}\Im (\wedge^i H^1_{\et}(X_{D,C}, \mathbb F_p) \rightarrow H^i_{\et}(W_C, \mathbb F_p)) 
\geq   h^{0,1\shortrightarrow i}_{X_k,D_k}, $$ 
where the map is given by the cup product followed by restriction of classes to $W_C.$
\end{prop}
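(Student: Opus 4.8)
The plan is to adapt the strategy of Proposition~\ref{prop:keyrestrict}, but to track the subalgebra generated by degree-one classes rather than a single degree-$i$ piece. Working with the Breuil--Kisin prism $A = W(k)\lps u \rps$ as before, set $M^1_{\Prism} = \Im(\overline{H^1_{\Prism}(X_D/A)} \to \overline{H^1_{\Prism}(W/A)})$, a finitely generated $k\lps u \rps$-module, and let $N_{\Prism} \subset \overline{H^i_{\Prism}(W/A)}$ be the image of $\wedge^i M^1_{\Prism}$ under the cup product. Since $p > i+1$, Lemma~\ref{lemma:notorsion} together with \ref{para:prismaticproperties}(3) shows that $\overline{H^j_{\Prism}(X_D/A)}$ is $u$-torsion free for $j \leq 2$ (here one only needs $j \le i+1$, but in fact $i\le\dim X_K$ combined with the hypothesis $p>i+1$ is what is used), hence $M^1_{\Prism}$ is free over $k\lps u \rps$; I would then argue that $N_{\Prism}$, being a quotient of $\wedge^i$ of a free module mapping into $\overline{H^i_{\Prism}(W/A)}$, is again $u$-torsion free after invoking Lemma~\ref{lemma:notorsion} one more time on $\overline{H^i_{\Prism}}$ and $\overline{H^{i+1}_{\Prism}}$ (this is where $p > i+1$ is needed, via the same $V_i$ argument), so $N_{\Prism}$ is free as well.

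Next I would run the two comparison diagrams exactly as in Proposition~\ref{prop:keyrestrict}. Inverting $\xi$ and using Lemma~\ref{lemma:etalecomparison} (plus base change \ref{para:prismaticproperties}(4) to pass from $A$ to $A_C$), the étale cup-product map $\wedge^i H^1_{\et}(X_{D,C},\F_p) \to H^i_{\et}(W_C,\F_p)$ has image of dimension at least $\dim_{k((u))} N_{\Prism}[1/u]$, and since $N_{\Prism}$ is free this equals $\dim_k N_{\Prism}/uN_{\Prism}$. Reducing instead modulo $u$ via the Frobenius base change \ref{para:prismaticproperties}(1), we get a commutative square relating $N_{\Prism}/uN_{\Prism}$ to the image of the de Rham cup-product map
$$\wedge^i H^1(X_k,\Omega^{\bullet}_{X_k/k}(\log D)) \longrightarrow H^i(W_k,\Omega^{\bullet}_{W_k/k}(\log D)),$$
so it suffices to bound the rank of this last map from below by $\binom{h^{0,1}_{(X,D)}}{i}$.

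For the de Rham bound I would use the Deligne--Illusie splitting of Lemma~\ref{lem:degdeR}: since $X$ lifts to $W(k)$, for $j < p$ the space $H^0(X_k^1,\Omega^j_{X_k^1/k}(\log D^1))$ is a canonical direct summand of $H^j_{dR}$, and the Cartier/Deligne--Illusie decomposition is multiplicative, so the image of $\wedge^i H^1_{dR}(X_k)$ contains $\wedge^i$ of the summand $H^0(X_k^1,\Omega^1(\log D^1))$, mapping into the summand $H^0(X_k^1,\Omega^i(\log D^1))$ via the wedge of forms — which is exactly the natural map $\wedge^i H^0(\Omega^1) \to H^0(\Omega^i)$ twisted by Frobenius. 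The dimension of the image of $\wedge^i H^0(X_k^1,\Omega^1) \to H^0(X_k^1,\Omega^i)$ is at least $\binom{h^0(\Omega^1)}{i}$ because a decomposable $i$-form $\omega_1 \wedge \cdots \wedge \omega_i$ is nonzero whenever the $\omega_j$ are linearly independent sections of $\Omega^1$, and more precisely the wedge map on an $n$-dimensional space of global $1$-forms has rank $\binom{n}{i}$ (the images of the $\binom{n}{i}$ wedges of basis forms are linearly independent, being linearly independent already at a general point). Finally, density of $W_k$ in $X_k$ gives injectivity of $H^0(X_k,\Omega^i(\log D)) \to H^0(W_k,\Omega^i)$ and upper semicontinuity gives $h^0(X_k,\Omega^1(\log D)) \geq h^{0,1}_{(X,D)}$, so the whole composite has rank $\geq \binom{h^{0,1}_{(X,D)}}{i}$.

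The main obstacle I expect is the torsion-freeness bookkeeping in the prismatic step: one must check that the image $N_{\Prism}$ of the wedge power really is $u$-torsion free under the hypothesis $p > i+1$ (not merely $p > $ something larger), since $N_{\Prism}$ sits inside $\overline{H^i_{\Prism}(W/A)}$ which is not known to be finitely generated — so the argument has to be routed through the finitely generated modules $\overline{H^i_{\Prism}(X_D/A)}$ and $\overline{H^{i+1}_{\Prism}(X_D/A)}$ via the $V_i$-map of \ref{para:prismaticproperties}(3), exactly as in Lemma~\ref{lemma:notorsion}, rather than applied to $N_{\Prism}$ directly. A secondary point requiring care is the multiplicativity of the Deligne--Illusie decomposition (that the Cartier isomorphism is compatible with cup products), which is standard but must be invoked explicitly to conclude that the image of $\wedge^i H^1_{dR}$ contains the image of $\wedge^i$ of the Hodge summand.
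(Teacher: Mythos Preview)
Your overall strategy matches the paper's: define the prismatic image $N_{\Prism} = \Im(\wedge^i \overline{H^1_{\Prism}(X_D/A)} \to \overline{H^i_{\Prism}(W/A)})$, show it is free over $k\lps u\rps$, and compare both its generic fiber (to \'etale cohomology) and its special fiber (to de Rham cohomology). The de Rham endgame via injectivity of $\wedge^i H^0(X_k,\Omega^1(\log D)) \to H^0(W_k,\Omega^i)$ for $i \le \dim X_K$ is exactly what the paper uses.

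However, your torsion-freeness bookkeeping is off, and this is precisely the point where the hypothesis $p > \max\{i+1,3\}$ rather than $p > i+2$ is at stake. You propose to invoke Lemma~\ref{lemma:notorsion} on $\overline{H^i_{\Prism}(X_D/A)}$ and $\overline{H^{i+1}_{\Prism}(X_D/A)}$, but the latter would require $(i+1)+1 < p$, i.e.\ $p > i+2$, which is stronger than the stated hypothesis. In fact neither of these is needed. The paper's argument uses only two torsion-freeness inputs: first, $\overline{H^2_{\Prism}(X_D/A)}$ is $u$-torsion free (Lemma~\ref{lemma:notorsion} with $d=1$, so $p>3$ suffices), which gives $\overline{H^1_{\Prism}(X_D/A)}\otimes_{A,\varphi} k \simeq H^1_{\dR}(X_k)$; second, $N_{\Prism}$ itself is $u$-torsion free, by applying Lemma~\ref{lemma:notorsion} \emph{directly} to $N_{\Prism}$ with $d=i$ (so $p > i+1$ suffices). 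Your stated obstacle---that $N_{\Prism}$ sits inside the non-finitely-generated $\overline{H^i_{\Prism}(W/A)}$---is not actually an obstacle: $N_{\Prism}$ is finitely generated because it is a quotient of $\wedge^i \overline{H^1_{\Prism}(X_D/A)}$, and it inherits compatible $\varphi$ and $V_i$ with $\varphi V_i = V_i\varphi = u^i$ by functoriality of the structures in \ref{para:prismaticproperties}(3), so Lemma~\ref{lemma:notorsion} applies to it as stated. There is no need to route through $\overline{H^i_{\Prism}(X_D/A)}$ or $\overline{H^{i+1}_{\Prism}(X_D/A)}$, and doing so would cost you the sharp bound on $p$. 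The key observation you are missing is that one never needs $\overline{H^i_{\Prism}(X_D/A)}/u \simeq H^i_{\dR}(X_k)$; only the degree-$1$ comparison is required, and the wedge product is taken afterwards on the de Rham side.
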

\begin{proof} As the proof is analogous to that of Proposition \ref{prop:keyrestrict}, we only sketch it. 
Let 
$$ M_{\Prism} = \Im(\wedge^i H^1_{\bar\Prism}(X_D/A) \rightarrow H^i_{\bar\Prism}(W/A)).$$
Arguing as in Proposition \ref{prop:keyrestrict}, since $i < p-1,$ one sees that it suffices to show 
$\dim_k M_{\Prism}/uM_{\Prism} \geq h^{0,1\shortrightarrow i}_{X_k,D_k}.$ 
Since $p > 3,$ $H^2_{\bar \Prism}(X_D/A)$ is $u$-torsion free by Lemma \ref{lemma:notorsion}, and 
we reduce to showing that the image of 
 $$  \wedge^i H^1(X_k, \Omega_{X_k/k}^{\bullet}(\log D_k)) \rightarrow H^i(W_k, \Omega_{W_k/k}^{\bullet}), $$
has dimension $ \geq h^{0,1\shortrightarrow i}_{X_k,D_k}.$ 
It then suffices to check that the image of the composite 
$$  \wedge^i H^0(X_k, \Omega^1_{X_k/k}(\log D_k)) \rightarrow H^0(X_k, \Omega^i_{X_k/k}(\log D_k)) \rightarrow H^0(W_k, \Omega^i_{W_k/k})$$ 
has dimension $ \geq h^{0,1\shortrightarrow i}_{X_k,D_k}.$ 
By definition, the image of the first map has dimension $ h^{0,1\shortrightarrow i}_{X_k,D_k},$ and the second map is injective, 
so the result follows. 
\end{proof}

\begin{para}\label{para:schemesetup} We now want to deduce analogues of the above results for schemes. 
Let $Y$ be a smooth scheme over $C,$ equipped with a normal crossings divisor $D \subset Y.$ 
As above, we write $h^{0,i}_{Y,D} = \dim_C H^0(Y,\Omega^i_{Y/C}(\log D)).$

We will denote by $Y^{\ad}$ and $D^{\ad}$ the adic spaces attached to $Y,$ and $D$ respectively. Then $(Y^{\ad}, D^{\ad})$ is a logarithmic adic space. We begin by recording 
a lemma comparing the \'etale cohomology of logarithmic adic spaces and logarithmic schemes. 
\end{para}

\begin{lem}\label{etalecomparison} Let $Y$ be a proper smooth scheme over $C,$ 
equipped with a normal crossings divisor $D \subset Y,$ and set  $U = Y-D.$
Then for $i \geq 0$ there are natural isomorphisms 
$$ H^i_{\et}(Y^{\ad}_{D^{\ad}}, \F_p) \simeq H^i_{\et}(Y_D, \F_p) \simeq H^i_{\et}(U, \F_p) $$
\end{lem}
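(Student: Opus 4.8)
The plan is to prove the two isomorphisms separately, since they are of different natures. For the second isomorphism $H^i_{\et}(Y_D,\F_p)\simeq H^i_{\et}(U,\F_p)$, the point is that the log-\'etale site of $Y_D$ (with the divisorial log structure) computes the same $\F_p$-cohomology as the ordinary \'etale site of the open complement $U=Y-D$. This is the standard comparison between log-\'etale cohomology and the cohomology of the open locus where the log structure is trivial; with $\F_p$ (more generally torsion prime to the residue characteristics) coefficients it is due to the Kato--Nakayama / Illusie style comparison, and in the generality we need it is in \cite{IllusieOverview} (see also \cite{DLLZ}). So I would simply cite this, after checking that our hypotheses (smoothness of $Y$, $D$ a normal crossings divisor, $C$ of characteristic $0$) put us squarely in the setting where that comparison applies. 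Properness is not needed for this isomorphism, but it does no harm to assume it.

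For the first isomorphism $H^i_{\et}(Y^{\ad}_{D^{\ad}},\F_p)\simeq H^i_{\et}(Y_D,\F_p)$, I would invoke a logarithmic version of Huber's (and the classical GAGA-style / Artin comparison) theorem relating the \'etale cohomology of a proper scheme over an algebraically closed nonarchimedean field $C$ to that of its associated adic space, with finite coefficients. In the non-logarithmic case this is Huber's comparison theorem (proper base change / invariance of torsion \'etale cohomology under adification); the logarithmic enhancement, which is what lets us pass the log structure $D$ through the analytification, is exactly what is set up in \cite{DLLZ} (the Diao--Lan--Liu--Zhu foundational paper on log adic spaces) together with \cite{IllusieOverview}. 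Here properness of $Y$ is essential: without it the comparison between \'etale cohomology of a scheme and of its adic space fails. So the step is: cite the log-\'etale analytification comparison from \cite{DLLZ}, noting that $(Y^{\ad},D^{\ad})$ is the log adic space associated to the proper log scheme $(Y_D)$, and that finite coefficients make the comparison an isomorphism in all degrees. Naturality in $Y$ (hence functoriality for the maps appearing later, e.g.\ restriction to opens) comes for free from the functoriality of analytification.

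The main obstacle is not a calculation but a bookkeeping/foundational one: making sure the two cited comparison theorems are stated in, or immediately reduce to, the precise generality we use here — namely a proper smooth $C$-scheme with a normal crossings (not necessarily strict, but we may assume strict \'etale-locally) divisor, log structure the divisorial one, $C$ algebraically closed and complete for a $p$-adic valuation, coefficients $\F_p$ — and that the two isomorphisms are compatible, so that the composite $H^i_{\et}(Y^{\ad}_{D^{\ad}},\F_p)\simeq H^i_{\et}(U,\F_p)$ is the one induced by the evident map of sites $U^{\ad}\to Y^{\ad}_{D^{\ad}}$ (equivalently $U\to Y_D$). I would therefore organize the proof as: (i) reduce to the strict normal crossings case \'etale-locally if needed; (ii) apply \cite[...]{DLLZ} for the scheme-to-adic-space comparison for the proper log scheme $(Y,D)$; (iii) apply the log-\'etale vs.\ open-complement comparison of \cite{IllusieOverview}, \cite{DLLZ} for both the algebraic and the adic incarnations, and note these commute with analytification; (iv) conclude. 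No step requires genuinely new input beyond these references, which is presumably why the paper states the lemma with only a pointer to \cite{IllusieOverview} and \cite{DLLZ}.
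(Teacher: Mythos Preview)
Your outline is correct and close to the paper's, but the mechanism for the first isomorphism differs in one instructive way. You propose to invoke a \emph{logarithmic} enhancement of Huber's comparison theorem directly from \cite{DLLZ}. The paper instead \emph{reduces to the non-logarithmic Huber comparison}: using \cite[Thm.~7.2]{IllusieOverview} on the scheme side and \cite[Lem.~4.6.2]{DLLZ} on the adic side, it produces a bounded complex $M$ of $\F_p$-sheaves on the ordinary \'etale site $Y_{\et}$ such that $H^i_{\et}(Y,M)\simeq H^i_{\et}(Y_D,\F_p)$ and $H^i_{\et}(Y^{\ad},M^{\ad})\simeq H^i_{\et}(Y^{\ad}_{D^{\ad}},\F_p)$; then ordinary Huber \cite[Prop.~2.1.4, Thm.~3.8.1]{Huber} gives $H^i_{\et}(Y,M)\simeq H^i_{\et}(Y^{\ad},M^{\ad})$. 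In other words, the log structure is absorbed into the coefficient complex (morally $Rj_*\F_p$ for $j:U\hookrightarrow Y$), and the analytification step is purely classical.

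What this buys: the paper's route avoids having to locate in \cite{DLLZ} a black-box ``log Huber'' statement in exactly the right generality---which is precisely the bookkeeping obstacle you flagged---and makes the compatibility of the two isomorphisms transparent, since both are computed through the same complex $M$. Your route would also work if such a statement is available, but the reduction-to-ordinary-coefficients trick is more robust and self-contained. For the second isomorphism you and the paper agree: it is \cite[Thm.~7.4]{IllusieOverview}.
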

\begin{proof} For second isomorphism see \cite[Thm.~7.4]{IllusieOverview}. 
The first isomorphism follows from \cite[Thm~7.2, Cor~7.5]{IllusieOverview}, \cite[Lem.~4.6.2]{DLLZ} and 
\cite[Prop.~2.1.4, Thm~3.8.1]{Huber}. 
More precisely, let $\varepsilon: Y_{D,k\et} \rightarrow Y_{\et}$ be the natural morphism of sites, and 
set $M = R\varepsilon_* \F_p.$ Then 
the first two references show that 
\begin{itemize}
\item $H^i_{\et}(Y,M)$ is naturally isomorphic to $H^i_{\et}(Y_D, \F_p).$ 
\item If $M^{\ad}$ denotes the pullback of $M$ to $Y^{\ad}_{\et},$ then 
$H^i_{\et}(Y^{\ad},M^{\ad})$ is naturally isomorphic to $H^i_{\et}(Y^{\ad}_{D^{\ad}}, \F_p).$
\end{itemize}
The two results in \cite{Huber}  then provide a natural isomorphism $H^i_{\et}(Y,M) \simeq H^i_{\et}(Y^{\ad},M^{\ad}).$
\end{proof}

\begin{prop}\label{prop:restrictschem} Let $X$ be a proper smooth scheme over $\O_K,$ equipped with a relative normal crossings divisor $D \subset X.$ Set $U = X\backslash D,$  and let $W \subset U_C$ be a dense open subscheme. 
If $0 \leq i < p-2,$ then 
$$ \dim_{\mathbb F_p}\Im (H^i_{\et}(U_C, \mathbb F_p) \rightarrow H^i_{\et}(W, \mathbb F_p)) \geq  h^{0,i}_{X_C,D_C}.$$
If $X$ is an abelian scheme and $D$ is empty, then the same statement holds for $i \leq p-2.$

If $p > \max\{i+1, 3\}$ then 
$$ \dim_{\mathbb F_p}\Im (\wedge^i H^1_{\et}(U_C, \mathbb F_p) \rightarrow H^i_{\et}(W, \mathbb F_p)) 
\geq  h^{0,1\shortrightarrow i}_{X_k,D_k}.$$
\end{prop}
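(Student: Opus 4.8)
The plan is to deduce Proposition~\ref{prop:restrictschem} from its formal-scheme analogues, Propositions~\ref{prop:keyrestrict} and~\ref{prop:keyrestrict2} (and Corollary~\ref{cor:keyrestrict} for the abelian case), by passing from algebraic schemes to their formal completions along the special fiber and comparing étale cohomology. First I would set up notation: write $\hat X$ for the $p$-adic formal completion of $X$ over $\O_K$, with its relative normal crossings divisor $\hat D$, and note that $\hat X$ is proper and formally smooth over $\O_K$. By the theorem on formal functions / GAGA for the $\log$ de Rham complex, $h^{0,i}_{(\hat X,\hat D)} = h^{0,i}_{(X_C,D_C)}$, so the numerical targets match up. The key point is that the étale cohomology of the algebraic generic fiber $U_C$ agrees with the étale cohomology appearing in Proposition~\ref{prop:keyrestrict}: by Lemma~\ref{etalecomparison} we have $H^i_{\et}(U_C,\F_p)\simeq H^i_{\et}((X_C)^{\ad}_{D^{\ad}_C},\F_p)$, and the latter is identified with $H^i_{\et}(\hat X_{\hat D,C},\F_p)$ — here one invokes the comparison between the adic space attached to the proper algebraic $X_C$ and the adic generic fiber of its formal completion $\hat X_{\O_C}$ (Huber, together with properness).

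The next step handles the open subscheme $W\subset U_C$. Since $W$ is a dense open in the algebraic variety $U_C$, one cannot directly apply the formal-scheme statement, which is phrased for a dense open \emph{formal} subscheme of $\hat X - \hat D$. The fix is to observe that the image of $H^i_{\et}(U_C,\F_p)\to H^i_{\et}(W,\F_p)$ can only decrease as $W$ shrinks, so it suffices to treat $W$ of a convenient form; and one can choose a dense open \emph{affine} $W'\subset W$, hence reduce to the case $W$ affine. An affine open $W\subset U_C$ need not be defined over $\O_K$, but after possibly enlarging $\O_K$ inside $C$ (the hypotheses are insensitive to such enlargement, using \ref{para:prismaticproperties}(4),(5) for compatibility with base change of prisms), it spreads out to a dense open formal subscheme of $\hat X_{\O_C}-\hat D$; here one uses that $C$ is a filtered union of its complete discretely valued — actually, more carefully, that any finitely presented datum over $C$ descends to some complete discretely valued subfield, combined with the base change property \ref{para:prismaticproperties}(5). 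Alternatively, and more cleanly, one applies Proposition~\ref{prop:keyrestrict} directly over $\O_C$-coefficients: its proof goes through with $A$ replaced by $A_C$ provided one still has the torsion-freeness input, which is exactly what \ref{para:prismaticproperties}(3)+Lemma~\ref{lemma:notorsion} give. So the cleanest route is to re-run the proof of Proposition~\ref{prop:keyrestrict} with $W$ now a dense open formal subscheme of $\hat X_{\O_C}$ and the étale cohomology identified with $H^i_{\et}(W_C,\F_p)= H^i_{\et}(W,\F_p)$ via Lemma~\ref{etalecomparison} applied to an open (non-proper) situation — for the non-proper comparison one uses \cite[Thm.~7.4]{IllusieOverview} as in the proof of Lemma~\ref{etalecomparison}, which does not require properness.

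For the second assertion, the abelian scheme case, one simply replaces the appeal to Proposition~\ref{prop:keyrestrict} by Corollary~\ref{cor:keyrestrict}: the formal completion of an abelian scheme over $\O_K$ is the formal abelian scheme to which Anschütz--Le Bras applies, giving $u$-torsion freeness of $\overline{H^{i+1}_{\Prism}}$ in the required range, hence the bound for $i\le p-2$. For the third assertion, the wedge-power statement, the identical reduction applies with Proposition~\ref{prop:keyrestrict2} in place of Proposition~\ref{prop:keyrestrict}; the cup-product structure is compatible with all the comparison isomorphisms (they are isomorphisms of algebras in $D(A)$, by \ref{para:prismaticproperties}), so the wedge map on $H^1$ transports correctly. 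The main obstacle I anticipate is purely bookkeeping rather than conceptual: carefully matching the \emph{algebraic} étale cohomology of $U_C$ and of the open $W$ with the \emph{adic}/formal cohomology used in the formal-scheme propositions, i.e.\ checking that Lemma~\ref{etalecomparison}-type comparisons hold for the possibly non-proper $W$ and are compatible with restriction and cup product, and ensuring the descent of $W$ (or the coefficient base change to $A_C$) is harmless. Once these compatibilities are in place, the inequalities are immediate transcriptions of Propositions~\ref{prop:keyrestrict}, \ref{prop:keyrestrict2} and Corollary~\ref{cor:keyrestrict}.
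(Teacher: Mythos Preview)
Your overall strategy---reduce to the formal-scheme Propositions~\ref{prop:keyrestrict}, \ref{prop:keyrestrict2} and Corollary~\ref{cor:keyrestrict} via $p$-adic completion and Lemma~\ref{etalecomparison}---is exactly right, and your treatment of the proper object $X_D$ is fine. The gap is in how you handle the open $W$.

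All three of your suggestions for $W$ run into trouble. Descending $W$ to a discretely valued subfield is awkward because $W$ is only defined over $C$, and you would need the model to sit inside the given integral model of $X$; this can be arranged but is not as automatic as you suggest. Re-running Proposition~\ref{prop:keyrestrict} entirely over $A_C$ means redoing Lemma~\ref{lemma:notorsion} with $k\lps u\rps$ replaced by $A_C/p = R$, which you do not address. Most seriously, invoking a Lemma~\ref{etalecomparison}-type isomorphism for the \emph{non-proper} $W$ is not justified: the reference to \cite[Thm.~7.4]{IllusieOverview} only handles the log-versus-open comparison, while the algebraic-versus-adic comparison (Huber) used in Lemma~\ref{etalecomparison} genuinely uses properness.

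The point you are missing is that no comparison isomorphism for $W$ is needed---only a \emph{map} in one direction. Concretely: take the closure $Z$ of $X_C\setminus W$ inside $X_{\O_C}$, and let $\widehat W\subset\widehat X$ be the formal open complement of the special fiber $Z\otimes k$ (after first enlarging $K$ so that $k=k_C$, which is harmless). Then $\widehat W$ is a dense open formal subscheme of $\widehat X-\widehat D$ over $\O_K$, so Propositions~\ref{prop:keyrestrict}, \ref{prop:keyrestrict2} apply to it directly. The crucial observation is the inclusion of adic spaces $\widehat W_C\subset W^{\ad}$, which yields restriction maps
\[
H^i_{\et}(W,\F_p)\longrightarrow H^i_{\et}(W^{\ad},\F_p)\longrightarrow H^i_{\et}(\widehat W_C,\F_p).
\]
Combined with the isomorphism $H^i_{\et}(U_C,\F_p)\simeq H^i_{\et}(\widehat X_{\widehat D,C},\F_p)$ from Lemma~\ref{etalecomparison} (where properness \emph{is} available), you get a commutative square whose bottom row is exactly the map in Proposition~\ref{prop:keyrestrict}. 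Since the dimension of the image can only drop under the vertical maps, the lower bound for $\widehat W$ gives the lower bound for $W$.
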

\begin{proof} Let $k_C$ be the residue field of $C.$ We may replace $X$ by its base change to $W(k_C),$ and assume that $C$ and $K$ have the same residue field. Denote by $\widehat X$ and $\widehat D$ the formal completions of $X$ and $D.$ 
Let $Z$ be the closure of $X_C - W$ in $X_{\O_C},$ and 
let $\widehat W \subset \widehat X$ be the formal open subscheme, which is the complement of $Z\otimes k$ in $\widehat X.$ 
Note that we have $\widehat W_C \subset W^{\ad}.$ 
Thus we have a commutative diagram of natural maps 
$$ \xymatrix{H^i_{\et}(X_{D,C}, \mathbb F_p)\ar[r]\ar[dd]^\sim & H^i_{\et}(W, \mathbb F_p) \ar[d] \\
  & H^i_{\et}(W^{\ad}, \mathbb F_p) \ar[d] \\
 H^i_{\et}(\widehat X_{D,C}, \mathbb F_p) \ar[r] & H^i_{\et}(\widehat W_C, \mathbb F_p) 
} $$
where the isomorphism on the left is given by Lemma \ref{etalecomparison}. 
By Propositions \ref{prop:keyrestrict},  \ref{prop:keyrestrict2}  and Corollary \ref{cor:keyrestrict}, we have the inequalities claimed 
in the proposition, but for the dimension of the images of $H^i_{\et}(\widehat X_{D,C}, \mathbb F_p)$ 
and $\wedge^i H^1_{\et}(\widehat X_{D,C}, \mathbb F_p)$ in $H^i_{\et}(\widehat W_C, \mathbb F_p).$ 
The proposition now follows from the commutative diagram above, as well as Lemma \ref{etalecomparison}.
\end{proof}

\begin{para}\label{para:goodredn} The previous results apply for schemes that are smooth over $\O_K.$ We now want to reformulate these results to show that they hold for any algebraically closed field of characteristic 0, for $p\gg 0.$ Thus we now assume that $C$ is algebraically closed of characteristic 0, but we no longer assume it is equipped with a complete  $p$-adic valuation.

Let $Y$ be a proper, smooth scheme over $C,$ and $D \subset Y$ a normal crossings divisor. 
We say that $(Y,D)$ has {\it good reduction at $p$} if there exists 
a $p$-adic valuation on $C$ (which we do not assume complete), with ring of integers $\O_C,$ and $Y$ extends to a smooth proper $\O_C$-scheme $Y^\circ$ with a relative normal crossings divisor $D^\circ \subset Y^\circ$ over $\O_C,$ extending $D.$ 
We say that $(Y,D)$ has {\it unramified good reduction at $p$} if in addition $(Y^{\circ}, D^{\circ})$ can be chosen so that it descends to an absolutely unramified discrete valuation ring (with respect to the given valuation) $\O \subset \O_C.$ 

Let $k$ be the residue field of $\O.$ Then we have the invariants $h^{0,1\shortrightarrow i}_{Y^\circ_k,D^\circ_k}$ defined 
as in \ref{para:albHN}. We set  $h^{0,1\shortrightarrow i}_{Y,D,p} = \max h^{0,1\shortrightarrow i}_{Y^\circ_k,D^\circ_k}$ 
with the maximum taken over all choices of $(Y^{\circ}, D^{\circ})$ as above.
\end{para}

\begin{cor}\label{cor:goodredn} Let $Y$ be a proper, smooth scheme over $C,$ $D \subset Y$ a normal crossings divisor, and  $W \subset U = Y - D$ a dense open subscheme. 
Suppose that $(Y,D)$ has unramified good reduction at $p.$ If $0 \leq i < p-2,$ then 
$$ \dim_{\mathbb F_p}\Im (H^i_{\et}(U, \mathbb F_p) \rightarrow H^i_{\et}(W, \mathbb F_p)) \geq h^{0,i}_{Y,D}.$$
If $Y$ is an abelian scheme, then the same statement holds for $i \leq p-2.$

If $p > \max\{i+1, 3\}$ then 
$$ \dim_{\mathbb F_p}\Im (\wedge^i H^1_{\et}(U, \mathbb F_p) \rightarrow H^i_{\et}(W, \mathbb F_p)) 
\geq  h^{0,1\shortrightarrow i}_{Y,D,p}. $$
\end{cor}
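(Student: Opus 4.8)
The plan is to deduce Corollary~\ref{cor:goodredn} from Proposition~\ref{prop:restrictschem} by a straightforward spreading-out and base-change argument, reducing the case of an arbitrary algebraically closed field $C$ (with a not-necessarily-complete $p$-adic valuation) to the case where the base is a complete discretely valued ring. First I would unwind the definition of unramified good reduction from \ref{para:goodredn}: we are given a $p$-adic valuation on $C$ with valuation ring $\O_C$, a smooth proper $\O_C$-scheme $Y^\circ$ with relative normal crossings divisor $D^\circ$ extending $(Y,D)$, and the extra datum that $(Y^\circ, D^\circ)$ descends to a pair $(X_0, D_0)$ over an absolutely unramified discrete valuation ring $\O \subset \O_C$. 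Write $K_0 = \mathrm{Frac}(\O)$; then $\O = W(k)$ for $k$ the (perfect, since the valuation on $C$ is trivial on... )—more precisely I would first arrange, by a further base change of $\O$ along $W(k) \to W(k')$ if necessary as in the proof of Proposition~\ref{prop:restrictschem}, that the residue field $k$ is perfect. So $(X_0, D_0)/\O$ is exactly the input needed for Proposition~\ref{prop:restrictschem}, with $K$ there taken to be $K_0 = \mathrm{Frac}(W(k))$.

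\smallskip

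The second step is to relate the cohomology over $C$ to the cohomology over a \emph{complete} algebraically closed field. Let $\widehat{C}$ be the completion of $C$ with respect to (the restriction to it of a rank-one valuation underlying) the given $p$-adic valuation, and let $\overline{C}$ be an algebraic closure of $\widehat{C}$; this is an algebraically closed field complete with respect to a $p$-adic valuation, containing $K_0$, so it plays the role of the field ``$C$'' in Proposition~\ref{prop:restrictschem}. Now I invoke smooth (or proper) base change for $\F_p$-étale cohomology of schemes: for any proper or smooth scheme $T$ over $C$ and any extension $C \hookrightarrow C'$ of algebraically closed fields, the map $H^i_{\et}(T, \F_p) \to H^i_{\et}(T_{C'}, \F_p)$ is an isomorphism. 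Applying this to $U = Y - D$, to $W$, and to $Y$ itself (for $H^1$), and using that the restriction maps are natural in the base, we see that the dimension of the image of $H^i_{\et}(U,\F_p) \to H^i_{\et}(W,\F_p)$ equals the dimension of the image of $H^i_{\et}(U_{\overline{C}}, \F_p) \to H^i_{\et}(W_{\overline{C}}, \F_p)$, and similarly for the $\wedge^i H^1$ statement. One subtlety: $W$ is only defined over $C$, so I need that the hypotheses of Proposition~\ref{prop:restrictschem} are met over $\overline{C}$, i.e.\ that $W_{\overline{C}} \subset U_{\overline{C}}$ is a dense open subscheme — which is immediate since density is preserved by flat base change, and $(X_0, D_0)$ base-changed to $W(k_{\overline C})$ still extends $(Y_{\overline C}, D_{\overline C})$ by flatness of $\O_C \to \O_{\overline C}$ and of $\O \hookrightarrow \O_{\overline{C}}$.

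\smallskip

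The third step is purely a matter of matching Hodge numbers: I must check $h^{0,i}_{(Y,D)} = h^{0,i}_{(X_{0,C'}, D_{0,C'})}$ for the generic fibre over $\overline{C}$, and likewise for $h^{0,1}$. But $H^0(Y, \Omega^i_{Y/C}(\log D))$ commutes with the flat base change $C \to \overline{C}$, and $(X_0, D_0) \otimes_{\O} \overline{C}$ has generic fibre $(Y_{\overline C}, D_{\overline C})$ by construction, so the dimensions agree. Then Proposition~\ref{prop:restrictschem}, applied over $\O$ with $K = K_0$ and with the complete algebraically closed field $\overline C \supset K_0$, gives exactly the three claimed inequalities over $\overline{C}$, and transporting back along the base-change isomorphisms of step two yields the statement over $C$. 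The abelian scheme case is identical, using the corresponding clause of Proposition~\ref{prop:restrictschem} (which rests on Corollary~\ref{cor:keyrestrict} and the Anschütz–Le Bras computation).

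\smallskip

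I expect the main obstacle to be bookkeeping rather than anything deep: precisely setting up the tower $\O \subset \O_C \subset \O_{\overline{C}}$ of valuation rings with compatible valuations and perfect residue fields, and making sure that ``unramified good reduction'' as defined in \ref{para:goodredn} really does hand us a model over an \emph{absolutely unramified} discrete valuation ring $\O$ with perfect residue field (or can be promoted to one) so that $\O = W(k)$ and Proposition~\ref{prop:restrictschem} applies verbatim. The étale base-change invariance of $\F_p$-cohomology along extensions of algebraically closed fields, for smooth and for proper schemes, is standard (SGA~4) and requires no comment beyond a citation; the Hodge-number comparison is flat base change for coherent cohomology. So the proof is essentially a reduction, and its content is entirely in Propositions~\ref{prop:keyrestrict}, \ref{prop:keyrestrict2} and Corollary~\ref{cor:keyrestrict}.
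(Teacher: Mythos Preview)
Your proposal is correct and follows essentially the same approach as the paper: use invariance of $\F_p$-\'etale cohomology under extensions of algebraically closed fields to reduce to the case where $C$ is complete, then apply Proposition~\ref{prop:restrictschem} to the model over the absolutely unramified $\O$. One small redundancy worth noting: since $C$ is already algebraically closed, its completion $\widehat{C}$ is again algebraically closed (Krasner's lemma), so your further passage to an algebraic closure $\overline{C}$ is unnecessary --- and indeed your claim that $\overline{C}$ is complete would be problematic in general, but here $\overline{C}=\widehat{C}$ and no harm is done.
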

\begin{proof}  If $C \subset C'$ is any algebraically closed field, then the \'etale cohomology groups in the corollary do not change if we replace 
$(Y,D)$ and $W$ by their base change to $C'.$ Thus we may assume that $C$ is complete. We may then also assume that $\O \subset \O_C$ 
in \ref{para:goodredn} is $p$-adically complete, and unramified. The result now follows from Proposition \ref{prop:restrictschem}.
\end{proof}

\begin{para}\label{para:genericgoodredn} We remark that when $(Y,D)$ is a generic member of a good moduli space, then $(Y,D)$ has unramified good reduction at {\it all} primes, and this condition in Corollary \ref{cor:goodredn} can 
then be suppressed.

More precisely, suppose $\mathcal Y \rightarrow \M$ is proper smooth, with $\M$ a smooth, faithfully flat, connected, separated 
Deligne-Mumford stack over $\mathbb Z$, as in \cite[Definition 4.1, 4.14]{LaumonMoretBailly}. 
Let $\mathcal D \subset \mathcal Y$ be a relative normal crossing divisor over $\M.$ 
If $\Spec C \rightarrow \M$ is a map whose image is the generic point $\eta \in \M,$ then $(Y,D) = (\mathcal Y\times_{\M} \Spec C, \mathcal D\times_{\M} \Spec C)$ has unramified good reduction at all primes. 
\footnote{Note that we are {\em not} asserting that all points in an open substack have good unramified reduction at all primes. 
Indeed, unless  $\mathcal D$ is empty, there will be no such open substack even if we ask for good unramified reduction at a single prime.}

To see this, note that our assumptions on $\M$ imply that there exists an \'etale presentation $\tilde \M \rightarrow \M,$ 
with $\tilde \M$ a scheme which is smooth and faithfully flat over $\mathbb Z.$ Hence $\eta$ lifts to 
 a generic point $\tilde \eta \in \tilde \M(C),$ which admits a unique specialization to a characteristic $p$ point $\bar{\tilde \eta} \in \M.$ 
As $\tilde \M$ is smooth over $\Z,$ the local ring $\O_{\tilde \M,\bar{\tilde\eta}}$ is a discrete valuation ring with uniformizer $p$, and this induces the required unramified $p$-adic valuation on $C.$
This discussion applies, for example, to the universal family of principally polarized abelian varieties or curves. 
\end{para}

\begin{cor}\label{cor:arithmeticrestr} Let $Y$ be a proper, smooth scheme over $C,$ $D \subset Y$ a normal crossings divisor, 
$U = Y-D,$ and  $ \eta \in Y$ the generic point. Then for $i \geq 0,$ and  $p \gg 0,$  
$$ \dim_{\mathbb F_p}\Im (H^i_{\et}(U, \mathbb F_p) \rightarrow H^i_{\et}(\eta, \mathbb F_p)) \geq  h^{0,i}_{Y,D}. $$
and 
$$ \dim_{\mathbb F_p}\Im (\wedge^i H^1_{\et}(U, \mathbb F_p) \rightarrow H^i_{\et}(\eta, \mathbb F_p)) \geq 
h^{0,1\shortrightarrow i}_{Y,D,p}. $$
\end{cor}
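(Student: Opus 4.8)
The plan is to reduce the statement to Corollary \ref{cor:goodredn} in two steps: first spread $(Y,D)$ out over a finite type $\Z$-algebra so as to equip $C$ with a suitable $p$-adic valuation for almost all $p$, and then pass from the estimate on dense open subschemes given by Corollary \ref{cor:goodredn} to the estimate at the generic point $\eta$ by a filtered colimit argument. (We take $Y$ smooth, as is implicit in speaking of a normal crossings divisor $D\subset Y$; since $Y$ is connected and smooth it is irreducible, so $\eta$ is also the generic point of $U$.)

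\emph{Spreading out.} Standard limit arguments descend $(Y,D)$ to a finite type $\Z$-subalgebra of $C$, and after inverting finitely many primes and shrinking $\Spec$ of this algebra we obtain an integral domain $R\subset C$, smooth and faithfully flat over $\Z[1/N]$ for some $N\geq 1$, together with a smooth proper $R$-scheme $\mathcal Y$ carrying a relative normal crossings divisor $\mathcal D$, such that $(Y,D)\simeq(\mathcal Y,\mathcal D)\times_R C$ via $R\subset C$. Here one uses that smoothness and properness of the structure morphism, and the property of $\mathcal D$ being a relative normal crossings divisor, all hold over a dense open of the base — the last since it holds on the generic fiber, which carries the normal crossings divisor $D$ — and that $R$ is regular over $\Z[1/N]$ by generic smoothness in characteristic $0$. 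Then, arguing as in \ref{para:genericgoodredn}: for a prime $p\nmid N$ the fiber $\Spec(R/pR)$ is nonempty by faithful flatness, so choose a generic point $\bar\eta$ of it; since $R$ is regular and $V(p)\subset\Spec R$ is a Cartier divisor reduced at $\bar\eta$ (that fiber being smooth over $\F_p$), the local ring $\O:=\O_{\Spec R,\bar\eta}$ is an absolutely unramified discrete valuation ring with uniformizer $p$. The map $\Spec C\to\Spec R$ hits the generic point, so extending the valuation of $\O$ to $C$ and setting $Y^\circ:=\mathcal Y\times_R\O_C$, $D^\circ:=\mathcal D\times_R\O_C$ exhibits the unramified good reduction of $(Y,D)$ at $p$ required in \ref{para:goodredn}.

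\emph{Passing to the generic point.} Fix $p$ with $p\nmid N$ and $p>i+2$ (and, for the second inequality, $p>\max\{i+1,3\}$, the hypothesis $i\leq\dim Y$ being given); all but finitely many $p$ qualify. By the previous step and Corollary \ref{cor:goodredn}, for every dense open subscheme $W\subset U$ we have
$$ \dim_{\F_p}\Im\!\big(H^i_{\et}(U,\F_p)\to H^i_{\et}(W,\F_p)\big)\ \geq\ h^{0,i}_{(Y,D)}, $$
and similarly $\dim_{\F_p}\Im\!\big(\wedge^i H^1_{\et}(U,\F_p)\to H^i_{\et}(W,\F_p)\big)\geq\binom{h^{0,1}_{(Y,D)}}{i}$. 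Now $H^i_{\et}(\eta,\F_p)=\varinjlim_W H^i_{\et}(W,\F_p)$, the colimit over the cofiltered family of dense opens $W\subset U$ (it suffices to use the cofinal family of affine ones, whose transition maps are affine). Hence the image of $H^i_{\et}(U,\F_p)$ in $H^i_{\et}(\eta,\F_p)$ is the filtered colimit of its images $V_W\subseteq H^i_{\et}(W,\F_p)$, with surjective transition maps $V_W\twoheadrightarrow V_{W'}$ for $W'\subset W$. Choosing $W_0$ with $\dim_{\F_p}V_{W_0}$ minimal (possible as these dimensions are bounded nonnegative integers, nonincreasing as $W$ shrinks), the maps $V_{W_0}\to V_{W'}$ for $W'\subset W_0$ are isomorphisms, so $\Im(H^i_{\et}(U,\F_p)\to H^i_{\et}(\eta,\F_p))\simeq V_{W_0}$, of dimension $\geq h^{0,i}_{(Y,D)}$. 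The identical argument with $\wedge^i H^1_{\et}$ gives the second inequality.

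The main obstacle is the spreading-out step: one must arrange simultaneously that, after localizing the base, the family remains smooth and proper, the boundary remains a relative normal crossings divisor, and the base becomes smooth over $\Z[1/N]$, so that the local rings of $\Spec R$ at the generic points of its characteristic-$p$ fibers are genuinely absolutely unramified discrete valuation rings with uniformizer $p$. The passage to the generic point is a soft finiteness argument, and the substantive geometric input — the prismatic cohomology computation — is already packaged inside Corollary \ref{cor:goodredn} (via Proposition \ref{prop:restrictschem}).
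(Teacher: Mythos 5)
Your proposal is correct and follows essentially the same route as the paper: spread $(Y,D)$ out over a finite type $\Z$-subalgebra of $C$ so as to verify unramified good reduction at almost all primes, then invoke Corollary \ref{cor:goodredn}. The only cosmetic differences are that the paper obtains its absolutely unramified discrete valuation ring by normalizing the base $\A$ and using that $\A/p\A$ is reduced at a height-one prime over $p$ (rather than shrinking to a base smooth over $\Z[1/N]$ as in \ref{para:genericgoodredn}, which is what you do), and that the paper leaves implicit the colimit/stabilization passage from dense open subschemes to the generic point, which you spell out explicitly.
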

\begin{proof} By Corollary \ref{cor:goodredn}, it suffices to show that $(Y,D)$ has unramified good reduction at $p$ for sufficiently large $p.$ 
Since $Y$ is of finite type, there exists finite type $\mathbb Z$-algebra $\A \subset C$ such that $(Y,D)$ descends to 
a proper smooth $\A$-scheme $Y_{\A}$ equipped with a relative normal crossings divisor $D_{\A} \subset Y_{\A}.$ 

We may replace $\A$ by its normalization, and assume it is normal. Since $\A\otimes \Q$ is reduced, it is geometrically reduced. 
Hence for $p\gg 0,$ the $\F_p$-algebra $\A/p\A$ is reduced \cite[IV, Prop.~4.6.1, Thm.~9.7.7]{EGA}. 
Since $\A$ is of finite type over $\mathbb Z,$ for 
$p\gg 0,$ $p$ is the image of a prime $\mathfrak p \in \Spec \A$ of height $1.$ Fix $p$ such that $\A/p\A$ is reduced, 
and $\mathfrak p$ exists. Since $\A/p\A$ is reduced, $\A_{\mathfrak p}$ is a discrete valuation ring, with uniformizer $p.$ 
Extend the corresponding valuation $v_{\mathfrak p}$ to $C.$ Then $(Y,D)$ descends to $(Y_{\A}, D_{\A}),$ so $(Y,D)$ has 
unramified good reduction at $p.$
\end{proof}

\subsection{Characteristic classes}\label{subsec:characlasses} We continue to denote by $C$ an algebraically closed field of characteristic $0.$ 

\begin{para} Let $X$ be a proper, connected, smooth $C$-scheme, 
equipped with a normal crossings divisor $D,$ and let $U = X - D.$ 
We fix a geometric point $\bar \eta$ mapping to the generic point $\eta \in X.$ Let $G$ be a finite quotient of $\pi_{1,\et}(U,\bar\eta).$ 
For any $i$ there are canonical maps 
\begin{equation}\label{eqn:inflation}
H^i(G, \F_p) \rightarrow H^i(\pi_{1,\et}(U,\bar\eta), \F_p) \rightarrow H^i_{\et}(U,\F_p),
\end{equation}
where the first map is inflation of classes from $G$ to $\pi_{1,\et}(U,\bar\eta),$ and the second map is induced by considering 
 $\pi_{1,\et}(U,\bar\eta)$-representations as \'etale sheaves on $U.$
In the proposition below we will consider the condition that the composite of the two maps above is surjective. 
Note that since  $\pi_{1,\et}(U,\bar\eta)$ is topologically finitely generated, we can always choose $G$ such that 
the {\em first map} is surjective. In particular, 
if $U$ is an \'etale $K(\pi,1)$ we can choose $G$ so that the composite is surjective, but this is not the case in general. 

Finally for a $G$-cover $Y \rightarrow U,$ let $\ed(Y/U;p)$ denote the {\em$ p$-essential dimension} of $Y$ over $U$ 
\cite{ReiYo}, i.e. $\ed(Y/U;p)$  is the least $d$ for which there exists:
\begin{enumerate}
		\item a dense Zariski open $V\subset U$, 
		\item a finite  \'etale map $\pi\colon E\to V$ with $p\nmid \deg(\pi)$,
		\item a morphism $f\colon E\to Z$ with $\dim Z=d$, and
		\item a $G$-cover $\tilde{Z}\to Z$ for which $f^*\tilde{Z}\simeq Y\times_U E$.
\end{enumerate}
\end{para}

\begin{prop}\label{prop:essdimlowerbounds} Suppose that $i < p-2$ 
and that $(X,D)$ has unramified good reduction at $p.$ Let $G$ be a finite group  and let $Y \rightarrow U$ be a $G$-cover. 
Suppose that $h^{0,i}_{X,D} \neq 0$ and that the (restriction of the) classifying map 
\begin{equation}\label{eqn:charclassmap}
H^i(G, \F_p) \rightarrow H^i_{\et}(U,\F_p) 
\end{equation} 
is surjective.  Then $\ed(Y/U;p) \geq i.$  If $X$ is an abelian variety and $D=\emptyset$, the above holds for $i\leq p-2$.
\end{prop}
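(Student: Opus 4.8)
The plan is to combine the non-vanishing statement of Corollary \ref{cor:goodredn} (or rather its refinement at the generic point, Corollary \ref{cor:arithmeticrestr}) with the elementary upper bound on the cohomological dimension of affine varieties, exactly as sketched in the introduction. First I would recall the definition of $p$-essential dimension: $\ed(Y/U;p) \leq e$ means that there is a diagram $U' \to U$ of degree prime to $p$ (after shrinking $U$ to a dense open), a variety $V$ of dimension $e$ with a $G$-cover $Y_V \to V$, and a map $U' \to V$ such that $Y|_{U'} \cong Y_V \times_V U'$. Passing to a dense open, I may assume $V$ is affine and smooth; shrinking $U$ and $U'$ correspondingly, I may take $U'$ to be an irreducible affine variety finite of degree prime to $p$ over a dense open $W \subset U$.

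Next I would argue by contradiction: suppose $\ed(Y/U;p) = e < i$. The $G$-cover $Y_V \to V$ is classified by a map $V \to BG$, hence the classifying class of $Y|_{U'} \to U'$ factors through $H^i(G,\F_p) \to H^i_{\et}(V,\F_p) \to H^i_{\et}(U',\F_p)$. Since $V$ is affine of dimension $e < i$, Artin's bound on the cohomological dimension of affine varieties gives $H^i_{\et}(V,\F_p) = 0$, so the composite $H^i(G,\F_p) \to H^i_{\et}(U',\F_p)$ is zero. On the other hand, this composite is the restriction to $U'$ of $H^i(G,\F_p) \to H^i_{\et}(U,\F_p) \to H^i_{\et}(W,\F_p)$, and the hypothesis that \eqref{eqn:charclassmap} is surjective, together with $h^{0,i}_{(X,D)} \neq 0$ and Corollary \ref{cor:goodredn} (in the abelian variety case, its extension to $i \le p-2$), shows that the image of $H^i(G,\F_p)$ in $H^i_{\et}(W,\F_p)$ is nonzero. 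To conclude I need that restriction $H^i_{\et}(W,\F_p) \to H^i_{\et}(U',\F_p)$ along a prime-to-$p$ finite cover is injective on this image; this follows from the existence of a transfer (trace) map $H^i_{\et}(U',\F_p) \to H^i_{\et}(W,\F_p)$ whose composite with restriction is multiplication by the degree $[U':W]$, which is invertible mod $p$. This gives the desired contradiction, so $\ed(Y/U;p) \geq i$.

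The one point requiring a little care — and the main potential obstacle — is the reduction to the affine, smooth situation and correctly bookkeeping the opens: one must check that the definition of $p$-essential dimension allows one to shrink $U$, $U'$, and $V$ freely so that Corollary \ref{cor:goodredn} (which needs a dense open $W \subset U$) and Artin vanishing (which needs $V$ affine) can both be applied simultaneously. Concretely, given the data realizing $\ed(Y/U;p) \le e$, one replaces $V$ by a dense affine open, replaces $U'$ by the preimage of that open, and lets $W$ be the image of $U'$ in $U$ (or a dense open contained in it over which $U' \to U$ is finite flate of degree prime to $p$); then all three inputs are in place. The rest is formal. I would also remark that the final sentence of the Proposition, concerning $X$ an abelian variety with $D = \emptyset$, uses only that in that case Corollary \ref{cor:goodredn} holds in the wider range $i \le p-2$, which was established there via the Anschütz–Le Bras computation; no change to the argument above is needed beyond citing that stronger input.
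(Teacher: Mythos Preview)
Your proposal is correct and follows essentially the same argument as the paper: surjectivity of \eqref{eqn:charclassmap} plus Corollary \ref{cor:goodredn} gives a nonzero class, the trace along a prime-to-$p$ cover preserves its nonvanishing, and Artin vanishing for affine $V$ of dimension $<i$ forces a contradiction. The only cosmetic difference is that the paper passes to the generic points $\eta \in U$ and $\eta' \in U'$ rather than juggling opens $W \subset U$ and shrinkings of $U'$; this sidesteps your bookkeeping paragraph entirely, since any dense open of $U'$ contains $\eta'$ and the extension $\eta'/\eta$ is automatically finite of prime-to-$p$ degree. (Also: your parenthetical reference to Corollary \ref{cor:arithmeticrestr} is a slip --- that result is an ``almost all $p$'' statement, whereas here you want Corollary \ref{cor:goodredn} for the specific $p$ at hand, as you in fact use in the body of the argument.)
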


\begin{proof} Let $U' \rightarrow U$ be a finite, connected covering which has prime to $p$ degree over $\eta,$ and 
let $\eta' \in U'$ be the generic point. We have to show that $\ed(Y'/U') \geq i,$ where $Y' = Y\times_UU'.$
Consider the composite map 
$$ H^i(G, \F_p) \rightarrow H^i(\pi_{1,\et}(U,\bar\eta), \F_p) \rightarrow H^i_{\et}(U,\F_p) \rightarrow H^i(\eta, \F_p) \rightarrow H^i(\eta',\F_p).$$
Our assumptions imply that the composite of the first two maps is surjective. By Corollary \ref{cor:goodredn}, 
the third map is nonzero, as $h^{0,i}_{X,D} \neq 0.$ Thus the composite of the first three maps is nonzero. 
 The composite of the fourth map and the  trace map $H^i(\eta',\F_p) \rightarrow H^i(\eta,\F_p)$ is multiplication by $\deg(\eta'/\eta),$ 
 hence injective, as $\eta'/\eta$ has degree prime to $p.$ Thus the final map is an injection and the composite of all four maps is nonzero.
 
Suppose $\ed(Y'/U') < i.$ Then for some dense open $W \subset U',$ there is a map of $C$-schemes $W \rightarrow Z,$ with 
$\dim Z < i,$ and a $G$-cover $Y'_Z \rightarrow Z,$ such that $Y'|_W \simeq Y'_Z \times_ZW$ as $W$-schemes with $G$-action \cite[2.1.4]{FKW}. 
Shrinking $Z$ and $W$ if necessary, we may assume that $Z$ is affine. The above constructions, then give us a commutative diagram 
 
$$\xymatrix{
H^i(G, \F_p) \ar[r]\ar@{=}[d] & H^i_{\et}(Z, \F_p) \ar[d] \\
H^i(G, \F_p) \ar[r] & H^i_{\et}(W, \F_p) \ar[r] & H^i_{\et}(\eta', \F_p)
}$$

Since $Z$ is affine of dimension $< i$ it follows that  $H^i_{\et}(Z, \F_p) = 0$.  This 
implies that the composite of the maps in the bottom 
row is $0.$ This contradicts what we saw above.
\end{proof}

\begin{cor}\label{cor:abvar} Let $X/C$ be an abelian variety of dimension $g.$ Let $p \geq g+2,$ and suppose that $X$ 
has unramified good reduction at $p.$ Let $X' = X,$ viewed as a 
$(\Z/p\Z)^{2g}$-cover of $X$ via the multiplication by $p$ 
map $X' \rightarrow X.$

Then $\ed(X'/X;p)=g.$ In particular, this equality holds for $p\gg 0.$ 
\end{cor}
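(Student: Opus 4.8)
The plan is to deduce this directly from Proposition \ref{prop:essdimlowerbounds} together with the dimension inequality already known, with the only extra input being an identification of the relevant characteristic class map. Here $X' \to X$ is the $(\Z/p\Z)^{2g}$-cover corresponding to the multiplication-by-$p$ isogeny. Since $X$ is a $K(\pi,1)$ in the \'etale sense, and the cover is classified by the surjection $\pi_{1,\et}(X,\bar\eta) \onto (\Z/p\Z)^{2g}$ obtained from $H_1(X,\Z/p\Z)$, the group $G = (\Z/p\Z)^{2g}$ here has the property that the inflation map $H^i(G,\F_p) \to H^i(\pi_{1,\et}(X,\bar\eta),\F_p) \iso H^i_{\et}(X,\F_p)$ is \emph{surjective}: indeed both sides are exterior (for $p$ odd) on $H^1$, and the map is an isomorphism in degree $1$, hence surjective in all degrees. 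This is exactly the surjectivity hypothesis \eqref{eqn:charclassmap} of Proposition \ref{prop:essdimlowerbounds}.

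Next I would record that $h^{0,g}_X = \dim_C H^0(X,\Omega^g_X) = 1 \neq 0$ for an abelian variety of dimension $g$, so the nonvanishing hypothesis on $h^{0,i}$ is satisfied at $i = g$. Since $p \geq g+2$, the abelian-variety case of Proposition \ref{prop:essdimlowerbounds} (which allows $i \leq p-2$, using Corollary \ref{cor:keyrestrict} via the Ansch\"utz--Le Bras computation of $\overline{H^*_{\Prism}(X/A)}$) applies with $i = g$ and gives $\ed(X'/X;p) \geq g$. Combined with the trivial upper bound $\ed(X'/X;p) \leq \ed(X'/X) \leq \dim X = g$, we conclude $\ed(X'/X;p) = g$.

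Finally, for the "almost all $p$" clause: an abelian variety over $C$ is defined over a finitely generated subfield, hence spreads out to an abelian scheme over a finitely generated $\Z$-algebra, so it has good reduction at almost all $p$; arguing as in \ref{para:genericgoodredn} or Corollary \ref{cor:arithmeticrestr}, one can arrange the reduction to be unramified at almost all $p$ (localize a spreading-out $\A$ at a height-one prime lying over $p$ with uniformizer $p$, which exists for almost all $p$). Since also $p \geq g+2$ for all but finitely many $p$, the equality $\ed(X'/X;p) = g$ holds for almost all $p$.

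The only real content beyond invoking earlier results is the surjectivity of $H^*(G,\F_p) \to H^*_{\et}(X,\F_p)$; the main (very mild) obstacle is to make sure this is genuinely surjective in \emph{all} degrees $i \leq g$ and not just degree $1$, which follows since for $p$ odd the cohomology ring of the elementary abelian $p$-group $G = (\Z/p\Z)^{2g}$ surjects onto its exterior subalgebra $\wedge^* H^1(G,\F_p)$, and $H^*_{\et}(X,\F_p) = \wedge^* H^1_{\et}(X,\F_p)$ with the map an isomorphism in degree $1$.
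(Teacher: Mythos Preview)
Your proof is correct and follows essentially the same approach as the paper: verify the surjectivity of the characteristic class map \eqref{eqn:charclassmap} using that $H^*_{\et}(X,\F_p)$ is the exterior algebra on $H^1$, note $h^{0,g}_X = 1 \neq 0$, and invoke the abelian-variety case of Proposition \ref{prop:essdimlowerbounds} with $i=g$; the ``almost all $p$'' clause is handled exactly as in Corollary \ref{cor:arithmeticrestr}. One small imprecision: your sentence ``both sides are exterior (for $p$ odd) on $H^1$'' is not literally true for $H^*(G,\F_p)$, which is exterior tensor polynomial, but you correctly repair this at the end by noting that $H^*(G,\F_p)$ surjects onto its exterior subalgebra, which is all that is needed.
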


\begin{rem}
Corollary~\ref{cor:abvar} resolves almost all of a conjecture of Brosnan \cite[Conj. 6.1]{FS}. Note that \cite[Cor. 6.7]{FS} establishes Brosnan's conjecture for $\dim X\le 3$ and a positive density set of primes (depending on $X$), and one can also deduce the conjecture for a sufficiently generic abelian variety from \cite[Lemma 6.2]{FS}. Prior work of Gabber \cite{CT} established the result for a very general product of elliptic curves.
\end{rem}
\begin{proof}[Proof of Corollary~\ref{cor:abvar}] By definition, $g=\dim X \ge \ed(X'/X;p)$, so it suffices to prove that $\ed(X'/X;p)\ge g$. Let $G = (\Z/p\Z)^{2g}$ be the quotient of $\pi_{1,\et}(X,\bar\eta)$ corresponding to $X' \rightarrow X.$ 
Note that, in our present situation,  the map (\ref{eqn:charclassmap}) is surjective  because 
$H^\bullet_{\et}(X,\F_p)$ is the exterior algebra on $H^1_{\et}(X,\F_p).$ Since $h^{0,g}_X = 1,$ the inequality $\ed(X'/X;p) \geq g$ follows from Proposition \ref{prop:essdimlowerbounds}.

The final claim follows as in the proof of Corollary \ref{cor:arithmeticrestr}, as $X$ has unramified good reduction at all sufficiently large primes $p.$
\end{proof}

\begin{para} We now explain a generalization of Corollary \ref{cor:abvar}, for the mod $p$ homology cover $Y \rightarrow U.$ 
Recall that this is the cover corresponding to the maximal quotient of 
$\pi_{1,\et}(U, \bar\eta)$ which is an elementary abelian $p$-group. 
When $U$ is proper, this is just the pullback to $U$ of the cover described in Corollary \ref{cor:abvar} applied to the Albanese variety of $U.$
We define the invariant $h^{0,1\shortrightarrow i}_{X,D}$ as in \ref{para:albHN}. 
\end{para}

\begin{thm}\label{thm:ellabessdimlowerbounds}  Suppose $(X,D)$ has unramified good reduction at $p,$ 
and that $p > \max\{\dim X +1,3\}.$ 
Then  the mod $p$ homology cover $Y \rightarrow U$ satisfies 
$$ \ed(Y/U; p) \geq \max \{ i: h^{0,1\shortrightarrow i}_{X,D,p} > 0 \}. $$
In particular, if $p \gg 0$ then 
$$ \ed(Y/U; p) \geq  \max \{ i: h^{0,1\shortrightarrow i}_{X,D} > 0 \}. $$
\end{thm}
\begin{proof} As in the proof of Proposition \ref{prop:essdimlowerbounds}, let 
$U' \rightarrow U$ be a finite, connected covering which has prime to $p$ degree over $\eta,$ and let $\eta' \in U'$ be the generic point. 
Let $G = \Gal(Y/U),$ choose $i$ such that $h^{0,1\shortrightarrow i}_{X,D,p} > 0,$ and consider the composite map 
$$ \wedge^i H^1(G, \F_p) \simeq \wedge^i H^1_{\et}(U,\F_p) \rightarrow H^i(\eta, \F_p) \rightarrow H^i(\eta',\F_p).$$
By Corollary \ref{cor:goodredn}, the second map is nonzero, and the last map is injective as $U'$ has degree prime to $p$ over $\eta.$ 
As the composite map factors through $H^i(G,\F_p),$ it follows that 
$$ H^i(G,\F_p) \rightarrow H^i(\eta', \F_p)$$ 
is nonzero, which implies that $\ed(Y/U; p) \geq i,$ as in the proof of Proposition \ref{prop:essdimlowerbounds}.

This proves the first statement, and the second statement follows as for each $i,$  
$ h^{0,1\shortrightarrow i}_{X,D} = h^{0,1\shortrightarrow i}_{X,D,p}$ for $p \gg 0.$
\end{proof}

\begin{para} Suppose that $D$ is empty. Then the quantity $\max \{ i: h^{0,1\shortrightarrow i}_{X,D} > 0 \}$ which appears 
in Theorem \ref{thm:ellabessdimlowerbounds}, is the dimension of the image of $X$ under the Albanese map 
$\alpha_X: X \rightarrow \Alb(X).$ This is called the {\em Albanese dimension} of $U = X.$ 
To see this, note that $H^1(X,\Omega^1_{X/C})$ is isomorphic 
to the global $1$-forms on $\Alb(X),$ that these generate the global $i$-forms on $\Alb(X),$ 
and that $X$ is generically smooth over its image in $\Alb(X),$ as $\text{\rm char}(C) = 0$ 
(cf.~the argument in Proposition \ref{prop:albint} below). 

The following Proposition makes the result for $p \gg 0$ in Theorem \ref{thm:ellabessdimlowerbounds}, effective when $D$ is empty.  
\end{para}

\begin{prop}\label{prop:albint} Let $p$ be a prime of good unramified reduction for $X,$ 
so that $X$ arises from a smooth proper scheme $X^\circ$ over a valuation ring $\O_C$ as in \ref{para:goodredn}. Then 
\begin{enumerate}
\item $\alpha_X$ extends to a map $\alpha_X^\circ: X^\circ \rightarrow \Alb(X)^\circ$ over $\O_C,$ 
with $\Alb(X)^\circ$ an abelian scheme.
\item If the special fiber of $X^\circ$ is generically smooth over its image under $\alpha_X^\circ,$ then 
$$ \ed(Y/U; p) \geq  \dim \, \alpha_X(X),$$
where $Y\to U$ denotes the mod $p$ homology cover as above.
\end{enumerate}
\end{prop}
\begin{proof} Let $A = \Alb(X)^\vee,$ the dual abelian variety. Then $A \simeq \Pic^0_X,$ and any point of $X(C)$ gives rise to 
 a Poincar\'e line bundle $\L \rightarrow X \times A.$
By \cite[Thm.~2]{Koizumi},  $\Alb(X)$ and $A$ extend to abelian schemes $\Alb(X)^\circ$ and $A^\circ$ over $\O_C.$ 
More precisely, $X^\circ$ descends to a discrete valuation ring $\O$ as in \ref{para:goodredn}, and one may apply {\em loc.~cit} 
to this descent. 

As $X^\circ \times A^\circ$  is regular, $\L$ extends to a line bundle $\L^\circ \rightarrow X^\circ \times A^\circ,$ which corresponds to a map $$ X^\circ \rightarrow \Pic^0_{A^\circ/\O_C} \simeq A^{\circ \vee} \simeq \Alb(X)^\circ, $$
where $A^{\circ \vee}$ denotes the dual abelian scheme of $A^{\circ}.$ This proves the first claim.

As before denote by $k$ the residue field of $\O_C,$ and let $i = \dim \alpha_X(X).$ 
Suppose that the special fiber of $X^\circ$ is generically smooth over its image under $\alpha_X^\circ.$ 
Let $\eta$ be the generic point of $\alpha^{\circ}_X(X^\circ_k),$ and $\kappa(\eta)$ the residue field at $\eta.$ 
Then 
$$ \wedge^i H^0(A^{\circ\vee}, \Omega^1_{A^{\circ\vee}_k/k}) \simeq H^0(A^{\circ\vee}, \Omega^i_{A^{\circ\vee}_k/k}) $$
generates the $\kappa(\eta)$-vector space $\Omega^i_{\kappa(\eta)/k} \neq 0.$ As $X^\circ_k$ is smooth over $\eta,$ 
$\Omega^i_{\kappa(\eta)/k} \rightarrow \Omega^i_{X^\circ_k/k}\otimes_{\O_{X^\circ}}\kappa(\eta)$ is injective. 
In particular, this implies that 
$h^{0,1\shortrightarrow i}_{X,D,p} > 0,$ and the second claim follows from Theorem \ref{thm:ellabessdimlowerbounds}.
\end{proof}

\begin{para} We say that $\alpha_X$ is unramified at $p$ if $\O_C$ and $X^\circ$ can be chosen 
so that the special fiber of $X^\circ$ is generically \'etale over its image under $\alpha_X^\circ.$ 
Recall that $X$ is said to have {\em maximal Albanese dimension} if its Albanese dimension is equal to $\dim X.$ 
Then we have the following.
\end{para}

\begin{cor}\label{cor:ellabessdimlowerbounds} Suppose $X$ is a smooth, proper $C$-scheme of maximal Albanese dimension.
If $p$ is a prime of unramified good reduction for $X$ at which $\alpha_X$ is unramified, and $Y\to X$ is the mod $p$ homology cover, then we have 
\[\ed(Y/X; p) = \dim X.\]
In particular, this holds for $p \gg 0.$
\end{cor}
\begin{proof} This follows from Proposition \ref{prop:albint}. \end{proof}

\section{Torus embeddings and Shimura varieties}

In this section, we use the results above to obtain lower bounds on the $p$-essential dimension of certain coverings that arise naturally in the context of torus embeddings and Shimura varieties. We assume that $p > 2$ throughout this section, so that $\F_p^\times$ is nontrivial.\footnote{This will allow us to deduce results from an analogue of ``weights'' with $\F_p^\times$ playing the role of $\mathbb{G}_m$.} In \S 3.1, we compute the cohomology of certain generalized finite Heisenberg groups. In \S 3.2 we consider certain torus bundles over abelian varieties. These have covers 
whose groups are the finite Heisenberg groups, and we use the results of \S 3.1 and Proposition \ref{prop:essdimlowerbounds} 
to show that these covers are $p$-incompressible. We then show a variant of this $p$-incompressibility result, 
where we restrict covers to analytic (not just Zariski) neighborhoods of the boundary in these torus bundles. Finally, in \S 3.3, 
we apply this last result to show $p$-incompressibility of certain congruence covers.

\subsection{Cohomology of generalized Heisenberg groups}\label{subsec:cohcenextns}
\begin{para} In this subsection, we will be concerned with central extensions of abelian groups, which are either finitely generated
free abelian pro-$p$-groups , or elementary abelian $p$-groups. We begin by considering a central extension of elementary abelian $p$-groups  
$$ 0 \rightarrow N \rightarrow E \rightarrow H \rightarrow 0.$$
Such extensions are classified by the cohomology group \cite[\S 10]{BensonCarlson}
$$ H^2(H,N) \simeq H^2(H, \F_p)\otimes N \simeq \wedge^2 H^{*}\otimes N \oplus H^{*}\otimes N . $$
Here the term $\wedge^2 H^{*}$ is the image of $H^1(H,\F_p)^{\otimes 2} = H^{*\otimes 2}$ in $H^2(H,\F_p)$ under the cup 
product, while the $H^{*}$ in the second term is the image of the Bockstein map 
$H^1(H,\F_p) \rightarrow H^2(H,\F_p)$ (see also \cite[Ch.~II., Cor.~4.3]{AdemMilgram}). 

Recall that if $c \in Z^2(H,N)$ is a $2$-cocycle, the corresponding central extension $E$ is defined by taking the underlying set of $E$ to 
be $N\times H$ with the group law given by 
\[(n_1,h_1)\cdot(n_2,h_2) = (n_1+n_2 + c(h_1,h_2), h_1+h_2).\] 
We call the extension $E$ a {\em finite Heisenberg group} \footnote{We remark that this is a small abuse of terminology, as this name is often reserved for the case when $\dim_{\F_p} N = 1.$} (or just {\em Heisenberg group} if the context is clear) if its class in $H^2(H,N)$ is represented by an alternating bilinear form 
$ H \otimes H \rightarrow N.$ 

For an $\F_p$-vector space $V$ with an action of $\F_p^\times,$ we say that {\em $V$ has weight 
$n \in \Z/(p-1)\Z$} if $\alpha \in \F_p^\times$ acts by $\alpha^{-n}.$ In general, we denote by $V_n \subset V$ the direct summand of weight $n.$
The following lemma gives a number of characterizations of Heisenberg groups, as well as showing that this property depends only on $E$ 
as a group, and not as an extension.
\end{para}

\begin{lem}\label{lem:Heisenbergcondns} The following conditions on the extension $E$ are equivalent: 
\begin{enumerate}[label=(\roman*)]
\item $E$ is a finite Heisenberg group.
\item The class $c \in H^2(H,N)$ defining $E$ is contained in $\wedge^2 H^{*}\otimes N .$
\item $E$ has exponent $p.$
\item The extension $E$ admits an action of $\F_p^\times,$ such that $H$ has weight $-1$ and $N$ has weight $-2.$
\end{enumerate}
\end{lem}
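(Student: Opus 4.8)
The plan is to prove the equivalence of (i)--(iv) by a cycle of implications, working over $\F_p$ throughout and using the standard description of $H^2(H,N)$ recalled just before the lemma, namely $H^2(H,N) \simeq (\wedge^2 H^* \otimes N) \oplus (H^* \otimes N)$, where the first summand is the image of the cup product and the second is the image of the Bockstein. First I would establish (i) $\Leftrightarrow$ (ii): by definition $E$ is Heisenberg when its class is represented by an alternating bilinear form $H \otimes H \to N$, and such a form is precisely a class in $\wedge^2 H^* \otimes N$; conversely any class in that summand is represented by an alternating $2$-cocycle, so one can take the bilinear form to be that cocycle (antisymmetrizing if needed, which is harmless since $p \neq 2$). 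The condition $p > 2$ in force throughout \S3 is what makes ``alternating'' and ``the image of $\wedge^2$'' the same thing.

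Next I would prove (ii) $\Leftrightarrow$ (iii). Given a cocycle $c$ representing the class, the $p$-th power of $(n,h) \in E$ can be computed directly from the group law $(n_1,h_1)\cdot(n_2,h_2) = (n_1+n_2+c(h_1,h_2),h_1+h_2)$; since $H$ is elementary abelian the $H$-component of $(n,h)^p$ is $ph = 0$, and the $N$-component works out to $pn + \binom{p}{2} c(h,h) + (\text{sum of } c(jh, h) \text{ terms})$, which simplifies to $\frac{p(p-1)}{2}\,s(h)$ where $s$ is the symmetric part of $c$ (for $p>2$ the term $\binom p2 c(h,h)$ contributes, but the whole expression collapses to a multiple of the symmetrization). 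The upshot is that $E$ has exponent $p$ iff the symmetrization of $c$ is a coboundary, i.e.\ iff the image of the class under the projection $H^2(H,N) \to H^* \otimes N$ (the Bockstein part, which records exactly this symmetric obstruction) vanishes; that is condition (ii). This step, the explicit exponent computation and its identification with the Bockstein component, is where I expect the main technical care to be needed — one has to be careful that the coefficient $\frac{p(p-1)}{2}$ is a unit times the relevant residue and that one is correctly matching ``symmetric part of the cocycle'' with ``Bockstein summand.''

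Finally I would handle (ii)/(i) $\Leftrightarrow$ (iv). If the class lies in $\wedge^2 H^* \otimes N$, pick an alternating bilinear cocycle $c$ and define an $\F_p^\times$-action on $E = N \times H$ by $\alpha \cdot (n,h) = (\alpha^2 n, \alpha h)$; since $c$ is bilinear, $c(\alpha h_1, \alpha h_2) = \alpha^2 c(h_1,h_2)$, so this is a group automorphism, and by the weight conventions ($\alpha$ acts on a weight-$n$ space by $\alpha^{-n}$) the subspace $H$ has weight $-1$ and $N$ has weight $-2$, giving (iv). Conversely, suppose $E$ carries such an action. Then $N$ and $H$ sit in distinct weight spaces ($-2$ versus $-1$, which are distinct in $\Z/(p-1)\Z$ because $p > 3$; for $p = 3$ one argues separately or notes the hypothesis $p>2$ together with the fact that $-2 \equiv 0$, $-1 \equiv 1$ are still distinct mod $2$), so a choice of $\F_p^\times$-equivariant splitting of the underlying vector-space sequence $0 \to N \to E \to H \to 0$ lets us write the cocycle $c$ in coordinates compatible with the action; equivariance forces $c(\alpha h_1, \alpha h_2) = \alpha^2 c(h_1, h_2)$ for all $\alpha$, and since a function $H \times H \to N$ that scales by $\alpha^2$ under simultaneous scaling of both arguments by $\alpha$ must be bilinear (this uses that $\F_p^\times$ has enough elements, i.e.\ $p$ is large enough — e.g.\ $p \geq 5$ makes the Vandermonde argument work, and small $p$ are checked by hand), we conclude $c$ is bilinear, hence its class lies in $\wedge^2 H^* \otimes N$, which is (ii). Assembling the cycle (i)$\Rightarrow$(ii)$\Rightarrow$(iii)$\Rightarrow$(ii)$\Rightarrow$(iv)$\Rightarrow$(ii)$\Rightarrow$(i) — or more cleanly, proving (i)$\Leftrightarrow$(ii), (ii)$\Leftrightarrow$(iii), (ii)$\Leftrightarrow$(iv) — completes the proof. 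The last clause of the lemma, that the property depends only on $E$ as an abstract group, then follows because conditions (iii) (being of exponent $p$) makes no reference to the extension structure at all.
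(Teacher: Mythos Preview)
Your (iv)$\Rightarrow$(ii) step contains a real gap. After choosing an $\F_p^\times$-equivariant set-theoretic section (which does exist: the set of sections is a torsor under the $\F_p$-vector space $\text{Maps}(H,N)$, and $|\F_p^\times|$ is prime to $p$), you correctly obtain $c(\alpha h_1,\alpha h_2)=\alpha^2 c(h_1,h_2)$, but this does \emph{not} force $c$ to be bilinear. Weights are only defined modulo $p-1$, so for instance $f(x,y)=x^2y^{p-1}$ on $\F_p\times\F_p$ satisfies the same scaling identity without being bilinear, and no Vandermonde argument changes this. What the scaling condition \emph{does} buy you is that the cohomology class $[c]$ is fixed by the induced $\F_p^\times$-action on $H^2(H,N)$; since the summand $\wedge^2H^*\otimes N$ has weight $0$ and $H^*\otimes N$ has weight $-1$, and these are distinct for $p>2$, the invariant part is exactly $\wedge^2H^*\otimes N$. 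That repairs your argument. The paper takes a different and shorter route, proving (iv)$\Rightarrow$(iii) instead: the $p$-th power on $E$ factors as a map $H\to N$ which, being natural in $E$, is $\F_p^\times$-equivariant; the distinct weights on $H$ and $N$ then force it to vanish. This avoids choosing any cocycle at all.

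Your (ii)$\Leftrightarrow$(iii) is correct in outline --- the $p$-th power map $h\mapsto(0,h)^p$ really does recover the $\text{Ext}^1(H,N)\simeq H^*\otimes N$ component of the extension class --- but the displayed computation is garbled. The formula $(0,h)^p=\big(\sum_{j=1}^{p-1}c(jh,h),0\big)$ does not simplify to anything involving $\binom{p}{2}c(h,h)$ unless $c$ is already bilinear, which is part of what you are trying to prove; for a Bockstein-type cocycle $c(jh,h)$ is a carry digit, not $j\cdot c(h,h)$. You need either to first pick a representative of the form (alternating bilinear) $+$ (explicit carry cocycle) and compute on each piece separately, or to argue structurally. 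The paper does the latter for (iii)$\Rightarrow$(ii): having already observed (i)$\Rightarrow$(iii), it subtracts the alternating part via a Baer sum to reduce to $[c]\in H^*\otimes N$, then applies linear forms $H^*\to\F_p$ and $N\to\F_p$ to push out to a nonsplit extension of $\F_p$ by $\F_p$, which must be $\Z/p^2\Z$ --- contradicting exponent $p$.
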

\begin{proof} {\em (ii) $\Rightarrow$ (i)}: Suppose $c$ satisfies {\em (ii)}. Write $c = \sum_{i=1}^k (\alpha_i \cup \beta_i )\otimes n_i\in Z^2(H,N),$ 
where $\alpha_i,\beta_i \in Z^1(H,\F_p)$ and $n_i \in N_i.$ Then $c$ is also represented by 
$$ \frac{1}{2} \sum_{i=1}^k (\alpha_i \cup \beta_i -  \beta_i \cup\alpha_i) \otimes n_i,$$
which is an $N$-valued alternating form.

{\em (i) $\Rightarrow$ (iv)}: Recall that, as a set, $E$ is identified with $N\times H.$ Define the $\F_p^\times$ action on $E$ by 
$\alpha\cdot (n,h) = (\alpha^2 n, \alpha h).$ This evidently induces an action of $\F_p^\times$ on $E$ as a set, and 
from the explicit description of the group law on $E$ one sees that this action respects the group structure 
(cf.~\cite[Prop.~6]{PeyreUnramified}).

{\em (iv) $\Rightarrow$ (iii)}: Consider the multiplication by $p$ map on $E.$ This gives a map $H \rightarrow N$ which commutes with any 
automorphism of $E$. In particular it commutes with the $\F_p^\times$-action, but $H$ and $N$ have distinct weights, so this map is $0.$

{\em (iii) $\Rightarrow$ (ii)}: First note that {\em (i) $\Rightarrow$ (iii)}, as, if $c$ is alternating, then $c(nh,h) = 0$ for any positive integer $n.$  
Now suppose $E$ has exponent $p,$ and let $c \in H^2(H,N)$ be its class.
Write $c = a + b$ in {\em (ii)} with $a \in \wedge^2 H^{*}\otimes N,$ and $b \in  H^{*}\otimes N .$ By what we just saw, 
the extension $E_{-a}$ corresponding to $-a$ has exponent $p.$ Replacing $E$ by the Baer sum of $E_{-a}$ and $E,$ we may 
assume that $c \in H^{*}\otimes N .$ If $c \neq 0,$ there exist linear forms $s:H^{*} \rightarrow \F_p$ and $t: N \rightarrow \F_p$ such that 
$s\otimes t (c) \in \F_p \simeq H^2(\F_p,\F_p)$ is nonzero. If $E$ has exponent $p$ then so does the extension $E_{s,t}$ obtained by 
pulling back and pushing out $E$ by $s$ and $t$ respectively. However $E_{s,t}$ corresponds to $s\otimes t (c)$ which is in the 
image of the Bockstein map, and hence  $E_{s,t} \simeq  \Z/p^2\Z,$ which is a contradiction. Hence $c=0,$ which implies {\em (ii)}. 
\end{proof}

\begin{para}\label{para:defnredn}
We now change our setup, and consider a central extension 
\begin{equation}\label{e:Heisext}
	0 \rightarrow N \rightarrow E \rightarrow H \rightarrow 0 
\end{equation}
where $N$ and $H$ are finitely-generated, free abelian pro-$p$ groups. 
That is $N \simeq  \Z_p^r$ and $H \simeq  \Z_p^s$ for some $r,s.$ 

For a finitely-generated free $\Z_p$-module $V,$ equipped with an action of $\Z_p^\times,$ we say that $V$ has weight 
$n \in \mathbb Z$ if $\alpha \in \Z_p^\times$ acts by $\alpha^{-n}$ on $V.$ Of course this notion makes sense for any 
character of $\Z_p^\times,$ but we will not need it in this generality.
\end{para}

\begin{lem}\label{lem:padicHeisenberg} For $E$ as in \eqref{e:Heisext} the following hold: 
\begin{enumerate}[label=(\roman*)] 
\item We have $H^2(H,N)\simeq \Lambda^2_{\Z_p}H^*\otimes N,$ so that 
the class of $E$ is represented by an alternating bilinear form.
\item There exists a $\Z_p^\times$ action on $E$ such that $H$ has weight $-1$ and $N$ has weight $-2.$
\end{enumerate}
\end{lem}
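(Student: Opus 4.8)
The plan is to imitate the proof of Lemma~\ref{lem:Heisenbergcondns}: the only difference between the present free abelian situation and the elementary abelian $p$-group case treated there is that $\Z_p$ is torsion-free, so there is no Bockstein summand and all of $H^2(H,N)$ is the ``cup product part''.

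\emph{Part (i).} First I would record the structure of the continuous cohomology of $H\simeq\Z_p^s$: since $H^i(\Z_p,\Z_p)$ is free of rank $1$ for $i=0,1$ and zero otherwise, the Künneth formula (no Tor correction, as everything is $\Z_p$-free) gives that $H^*(H,\Z_p)$ is the exterior algebra on $H^1(H,\Z_p)=\Hom(H,\Z_p)$ placed in degree $1$. Consequently, for the finite free $\Z_p$-module $N$ with trivial $H$-action, cup product induces an isomorphism
$$
\Lambda^2_{\Z_p}\Hom(H,\Z_p)\otimes_{\Z_p}N \ \iso\ H^2(H,N),
$$
and the source is exactly the group of alternating $N$-valued bilinear forms on $H$. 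Thus the class of $E$ is represented by such a form; concretely, writing a continuous $2$-cocycle representative as $c=\sum_i(\varphi_i\cup\psi_i)\otimes n_i$ with $\varphi_i,\psi_i\in\Hom(H,\Z_p)$ and $n_i\in N$, its antisymmetrization $\tfrac12\sum_i(\varphi_i\cup\psi_i-\psi_i\cup\varphi_i)\otimes n_i$ (legitimate since $p>2$) is a cohomologous alternating bilinear form. This is verbatim the argument for (ii)$\Rightarrow$(i) in Lemma~\ref{lem:Heisenbergcondns}.

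\emph{Part (ii).} By part (i) I may realize $E$ as the set $N\times H$ with group law $(n_1,h_1)\cdot(n_2,h_2)=(n_1+n_2+c(h_1,h_2),\,h_1+h_2)$ for an alternating $\Z_p$-bilinear form $c\colon H\times H\to N$. Define, for $\alpha\in\Z_p^\times$,
$$
\alpha\cdot(n,h)=(\alpha^2 n,\ \alpha h).
$$
This is a continuous action of $\Z_p^\times$ on the underlying space; it respects the group law precisely because $c$ is bilinear, i.e.\ $c(\alpha h_1,\alpha h_2)=\alpha^2 c(h_1,h_2)$, so it is an action by continuous automorphisms of $E$, and by the convention of \ref{para:defnredn} it gives $H$ weight $-1$ and $N$ weight $-2$. (All continuity statements are automatic, $N$ and $H$ being finite free $\Z_p$-modules.) This is the exact analogue of (i)$\Rightarrow$(iv) in Lemma~\ref{lem:Heisenbergcondns}.

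The only step with genuine content is the cohomological input to (i): that for a free abelian pro-$p$ group $H^2$ has no summand beyond $\Lambda^2$ of $H^1$, which is just the torsion-freeness of $\Z_p$, in contrast to the elementary abelian case recalled just before Lemma~\ref{lem:Heisenbergcondns}. Granting that, both assertions follow formally and I expect no further obstacles.
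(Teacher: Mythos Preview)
Your proposal is correct and follows essentially the same approach as the paper. The paper's own proof is a two-line remark that (i) is analogous to (ii)$\Rightarrow$(i) and (ii) is analogous to (i)$\Rightarrow$(iv) in Lemma~\ref{lem:Heisenbergcondns}; you have simply unpacked these analogies, and in particular made explicit the one point the paper leaves to the reader, namely that $H^2(H,\Z_p)=\wedge^2 H^1(H,\Z_p)$ so there is no Bockstein summand and condition (ii) of Lemma~\ref{lem:Heisenbergcondns} is automatic.
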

\begin{proof} The proof of {\em (i)} is analogous to the proof of {\em (ii) $\Rightarrow$ (i)} in Lemma \ref{lem:Heisenbergcondns}, 
and the proof of {\em (ii)} is then analogous to {\em (i) $\Rightarrow$ (iv)} in that lemma.
\end{proof}

\begin{lem}\label{lem:degenpadic} The Hochschild-Serre spectral sequence 
%
$$ E_2^{i,j} = H^i(H, H^j(N, \F_p)) \Rightarrow H^{i+j}(E,\F_p) $$
degenerates at the $E_3$-page 
if $p \geq \min\{\rk_{\Z_p} N+1, \rk_{\Z_p} H\}.$
\end{lem}
\begin{proof} The $\Z_p^\times$-action on $E$ constructed in Lemma \ref{lem:padicHeisenberg} acts compatibly on the spectral sequence, 
and induces an action of $\F_p^\times$ on the terms. 
The weights of this  $\F_p^\times$-action are elements of $\Z/(p-1)\Z.$ 

The source and target of $d_r:E^{i,j}_r \rightarrow E^{i+r,j-r+1}_r$ have weights $i+2j$ and $i+2j +(2-r)$ respectively. Thus, if $r = 3, \dots, p,$  
these weights are not equal and $d_r = 0.$ If $r \geq p+1,$ then our assumption on $p$ forces $d_r$ to be $0,$ 
as $E^{i,j}_r = 0$ unless $0 \leq i \leq \rk_{\Z_p} H,$ $0 \leq j \leq \rk_{\Z_p} N.$
\end{proof}

\begin{para} \label{para:modp}
A  {\em reduction mod $p$} of the extension $E$ of \eqref{e:Heisext} is a map of central extensions 
$$\xymatrix{
0  \ar[r] & N \ar[r]\ar[d] & E \ar[r]\ar[d] & H \ar[r]\ar[d] & 0 \\
0  \ar[r] & \bar N \ar[r]& \bar E \ar[r] & \bar H \ar[r] & 0
}
$$
which identifies $\bar N$ and $\bar H$ with $N/pN$ and $H/pH$ respectively, and such that $\bar E$ is a finite Heisenberg group. 

Below we will repeatedly use that for a finitely-generated, free abelian pro-$p$ group $F,$ 
and a finitely-generated elementary abelian group with trivial $F$-action $M,$ 
\[H^\bullet(F,M)=(\wedge^\bullet H^1(F,\F_p))\otimes M.\]
This is elementary (see e.g. \cite[Ch. 3.2, Example 1 and Ch. 3.3, Prop.14]{Serre}) and follows from K\"unneth for $M=\F_p$, and the universal coefficient theorem for general $M$.
\end{para}

\begin{lem}\label{lem:existredn} Any central extension $E,$ as in \eqref{e:Heisext} admits a mod $p$ 
reduction $\bar E.$ The extension class of $\bar E$ is uniquely determined by that of $E.$
\end{lem}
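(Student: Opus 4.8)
The statement has two parts: existence of a mod $p$ reduction $\bar E$, and uniqueness of its extension class. I would treat existence first. The extension $E$ in \eqref{e:Heisext} is classified, by Lemma \ref{lem:padicHeisenberg}(i), by a class in $H^2(H,N)$ represented by an alternating bilinear form $\psi\colon H\otimes H\to N$. Reducing $\psi$ modulo $p$ in each variable gives an alternating form $\bar\psi\colon \bar H\otimes\bar H\to\bar N$ where $\bar H=H/pH$ and $\bar N=N/pN$; the corresponding central extension $\bar E$ is, by the very definition recalled before Lemma \ref{lem:Heisenbergcondns}, a finite Heisenberg group. So I would build $\bar E$ as the central extension of $\bar H$ by $\bar N$ attached to the $2$-cocycle obtained by composing $\psi$ (viewed as a $2$-cocycle on $H$ valued in $N$) with the projections $H\twoheadrightarrow\bar H$ and $N\twoheadrightarrow\bar N$. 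The maps $E\to\bar E$, $N\to\bar N$, $H\to\bar H$ are then the evident ones, and one checks compatibility of the group laws directly from the cocycle formula $(n_1,h_1)\cdot(n_2,h_2)=(n_1+n_2+c(h_1,h_2),h_1+h_2)$. This gives a genuine map of central extensions inducing $N/pN\xrightarrow{\sim}\bar N$ and $H/pH\xrightarrow{\sim}\bar H$, which is exactly a mod $p$ reduction.

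\textbf{Uniqueness.} For the second claim, I would argue that the extension class of any mod $p$ reduction $\bar E$ is forced by that of $E$. Given a map of central extensions as in the diagram defining ``reduction mod $p$,'' functoriality of $H^2(-,-)$ produces a commutative square relating the class of $E$ in $H^2(H,N)$ to the class of $\bar E$ in $H^2(\bar H,\bar N)=H^2(H/pH,N/pN)$: the class of $\bar E$ is the image of the class of $E$ under the natural map $H^2(H,N)\to H^2(H/pH,N/pN)$ induced by $H\to H/pH$ and $N\to N/pN$. Concretely, using the decomposition $H^2(\bar H,\bar N)\simeq \wedge^2\bar H^*\otimes\bar N\oplus \bar H^*\otimes\bar N$ from the discussion before Lemma \ref{lem:Heisenbergcondns}, and the fact that for the free abelian pro-$p$ group $H$ we have $H^2(H,N)=\wedge^2 H^1(H,\F_p)\text{-analogue}$, i.e. $H^2(H,N)=(\wedge^2 H^*)\otimes N$ with $H^*$ the $\Z_p$-dual of $H$ (there is no Bockstein term since $H^*(H,\Z_p)$ is exterior), the reduction map lands in the $\wedge^2\bar H^*\otimes\bar N$ summand, and is simply reduction mod $p$ of the alternating form. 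Since the map $H^2(H,N)\to H^2(\bar H,\bar N)$ is canonical and independent of any choices, the class of $\bar E$ depends only on that of $E$. I would phrase this via the commutativity of the square coming from the morphism of extensions, so no explicit cocycle manipulation beyond what is needed to identify $H^2$ of the pro-$p$ groups.

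\textbf{Main obstacle.} The only genuine subtlety is making sure the bookkeeping around $H^2$ of a \emph{pro-$p$} free abelian group is done correctly: one needs that $H^2(H,N)$ for $H\simeq\Z_p^s$, $N\simeq\Z_p^r$ is $(\wedge^2 H^*)\otimes N$ with continuous cochains (using $H^*(H,\Z_p)=\wedge^* H^*$, as cited via \cite[Thm.~6.4]{Brown} in the proof of Lemma \ref{lem:degenpadic}), so that ``represented by an alternating bilinear form'' from Lemma \ref{lem:padicHeisenberg}(i) can be reduced termwise, and that this reduction matches the $\wedge^2\bar H^*\otimes\bar N$ piece of $H^2(\bar H,\bar N)$ rather than picking up a Bockstein contribution. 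Once that identification is in place, both existence and uniqueness are immediate from functoriality, so I expect the proof to be short; the bulk of the writing is pinning down these cohomology groups and checking the morphism of extensions is well-defined.
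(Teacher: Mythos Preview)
Your existence argument is correct and somewhat more explicit than the paper's: you represent $[E]$ by an alternating $\Z_p$-bilinear form $\psi$ via Lemma~\ref{lem:padicHeisenberg}(i), reduce it mod~$p$, and write down the map $E\to\bar E$ at the level of cocycles. The paper instead pushes out $E$ along $N\to\bar N$ and then argues that the resulting class in $H^2(H,\bar N)$ comes from the Heisenberg summand of $H^2(\bar H,\bar N)$; this is the same construction in different clothes.

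Your uniqueness argument, however, has a gap. You invoke a ``natural map $H^2(H,N)\to H^2(\bar H,\bar N)$ induced by $H\to\bar H$ and $N\to\bar N$,'' but no such map exists: group cohomology is \emph{contravariant} in the group, so the surjection $H\twoheadrightarrow\bar H$ gives inflation $H^2(\bar H,\bar N)\to H^2(H,\bar N)$, not a map the other way. What the morphism of central extensions actually yields is that the pushout of $[E]$ along $N\to\bar N$ and the inflation of $[\bar E]$ along $H\to\bar H$ agree as elements of $H^2(H,\bar N)$. To deduce that $[\bar E]$ is determined, you still need that inflation is \emph{injective} on the Heisenberg summand $\wedge^2\bar H^*\otimes\bar N\subset H^2(\bar H,\bar N)$. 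This is precisely the point the paper isolates: since $H^1(\bar H,\F_p)\to H^1(H,\F_p)$ is a bijection, the composite
\[
\wedge^2 H^1(\bar H,\F_p)\otimes\bar N \longrightarrow H^2(\bar H,\bar N) \longrightarrow H^2(H,\bar N)\simeq \wedge^2 H^1(H,\F_p)\otimes\bar N
\]
is a bijection, and this single observation delivers existence and uniqueness simultaneously. You have all the ingredients for this (you note $H^*(H,\Z_p)$ is exterior and that there is no Bockstein term on the $H$ side), so inserting this step repairs your argument; but as written, the appeal to a nonexistent ``reduction map'' in cohomology does not establish uniqueness.
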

\begin{proof} Let $\bar N = N/pN,$ $\bar H = H/pH.$ The map 
$H^1(\bar H, \bar N) \rightarrow H^1(H, \bar N)$ is a bijection. 
Hence the composite 
$$ \wedge^2 H^1(\bar H, \F_p)\otimes \bar N \rightarrow H^2(\bar H, \bar N) \rightarrow H^2(H, \bar N) \simeq \wedge^2 H^1(H, \F_p)\otimes 
\bar N .$$ 
is again a bijection. This implies that the pushout of $E$ by $N \rightarrow \bar N$ arises from an extension $\bar E,$ 
whose class lies in the image of the first map above, and this class is uniquely determined by the class of $E.$
\end{proof}

\begin{prop}\label{cohredn} Let $E$ be as  in \eqref{e:Heisext}, and let $\bar E$ be a mod $p$ reduction of $E.$ 
If $p > \frac{1}{2}(\rk_{\Z_p} H +  3 \,\rk_{\Z_p} N+1)$, then the map of cohomology rings $H^\bullet(\bar E, \F_p) \rightarrow H^\bullet(E, \F_p)$ is surjective.
\end{prop}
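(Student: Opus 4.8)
The plan is to compare the Hochschild–Serre spectral sequences for $E$ and $\bar E$ and exploit the fact, established in Lemma \ref{lem:degenpadic}, that both degenerate at $E_3$ in the relevant range. First I would observe that the vertical maps in the reduction-mod-$p$ diagram induce a morphism of spectral sequences
\[
{}'E_2^{i,j} = H^i(\bar H, H^j(\bar N, \F_p)) \longrightarrow H^i(H, H^j(N, \F_p)) = E_2^{i,j},
\]
converging to $H^*(\bar E, \F_p) \to H^*(E, \F_p)$. Since $\bar H = H/pH$, $\bar N = N/pN$, and the cohomology of a free abelian pro-$p$ group with $\F_p$-coefficients is computed as an exterior algebra on the same $\F_p$-vector space as for its mod $p$ reduction, the map on $E_2$-pages is an \emph{isomorphism}: indeed $H^i(\bar H, \F_p) = \wedge^i H^1(\bar H, \F_p) \xrightarrow{\sim} \wedge^i H^1(H, \F_p) = H^i(H, \F_p)$ and likewise in the $N$-direction, using that $H^1(\bar H, \bar N) \to H^1(H,\bar N)$ is a bijection (as in Lemma \ref{lem:existredn}).

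Next I would check that under the hypothesis $p > \frac{1}{2}(\rk_\Z H + 3\,\rk_\Z N + 1)$ both spectral sequences degenerate at $E_3$, so that the only possibly nonzero differential is $d_2$. The condition needed in Lemma \ref{lem:degenpadic} is $p \geq \min\{\rk_\Z N + 1, \rk_\Z H\}$; I would verify that our bound implies this (the weighted bound is visibly stronger), and moreover that it controls the $d_2$-differentials. The key point is that $d_2 \colon E_2^{i,j} \to E_2^{i+2,j-1}$ is the same map for $E$ as for $\bar E$ under the $E_2$-isomorphism — this is because $d_2$ is cup product with the extension class in $H^2(H,N)$ (respectively $H^2(\bar H, \bar N)$), and by Lemma \ref{lem:existredn} the class of $\bar E$ is precisely the reduction of the class of $E$, compatibly with the identifications on $E_2$. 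Hence the morphism of spectral sequences is an isomorphism on $E_2$, commutes with $d_2$, and therefore is an isomorphism on $E_3 = E_\infty$.

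Finally, from the isomorphism on $E_\infty$-pages I would conclude that $H^*(\bar E, \F_p) \to H^*(E, \F_p)$ is an isomorphism of associated graded groups for the Hochschild–Serre filtrations, which are bounded and compatible under the map of extensions; a standard filtered-module argument then gives that the map on $H^*$ is itself an isomorphism, in particular surjective. I expect the main obstacle to be the bookkeeping in the second paragraph: one must pin down the exact range in which the degeneration holds and confirm that the weighted bound on $p$ in the proposition statement (rather than the cruder $p \geq \min\{\rk_\Z N+1, \rk_\Z H\}$ of Lemma \ref{lem:degenpadic}) is what is actually forced — presumably the sharper bound is needed to ensure $d_2$ itself behaves uniformly, or to handle the weights of the relevant $E_2^{i,j}$ terms that survive. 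I would also need to be slightly careful that $N$ and $H$ may have different ranks, so the vanishing ranges for $i$ and $j$ are governed by $\rk_\Z H$ and $\rk_\Z N$ separately, and the factor of $3$ in the bound on $p$ is presumably accounting for the combined contribution $i + 2j$ of the worst-case surviving class together with the range $j \leq \rk_\Z N$.
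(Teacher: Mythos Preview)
Your proposal contains a fundamental error: the $E_2$-pages of the two Hochschild--Serre spectral sequences are \emph{not} isomorphic. For the pro-$p$ extension $E$, the $E_2$-term is indeed an exterior algebra, $E_2^{i,j} \simeq \wedge^i \bar H^* \otimes \wedge^j \bar N^*$. But for the elementary abelian $p$-group $\bar H$, the cohomology ring $H^*(\bar H,\F_p)$ is $\wedge^\bullet \bar H^* \otimes \Sym^\bullet \bar H^*$, the tensor product of an exterior algebra in degree $1$ and a polynomial algebra in degree $2$ (and similarly for $\bar N$). So $\bar E_2^{i,j}$ is strictly larger than $E_2^{i,j}$; the map $\bar E_2^{i,j} \to E_2^{i,j}$ is a surjection, not an isomorphism. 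Consequently Lemma~\ref{lem:degenpadic} says nothing about the spectral sequence for $\bar E$, and in fact that spectral sequence does \emph{not} degenerate at $E_3$: the polynomial classes support nontrivial higher differentials.

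The paper's argument addresses exactly this. It identifies the exterior subalgebra $\tilde E_2^{\bullet,\bullet} \subset \bar E_2^{\bullet,\bullet}$ that splits the projection to $E_2^{\bullet,\bullet}$, and uses the Heisenberg condition on $\bar E$ (Lemma~\ref{lem:Heisenbergcondns}) to see that $d_2$ preserves this subcomplex. The remaining work is to show that the surjection $\bar E_r^{i,j} \to E_r^{i,j}$ persists for $r \geq 3$ in the relevant range $0 \leq i \leq \rk H$, $0 \leq j \leq \rk N$. This is done via the $\F_p^\times$-action: the weights appearing in $\bar E_2^{i,j}$ lie in the interval $[\tfrac{1}{2}(i+2j), i+2j]$, and the hypothesis $p > \tfrac{1}{2}(\rk H + 3\rk N + 1)$ is precisely what guarantees that the weight-$(i+2j)$ piece (which is $\tilde E^{i,j}$) is unambiguously cut out and that higher differentials out of it vanish for weight reasons. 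Your proposal recognizes that something like this weight bookkeeping must explain the bound on $p$, but the actual mechanism is needed to compensate for the failure of $E_3$-degeneration on the $\bar E$ side, not merely to sharpen it.
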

\begin{proof} Recall the Hochschild-Serre spectral sequence, which converges to the mod $p$ cohomology of $E:$ 
$$ E^{i,j}_2  = \wedge^i \bar H^* \otimes \wedge^j \bar N^* = H^i(H,H^j(N, \F_p)) \Rightarrow H^{i+j}(E, \F_p).$$
By Lemma \ref{lem:degenpadic} and our assumptions on $p,$ the spectral sequence $E_r$ 
degenerates on the $E_3$ page.
There is also the analogous spectral sequence $\bar E_r$ converging to  $ H^{i+j}(\bar E, \F_p)$: 
$$ \bar E^{i,j}_2  = H^i(\bar H, \F_p)\otimes_{\F_p} H^j(\bar N, \F_p) = H^i(\bar H,H^j(\bar N, \F_p)) \Rightarrow H^{i+j}(\bar E, \F_p).$$

 As observed above (by \cite[\S 10]{BensonCarlson} or \cite[Ch.~II, Cor.~4.3]{AdemMilgram}), the cohomology ring $H^\bullet(\bar H, \F_p)$ is a tensor product of an exterior and a symmetric algebra: 
\[H^\bullet(\bar H, \F_p) = \wedge^\bullet \bar H^*(1) \otimes \Sym^\bullet \bar H^*(2),\] where 
$\bar H^*(1), \bar H^*(2),$ denote the $\F_p$-dual of $\bar H$ considered in degree $1$ and $2$ respectively. 
One also has 
the analogous description of $H^\bullet(\bar N, \F_p).$
Let $\tilde E^{i,j}_2 \subset \bar E^{i,j}_2$ be the subgroup generated by the image of 
$$ \wedge^i \bar E_2^{1,0} \otimes \wedge^j \bar E_2^{0,1} \simeq \wedge^i H^1(\bar H,\F_p)\otimes \wedge^j H^1(\bar N, \F_p) \simeq \wedge^i H^1(H,\F_p)\otimes \wedge^j H^1(N, \F_p) $$
under the cup product. The inclusion $\tilde E^{i,j}_2 \subset \bar E^{i,j}_2$ induces a splitting of the projection 
$\bar E^{\bullet,\bullet}_2 \rightarrow  E^{\bullet,\bullet}_2$ as graded groups. 

We now make use of the $\F_p^\times$-action on $\bar E$ given by Lemma \ref{lem:Heisenbergcondns}. 
This induces an $\F_p^\times$-action on the spectral sequence $\bar E_r.$ 
In particular, the differential 
$$ d_2: \bar N^* \simeq \bar E^{0,1}_2 \rightarrow \bar E^{2,0}_2 \simeq \wedge^2 \bar H^* \oplus \bar H^*$$
respects weights, and so satisfies $d_2(\bar E^{0,1}) \subset \tilde E^{2,0}_2.$
Since $\bar{E}^{\bullet,\bullet}_2$ is a differential graded algebra, 
this implies that $\tilde E^{\bullet,\bullet}_2$ is stable under $d_2.$  Denote the cohomology 
of $(\tilde E^{\bullet,\bullet}_2, d_2)$ by  $\tilde E^{\bullet,\bullet}_3.$ Then 
$\tilde E^{\bullet,\bullet}_3 \subset \bar E^{\bullet,\bullet}_3$ induces a splitting of the projection 
$\bar E^{\bullet,\bullet}_3 \rightarrow E^{\bullet,\bullet}_3,$ as graded groups, so again the latter map is surjective.

To prove the proposition it suffices to show that for $r \geq 2,$ and $0 \leq i \leq \rk_{\Z_p} H,$ $0 \leq j \leq \rk_{\Z_p} N,$ 
the map $\bar E^{i,j}_r \rightarrow E^{i,j}_r$ is surjective, as the target of this map is trivial for $i,j$ outside this range. 
Thus we assume from now on that $i,j$ satisfy these inequalities. 

The above description of the cohomology rings 
$H^\bullet(\bar H, \F_p),$ $H^\bullet(\bar N, \F_p)$ shows that the weights of $\bar E^{i,j}_2$ are represented by integers in the 
interval $[\frac{1}{2}(i+2j), i+2j].$ Our assumptions on $i,j$ and $p$ imply that this interval has length less than $p-1$, 
i.e.~ the set of integers in it maps injectively to $\Z/(p-1)\Z.$ 
In particular, the weight $i+2j$ piece $(\bar E^{i,j}_2)_{i+2j} \subset \bar E^{i,j}_2$ is precisely  $\tilde E^{i,j}_2,$
and so  $\tilde E^{i,j}_3 = (E^{i,j}_3)_{i+2j}.$ 

Now for $r \geq 3,$ consider the differential $d_r: \bar E^{i,j}_r \rightarrow \bar E^{i+r,j-r+1}_r.$ 
The weights appearing in $\bar E^{i+r,j-r+1}_r$ are represented by integers in 
$[\frac{1}{2}(i+2j-r+2), i+2j-r+2].$ Our assumptions on $i$, $j$, and $p$ imply that the interval $[\frac{1}{2}(i+2j-r+2),i+2j]$ has length less than $p-1$ for $r = 3, \dots, j+1.$  This implies that $(\bar E^{i+r,j-r+1}_r)_{i+2j} = 0,$ 
and so $d_r|_{(\bar E^{i,j}_r)_{i+2j} }= 0.$ On the other hand for $r > j+1$ $d_r|_{\bar E^{i,j}_r} = 0$ as the target of this map is trivial.

We now show by induction on $r$ that $(\bar E^{i,j}_r)_{i+2j}$ maps isomorphically to $E^{i,j}_r,$ which in particular implies that 
$\bar E^{i,j}_r \rightarrow E^{i,j}_r$ is surjective, finishing the proof of the proposition.
We have already seen this for $r=3.$ Assume the result for some $r \geq 3.$ As $d_r|_{(\bar E^{i,j}_r)_{i+2j} }= 0$ it follows that 
$$ (\bar E^{i,j}_r)_{i+2j}  \twoheadrightarrow H(\bar E_r)^{i,j}_{i+2j} \simeq (\bar E^{i,j}_{r+1})_{i+2j} \rightarrow  E^{i,j}_{r+1} \simeq E^{i,j}_r, $$
 where the final isomorphism follows from Lemma \ref{lem:degenpadic}. 
 The composite is the natural map, which we are assuming is an isomorphism. Hence all the maps above are isomorphisms, which 
 completes the induction. 
\end{proof}

\subsection{Toric varieties}\label{subsec:toric}
\begin{para} Throughout this section we work over an algebraically closed field $C$ of characteristic 0.  Let $T$ be a torus over $C,$ $S$ an abelian variety over $C,$ and $U/S$ a $T$-torsor. 
Fix  a geometric point $\bar x$ of $U,$ and let $\bar s$ be its image in $S.$   
We will apply the results of the previous section to $U.$  For a profinite group $G,$ denote by $G^p$ its maximal pro-$p$ quotient. 
\end{para}

\begin{lem}\label{lem:torusembedding} The $C$-scheme $U$ is an \'etale $K(\pi,1),$ and $\pi_1(U,\bar x)$ is a central extension of free abelian profinite groups
\begin{equation}\label{eqn:extn} 
0 \rightarrow \pi_{1,\et}(U_{\bar s}, \bar x) \rightarrow \pi_{1,\et}(U, \bar x) \rightarrow \pi_{1,\et}(S, \bar s) \rightarrow 0, 
\end{equation}
whose maximal pro-$p$ quotient is an extension 
\begin{equation}\label{eqn:propextn} 
0 \rightarrow \pi_{1,\et}(U_{\bar s}, \bar x)^p \rightarrow \pi_{1,\et}(U, \bar x)^p \rightarrow \pi_{1,\et}(S, \bar s)^p \rightarrow 0. 
\end{equation}
The natural map $H^\bullet(\pi_{1,\et}(U, \bar x)^p, \F_p) \rightarrow H^\bullet(\pi_{1,\et}(U, \bar x), \F_p)$ is an isomorphism. 
\end{lem}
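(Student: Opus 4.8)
The plan is to reduce everything to the complex‑analytic picture, where the sequence \eqref{eqn:extn} becomes the homotopy long exact sequence of a fibre bundle, and then transfer the statements back. First I would dispose of the identifications of the outer groups. The fibre $U_{\bar s}$ is a $T$‑torsor over $\Spec C$, hence (as $C$ is algebraically closed) isomorphic to $T\cong\GG_{m,C}^{d}$ with $d=\dim T$, so $\pi_{1,\et}(U_{\bar s},\bar x)\cong\pi_{1,\et}(\GG_m)^{d}\cong\widehat\Z(1)^{d}$ is free abelian profinite of rank $d$, while $\pi_{1,\et}(S,\bar s)\cong\prod_{\ell}T_\ell S\cong\widehat\Z^{2g}$, $g=\dim S$, is free abelian profinite. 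In the applications $C$ has characteristic $0$, which I will assume; then, $T$, $S$, $U$ and the torsor structure being of finite presentation, they spread out over a finitely generated $\Z$‑subalgebra of $C$, and invariance of the étale fundamental group and of étale cohomology of finite type schemes under extension of an algebraically closed ground field reduces all assertions to $C=\CC$. (One could instead argue algebraically throughout via the étale homotopy exact sequence for Zariski‑locally trivial fibre bundles, which also covers positive characteristic; the analytic route is shorter.)

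Over $\CC$ the morphism $U(\CC)\to S(\CC)$ is a topologically locally trivial principal $T(\CC)$‑bundle, and $T(\CC)\simeq(S^{1})^{d}$ and $S(\CC)\simeq(S^{1})^{2g}$ are aspherical. The long exact homotopy sequence gives $\pi_k(U(\CC))=0$ for $k\geq 2$ (squeezed between zeros), so $U(\CC)$ is a $K(\pi,1)$ with $\pi:=\pi_1(U(\CC))$, and in low degree it yields a short exact sequence $1\to\pi_1(T(\CC))\to\pi\to\pi_1(S(\CC))\to 1$ which is central, since the bundle is principal under the connected group $T(\CC)$, so $\pi_1$ of the base acts trivially on $\pi_1$ of the fibre. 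Thus $\pi$ is finitely generated and $2$‑step nilpotent, hence ``good'' in the sense of Serre, so $H^{*}(\widehat\pi,M)\iso H^{*}(\pi,M)$ for finite $M$ and the étale homotopy type of $U$ (the profinite completion of $U(\CC)$) is a $K(\widehat\pi,1)$; together with the comparison $H^{*}_{\et}(U,\F_p)\cong H^{*}(U(\CC),\F_p)$ this shows $U$ is an étale $K(\pi,1)$ in the sense used here, and taking profinite completions of the displayed central extension — which stays exact, being a central extension of finitely generated free abelian groups — gives \eqref{eqn:extn} with the groups identified as above. For \eqref{eqn:propextn}, pro‑$p$ completion of such a central extension is again exact (standard for finitely generated torsion‑free nilpotent groups via Mal'cev theory), with $\pi_{1,\et}(U_{\bar s},\bar x)^{p}\cong\Z_p^{d}$ and $\pi_{1,\et}(S,\bar s)^{p}\cong\Z_p^{2g}$.

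For the final isomorphism I would compare Hochschild–Serre spectral sequences. The map of central extensions from \eqref{eqn:extn} to \eqref{eqn:propextn} (induced by $\pi_{1,\et}(U,\bar x)\twoheadrightarrow\pi_{1,\et}(U,\bar x)^{p}$) gives a map of $\F_p$‑cohomology Hochschild–Serre spectral sequences
\[
E_2^{i,j}=H^{i}\!\big(\pi_{1,\et}(S,\bar s),\,H^{j}(\pi_{1,\et}(U_{\bar s},\bar x),\F_p)\big)\Rightarrow H^{i+j}(\pi_{1,\et}(U,\bar x),\F_p)
\]
and its pro‑$p$ analogue, the coefficient systems being trivial by centrality. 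On $E_2$‑terms the vertical maps are of the shape $H^{i}(\widehat\Z^{2g},\F_p)\otimes H^{j}(\widehat\Z^{d},\F_p)\to H^{i}(\Z_p^{2g},\F_p)\otimes H^{j}(\Z_p^{d},\F_p)$, which are isomorphisms since for every $m$ one has $H^{*}(\widehat\Z^{m},\F_p)=\wedge^{*}\F_p^{m}=H^{*}(\Z_p^{m},\F_p)$ (reduce to $m=1$, where both sides are $\F_p$ in degrees $0,1$, and apply Künneth). An isomorphism on $E_2$‑pages forces an isomorphism on the abutments, which is exactly the claim. (Equivalently: $\widehat\pi=\prod_{\ell}\pi^{(\ell)}$ for the finitely generated nilpotent group $\pi$, and for $\F_p$‑coefficients Künneth kills every factor with $\ell\neq p$, identifying $H^{*}(\widehat\pi,\F_p)$ with $H^{*}(\pi^{(p)},\F_p)=H^{*}(\pi_{1,\et}(U,\bar x)^{p},\F_p)$.)

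The only genuinely substantial point is the exactness of \eqref{eqn:extn} itself — that the kernel of $\pi_{1,\et}(U,\bar x)\to\pi_{1,\et}(S,\bar s)$ is precisely the image of $\pi_{1,\et}$ of the fibre, with no covers gained or lost. Over $\CC$ this is the homotopy exact sequence of a fibre bundle and is immediate; purely algebraically it requires either the étale homotopy exact sequence for locally trivial fibrations or a direct analysis of the finite étale covers of $U\times_S\widetilde S$, where $\widetilde S$ is the universal pro‑finite‑étale cover of $S$. This is why reducing to the analytic situation (or citing that exact sequence) is the efficient route; the remaining ingredients — goodness of finitely generated nilpotent groups, the behaviour of pro‑$p$ completion on central extensions of $\Z^{n}$'s, and the spectral‑sequence comparison — are routine.
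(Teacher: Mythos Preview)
Your argument is correct, but the route differs from the paper's in two places.  For the $K(\pi,1)$ statement the paper does not reduce to $\CC$ and invoke asphericity plus Serre--goodness; instead it argues algebraically by comparing the Leray spectral sequence of $U\to S$ with the Hochschild--Serre spectral sequence for $\pi_{1,\et}(U_{\bar s})\subset\pi_{1,\et}(U)$, using that $S$ and $U_{\bar s}$ are already known to be \'etale $K(\pi,1)$'s (both cohomology rings are exterior algebras on $H^1$).  For centrality the paper cites Zariski local triviality of the $T$-torsor (Hilbert~90) rather than connectedness of $T(\CC)$.  For the final cohomology isomorphism the paper observes in one line that the kernel of $\pi_{1,\et}(U)\to\pi_{1,\et}(U)^p$ is a prime-to-$p$ profinite group --- which is exactly your parenthetical remark that $\widehat\pi=\prod_\ell\pi^{(\ell)}$ --- so your full $E_2$-comparison is unnecessary, though not wrong.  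The paper's approach is shorter and avoids the spreading-out/Lefschetz step and the appeal to goodness; your approach has the virtue of making the underlying topology explicit and would be more transparent to a reader less comfortable with the \'etale formalism.
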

\begin{proof} Note that $S$ and $U_{\bar s}$ are \'etale $K(\pi,1)$'s. Indeed, for each of these varieties, both their cohomology and the cohomology of the respective \'etale fundamental group are exterior algebras on their cohomology in degree $1.$ Now the fact that $U$ is a $K(\pi,1)$ follows by 
comparing the Hochschild-Serre spectral sequence for $\pi_{1,\et}(U_{\bar s}, \bar x) \subset \pi_{1,\et}(U, \bar x)$ and the Leray spectral sequence for 
the map $U \rightarrow S.$

It is well known that $\pi_{1,\et}(U,\bar x)$ is an extension as in \ref{eqn:extn}. That this is a central extension follows from the fact that the $T$-torsor $U$ is Zariski locally trivial (Hilbert's Theorem 90) \cite[IX, Thm.~3.3]{SGA4}. 

That the maximal pro-$p$ quotient of (\ref{eqn:extn}) is an extension as in (\ref{eqn:propextn}) follows easily from the fact that any central extension of two finite abelian groups of coprime order is trivial. It follows that the kernel 
of $\pi_{1,\et}(U, \bar x) \rightarrow \pi_{1,\et}(U, \bar x)^p$ is a prime to $p$ profinite group, which implies the final claim.
\end{proof}

\begin{para}\label{para:Heisenbergquot} 
Keep the assumptions above, and consider the extension (\ref{eqn:propextn}). 
We have the notion of a mod $p$ reduction of such an extension given in \ref{para:modp}. 
In particular, such a mod $p$ reduction gives rise to a 
surjection of $\pi_{1,\et}(U,\bar x) \twoheadrightarrow \bar E$ onto a finite Heisenberg group $\bar E.$ 
We write $U(\bar E) \rightarrow U$ for the finite cover corresponding to $\bar E.$ 
\end{para}

\begin{para}
We again denote by $T$ the split torus over $\mathbb Z$ with character group $X^*(T).$ 
We say $U \rightarrow S$ has {\em good reduction at $p$} if there exists a $p$-adic valuation on $C,$ with ring of integers $\O_{C},$ such that  $U \rightarrow S$ extends to a $T$-torsor over an abelian scheme over $\O_{C^\circ},$ $U^\circ \rightarrow S^\circ.$ 
 We say $U \rightarrow S$ has {\em unramified good reduction} if $U^\circ \rightarrow S^\circ$ can be chosen so that it descends to a $T$-torsor 
 over an absolutely unramified discrete valuation ring $\O \subset \O_{C}.$
\end{para}

\begin{prop}\label{prop:edTtorsor} Suppose that $p > \dim U + \frac{1}{2}(\dim U_{\bar s} + 1),$ and that $U\rightarrow S$ has unramified good reduction at $p.$ 
Then $ \ed(U(\bar E)/U;p) = \dim U.$
\end{prop}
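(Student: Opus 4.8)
The strategy is to reduce to Proposition \ref{prop:essdimlowerbounds} applied to the cover $U(\bar E) \to U$ with group the finite Heisenberg group $\bar E$. By Lemma \ref{lem:torusembedding}, $U$ is an \'etale $K(\pi,1)$, so $H^*_{\et}(U,\F_p) \cong H^*(\pi_{1,\et}(U,\bar x),\F_p) \cong H^*(\pi_{1,\et}(U,\bar x)^p,\F_p)$. The covering $U(\bar E)\to U$ corresponds to the surjection $\pi_{1,\et}(U,\bar x)^p \twoheadrightarrow \bar E$ of \ref{para:Heisenbergquot}; factoring this surjection through the extension \eqref{eqn:propextn} mapping to its mod $p$ reduction, and invoking Proposition \ref{cohredn}, I get that the inflation map $H^*(\bar E,\F_p) \to H^*(\pi_{1,\et}(U,\bar x)^p,\F_p) = H^*_{\et}(U,\F_p)$ is surjective, provided $p > \tfrac12(\rk_\Z H + 3\,\rk_\Z N + 1)$; here $N = \pi_{1,\et}(U_{\bar s},\bar x)^p$ has $\Z$-rank $\dim U_{\bar s}$ and $H = \pi_{1,\et}(S,\bar s)^p$ has $\Z$-rank $2\dim S = \dim U - \dim U_{\bar s}$, so the bound reads $p > \tfrac12(\dim U + \dim U_{\bar s} + 1)$, which is implied by our hypothesis $p > \dim U + \tfrac12(\dim U_{\bar s}+1)$. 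So \eqref{eqn:charclassmap} is surjective in every degree, in particular in degree $i = \dim U$.

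Next I need to feed this into Proposition \ref{prop:essdimlowerbounds} with $i = \dim U$ and $(X,D)$ a suitable proper model with boundary. Since $U \to S$ has unramified good reduction at $p$, choose the extension $U^\circ \to S^\circ$ over an absolutely unramified DVR $\O \subset \O_C$; then I need a smooth proper $\O$-scheme $X$ with relative normal crossings divisor $D$ such that $X \setminus D \cong U^\circ$. This is exactly a toroidal (smooth projective) compactification of the torus torsor $U^\circ$ over the abelian scheme $S^\circ$, which exists (one can take a torus embedding $T \hookrightarrow \overline T$ associated to a smooth projective fan in $X_*(T)\otimes\RR$ and form the contracted product $U^\circ \times^T \overline T$). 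I then need the Hodge number bound $h^{0,i}_{(X_C,D_C)} \neq 0$ for $i = \dim U = \dim X$: this follows because $H^0$ of the top logarithmic differentials $\Omega^{\dim X}_{X_C}(\log D_C) = \omega_{X_C}(D_C)$ is nonzero — indeed $\Omega^{\dim U}_{U_C}$ is trivial ($U_C$ being a torus torsor over an abelian variety, hence parallelizable), so a nowhere-vanishing top form on $U_C$ extends to a section of $\omega_{X_C}(D_C)$ (this is precisely why one works with log differentials). With $h^{0,\dim U}_{(X,D)} \geq 1$ and \eqref{eqn:charclassmap} surjective and $\dim U < p-2$ (guaranteed by the hypothesis on $p$), Proposition \ref{prop:essdimlowerbounds} gives $\ed(U(\bar E)/U;p) \geq \dim U$. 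The reverse inequality $\ed(U(\bar E)/U;p) \leq \dim U$ is automatic since $U$ itself has dimension $\dim U$.

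The main obstacle is the geometric input on the compactification: one must know that $U^\circ$ admits a smooth projective toroidal compactification $(X,D)$ over $\O$ with $D$ a \emph{relative} normal crossings divisor (so that the prismatic/de Rham machinery of \S2.1 applies), and one must correctly identify the relevant Hodge number as $h^{0,\dim U}_{(X,D)} = \dim H^0(X_C, \omega_{X_C}(\log D_C)) \geq 1$ via the triviality of the canonical bundle of the open part. Everything else is bookkeeping: matching the numerical bound in Proposition \ref{cohredn} with the hypothesis on $p$, and verifying that the surjectivity of inflation passes through the pro-$p$ completion using Lemma \ref{lem:torusembedding} and Lemma \ref{lem:existredn}.
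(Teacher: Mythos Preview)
Your proposal is correct and follows essentially the same route as the paper: verify surjectivity of \eqref{eqn:charclassmap} in degree $d=\dim U$ via Proposition~\ref{cohredn} and Lemma~\ref{lem:torusembedding}, build a smooth proper model $(X,D)$ of $U$ over the unramified DVR, check $h^{0,d}_{(X,D)}\neq 0$, and invoke Proposition~\ref{prop:essdimlowerbounds}. Two minor remarks. First, your arithmetic slips: $\rk_{\Z}H=2\dim S=2(\dim U-\dim U_{\bar s})$, not $\dim U-\dim U_{\bar s}$; redoing the computation gives the bound from Proposition~\ref{cohredn} as exactly $p>\dim U+\tfrac12(\dim U_{\bar s}+1)$, i.e.\ the hypothesis on the nose rather than a weaker consequence of it. Second, where you appeal to an abstract smooth projective toric fan and the contracted product, the paper writes down the concrete $(\Pb^1)^t$-bundle $P^\circ=\prod_{S^\circ}\Proj(\O\oplus\L_i^\vee)$ and the explicit top log form $\omega_r\wedge\omega_T$; these are the same construction specialized to the standard fan, and your triviality-of-$\Omega^d_U$ argument is just the coordinate-free restatement of that explicit form.
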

\begin{proof} Since $U$ has unramified good reduction, there is a $p$-adic valuation on $C,$ and an 
absolutely unramified discrete valuation ring $\O \subset \O_{C},$ such that $U$ descends to a $T$-torsor over an abelian scheme 
$U^\circ \rightarrow S^\circ$ over $\O.$ Fix a basis for $X^*(T).$ Then $U^\circ$ corresponds to a collection of line bundles $\L_1, \dots, \L_t$ over $S^{\circ},$ where $t = \dim U_{\bar s}.$ Let $P^\circ_i = \Proj_{\O_{S^{\circ}}}(\O_{S^{\circ}}\oplus \L_i^\vee),$ and set 
$P^\circ = P^\circ_1\times_{S^{\circ}} P^\circ_2 \times_{S^\circ} \dots \times_{S^{\circ}} P^\circ_t $.   
Then $P^\circ$ is a smooth projective scheme $S^\circ$-scheme, and $D^\circ = P^\circ - U^\circ$ is a normal crossings divisor. 
We set $P = P^\circ_{C}$ and $D = D^\circ_{C}.$

Let $d = \dim U.$ Fix an identification $T \simeq  \GG^t_m,$ and let  $z_1, \dots z_t$ be the 
standard co-ordinates on $\GG^t_m.$ 
By Hilbert's Theorem 90, Zariski locally on $S,$ we can identify $U$ 
with $S\times T \simeq S \times \GG^t_m.$ The differential 
$\omega_T = \frac{dz_1}{z_1} \wedge \dots \wedge\frac{dz_t}{z_t}$ is $T$-invariant, and hence does not depend on the identification 
$S\times T \simeq U.$ (It  is independent of our fixed isomorphism $T \simeq  \GG^t_m$ up to a sign). It follows that $\omega_T$ 
gives rise to a global section of $\Gamma(P,\Omega^t_P(\log D)).$ Let $r = \dim S,$ and $\omega_r \in \Gamma(S, \Omega^r_S)$ a nonzero $r$-form. 
Then $\omega_r\otimes \omega_T \in \Gamma(P,\Omega^d_P(\log D))$ is nonzero, so $h^{0,d}_{(P,D)} \neq 0.$

Now consider the maps 
$$ H^d(\bar E, \F_p) \rightarrow H^d(\pi_1(U, \bar x)^p, \F_p) \rightarrow H^d(\pi_1(U, \bar x), \F_p) \rightarrow H^d(U, \F_p).$$ 
The first map is surjective by Proposition \ref{cohredn}, and the second and third maps are isomorphisms by Lemma \ref{lem:torusembedding}. 
As $h^{0,d}_{(P,D)} \neq 0,$ the Proposition follows from Proposition \ref{prop:essdimlowerbounds}.
\end{proof}

\begin{para}\label{para:toric} We want to prove a variant of Proposition \ref{prop:edTtorsor} which will be used in the next subsection to show $p$-incompressibility of certain coverings of  Shimura varieties. 
To explain it, we recall some facts about families of toric varieties \cite{Fultontoric}, \cite{Kempfetal}, 
\cite[\S 5]{Pinkthesis}. Note that the family $P \rightarrow S,$ which appeared in the proof of Proposition \ref{prop:edTtorsor} is an example of such a family.

Let $k$ be a field. A toric variety, or torus embedding $X$ over $k$ is a normal $k$-scheme $X,$ equipped with an action of a split torus $T,$ such that 
$X$ admits a dense open $T$-orbit $U,$ which is a $T$-torsor. These can be described in terms of 
{\em fans}, which are certain collections $\Sigma$ of {\em convex polyhedral cones} in $X_*(T)_{\mathbb \Q}$.
In particular, proper toric varieties correspond to {\em complete} fans, which are certain decompositions of 
$X_*(T)_{\mathbb \Q}$ into convex polyhedral cones. 

Since we are assuming that $T$ is a split torus, it extends canonically to a torus over $\mathbb Z$, which we again denote by $T.$  
Thus, a fan $\Sigma$ actually defines a torus embedding $X(\Sigma)$ over $\mathbb Z.$ That is, $X(\Sigma)$ is a normal scheme, 
equipped with an action of $T,$ containing a dense $T$-torsor. The fiber of $X(\Sigma)$ over any point $ \Spec k \to\Spec \mathbb Z$ is the toric variety over $k$ corresponding to $\Sigma.$

We will need the relative version of this notion. Let $S$ be a scheme and $X \rightarrow S$ a map of schemes, 
equipped with an action of $T.$ Then $X/S$ is called a torus embedding over $S$ if, Zariski locally on $S,$  $X \rightarrow S$ can 
be $T$-equivariantly identified with $X(\Sigma)\times_{\mathbb Z} S$ for a fan $\Sigma.$ 
In this case there is an open subset $U \subset X,$ which is a $T$-torsor, and which is dense in the fiber over every point of $S.$ 
The complement $D = X-U,$ is called the {\em boundary} of $X.$ 

We remark that it may appear more natural to make this definition with the condition on $X\rightarrow S$ imposed only \'etale locally on $S.$ However these two notions are the same: As $X(\Sigma)$ contains an open dense $T$-torsor $U(\Sigma),$ 
 $\Aut_T (X(\Sigma)\times_{\mathbb Z} S) = \Aut_T (U(\Sigma)\times S) = T(S).$ Hence any $X\rightarrow S$ which is \'etale locally isomorphic to 
 $X(\Sigma)\times S$ gives rise to an \'etale $T$-torsor. As in the proof of Proposition~\ref{prop:edTtorsor}, such a torsor is Zariski locally trivial.  
\end{para}

\begin{lem}\label{classificationtoric} Suppose that $S$ is irreducible, and fix $T$ as above. Then there is an equivalence of 
categories between torus embeddings $X \rightarrow S$ containing a dense open $T$-torsor, and pairs $(U,\Sigma),$ where $U$ is a $T$-torsor over $S,$ and $\Sigma$ is a fan in $X_*(T)_{\Q}.$ 
\end{lem}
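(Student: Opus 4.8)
The plan is to construct functors in both directions and check they are mutually inverse. In one direction, given a torus embedding $X \rightarrow S$ with dense open $T$-torsor $U$, one takes the pair $(U, \Sigma)$, where $\Sigma$ is the fan recording the combinatorial data of the embedding; the point is that $\Sigma$ is well-defined independent of the Zariski-local trivializations used to define $X \rightarrow S$, which follows because, as noted in \ref{para:toric}, $\Aut_T(X(\Sigma) \times_{\Z} S) = T(S)$ acts transitively on the trivializations but fixes $\Sigma$ (the $T$-action on an affine chart determines the cone). Here one uses that $S$ is connected so that a single fan works globally rather than a locally constant family of fans. In the other direction, given $(U, \Sigma)$, one glues: choose a Zariski open cover $\{S_i\}$ of $S$ trivializing $U$, so $U|_{S_i} \simeq T \times S_i$, form $X(\Sigma) \times_{\Z} S_i$, and glue these over the overlaps $S_{ij}$ using the transition functions of $U$, which are sections of $T(S_{ij})$ acting on $X(\Sigma)$ through its $T$-action. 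This produces a torus embedding $X \rightarrow S$ with dense open $T$-torsor canonically identified with $U$.

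First I would verify that the two constructions are well-posed: for the gluing construction, one checks the cocycle condition for the transition functions is inherited from that of $U$, using that $T$ acts on $X(\Sigma)$ functorially over $\Z$; for the fan-extraction construction, one checks that two Zariski-local trivializations of $X \rightarrow S$ over a connected base differ by an element of $T(S)$ (again via the identity $\Aut_T = T(S)$ from \ref{para:toric}) and hence yield the same fan. Next I would check functoriality: a $T$-equivariant morphism $X \rightarrow X'$ over $S$ restricts to a morphism of the dense $T$-torsors $U \rightarrow U'$, and locally corresponds to a morphism of toric varieties $X(\Sigma) \rightarrow X(\Sigma')$, hence to a map of fans $\Sigma \rightarrow \Sigma'$ (a morphism of toric varieties over a field, equivariant for the torus, is determined by and determines such a combinatorial map, and this is compatible with base change from $\Z$). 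Then I would check the two functors are inverse to each other: starting from $(U,\Sigma)$, gluing and then extracting recovers $(U,\Sigma)$ on the nose by construction; starting from $X \rightarrow S$, extracting $(U, \Sigma)$ and re-gluing gives a scheme canonically $T$-isomorphic to $X$ over $S$ because both are obtained by gluing the same local pieces $X(\Sigma) \times_{\Z} S_i$ along the same transition data.

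The main obstacle is the descent/gluing step in the reverse direction, specifically checking that the local models $X(\Sigma) \times_{\Z} S_i$ glue to a scheme (not merely an algebraic space), and that the resulting object does not depend on the chosen trivializing cover. This is where one really uses that a $T$-torsor $U$ over $S$ is Zariski-locally trivial (Hilbert 90, invoked already in \ref{para:toric} and in the proof of Proposition \ref{prop:edTtorsor}), so the transition functions genuinely live in $T(S_{ij})$ and act on $X(\Sigma)$ by automorphisms; effectivity of the gluing is then standard since we are gluing schemes along open subschemes with a cocycle. The independence of the cover is handled by passing to a common refinement and using the uniqueness-up-to-$T(S)$ of trivializations. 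Everything else — functoriality, compatibility with base change from $\Z$, and the two round-trip identities — is a routine unwinding of definitions once these points are in place.
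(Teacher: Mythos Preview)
Your proof is correct and the overall strategy (functors both ways, check they are quasi-inverse) matches the paper's, but the paper's execution is more concise on both sides. For the forward direction the paper simply takes $\Sigma$ to be the fan of the generic fibre $X_\eta$, bypassing your argument that the fan extracted from local trivializations is well-defined. For the reverse direction, rather than gluing local models $X(\Sigma)\times_{\Z} S_i$ along the transition functions of $U$, the paper forms the contracted product $X = (X(\Sigma)\times U)/T$ in one stroke; upon Zariski-local trivialization of $U$ this unwinds to exactly your gluing, but phrased globally it avoids choosing a cover, verifying a cocycle, or checking independence of choices. Your approach has the virtue of making the local structure explicit, which is what one reaches for when actually computing with these objects; the paper's phrasing buys brevity and makes the round-trip identities immediate.
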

\begin{proof} Given $X \rightarrow S,$ we can associate to it the open $T$-torsor $U \subset X,$ and the fan $\Sigma$ corresponding to the 
torus embedding $X_{\eta},$ where $\eta \in S$ is the generic point. Conversely, given $(U,\Sigma),$ $\Sigma$ defines a toric variety 
$X_0$ over $C,$ which is even defined over $\Z.$ We take $X = (X_0 \times U)/T.$ One checks easily that these two constructions 
are quasi-inverses.
\end{proof}

\begin{para} Now suppose that $S$ is a $C$-scheme, and let $X \rightarrow S,$ $U$ and $D$ be as above. 
Then $D$ has a stratification $D_0 \subset D_1 \subset D_2 \dots,$ which may be described as follows: 
Zariski locally, $X = X_0 \times S$ as torus embeddings, where $X_0$ is a torus embedding over $C.$ We set $D_j \subset D$ to be the product of $S$ and the closure of the $j$-dimensional $T$-orbits in $X_0.$ Then $D_0$ is \'etale over $S,$ and, if $X$ is proper, it is finite \'etale.

Let $X^1 \rightarrow X$ be the blow up of $D_0$ on $X.$ Since $D_0$ is fixed by $T,$ $X^1$ is again a torus embedding over $S,$ 
and we denote its boundary by $D^1.$  If $X$ is smooth over $S,$ with $D$ a relative normal crossings divisor, then the same is true for $X^1$ 
and $D^1.$ We can continue this construction to obtain a sequence of blow ups $X \leftarrow X^1 \leftarrow X^2 \dots,$ with boundary 
$D^i \subset X^i.$ Finally, we denote by $\widehat X^i$ the completion of $X^i$ along $D^i$, and by $\widehat X$ the completion of $X$ along $D$. 

The following lemma implies that if $(X,D)$ has unramified good reduction at $p,$ then so does $(X^1, D^1):$ 
\end{para}

\begin{lem}\label{toricgoodredn} Suppose that $X/S$ is smooth, and that $D$ is a normal crossings divisor. 
If the $T$-torsor $U$ has unramified good reduction at $p$, then $(X,D)$ has unramified good reduction at $p.$
\end{lem}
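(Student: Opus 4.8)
The plan is to transport the given integral model of the $T$-torsor $U$ across the equivalence of Lemma~\ref{classificationtoric} to produce an integral model of the whole pair $(X,D)$. By the definition of unramified good reduction of a $T$-torsor (given just before Proposition~\ref{prop:edTtorsor}), there is a $p$-adic valuation on $C$, an absolutely unramified discrete valuation ring $\O$ inside its ring of integers $\O_C$, an abelian scheme $S^\circ$ over $\O$, and a $T$-torsor $U^\circ \to S^\circ$ whose base change to $C$ recovers $U \to S$. (The conclusion is only meaningful when $X$ is proper over $C$, as in \ref{para:goodredn}; we assume this, which in our applications amounts to $S$ being an abelian variety and the fan below being complete.)

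First I would extract the combinatorial data. By Lemma~\ref{classificationtoric} applied over the connected base $S$, the torus embedding $X \to S$ corresponds to a pair $(U, \Sigma)$ with $\Sigma$ a fan in $X_*(T)_\Q$; concretely $X \cong (X(\Sigma)_C \times_C U)/T$, where $X(\Sigma)$ is the torus embedding over $\Z$ attached to $\Sigma$ (using that $T$, being split, extends canonically over $\Z$), and $D$ is the image of $D(\Sigma)_C \times_C U$. I would then set $X^\circ := (X(\Sigma)_\O \times_\O U^\circ)/T$ over $S^\circ$, with boundary $D^\circ := X^\circ - U^\circ$; the $T$-action is free since $T$ acts freely on $U^\circ$, so this quotient is well behaved, and as its formation commutes with base change, the base change of $(X^\circ, D^\circ)$ to $C$ is $(X,D)$. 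By construction $(X^\circ, D^\circ)$ is already defined over $\O$, so it remains to check that $X^\circ \to \Spec\O$ is smooth and proper and that $D^\circ$ is a relative normal crossings divisor.

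The point is that all three of these are properties of the fan $\Sigma$ alone, hence pass from $(X,D)/S$ to the model over $\O$. Smoothness of a torus embedding over any base is equivalent to every cone of $\Sigma$ being spanned by part of a $\Z$-basis of $X_*(T)$ (see \cite{Fultontoric}, \cite{Kempfetal}); since $X \to S$ is smooth, $\Sigma$ has this property, so $X(\Sigma)_\O \to \Spec\O$ is smooth, hence so is $X^\circ \to S^\circ$ (\'etale-locally on $S^\circ$ it is $X(\Sigma)_\O \times S^\circ$), and hence $X^\circ \to \Spec\O$ is smooth because $S^\circ$ is an abelian scheme over $\O$. The same condition makes $D(\Sigma)$ fiberwise a union of coordinate hyperplanes, so $D(\Sigma)_\O$, and therefore $D^\circ$, is a relative normal crossings divisor over $\O$ (and over $S^\circ$), just as $D$ is over $S$. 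Finally, $X \to S$ proper forces $\Sigma$ complete, so $X(\Sigma)_\O \to \Spec\O$ is proper, and then $X^\circ \to \Spec\O$ is proper since $S^\circ \to \Spec\O$ is. Thus $(X^\circ, D^\circ)$ is a smooth proper $\O$-model of $(X,D)$ with $D^\circ$ a relative normal crossings divisor, which is exactly the statement that $(X,D)$ has unramified good reduction at $p$.

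The only delicate point — the ``hard part,'' such as it is — is the bookkeeping around base change: that the construction $(U,\Sigma)\rightsquigarrow(X,D)$ of Lemma~\ref{classificationtoric} commutes with changing the base scheme (so that the model over $\O$ genuinely recovers $(X,D)$, not merely a twisted form), together with the standard but not wholly formal fact that smoothness, properness, and the normal-crossings property of $(X(\Sigma), D(\Sigma))$ over an arbitrary base are governed purely by $\Sigma$. Granting these, the rest is immediate from the definitions.
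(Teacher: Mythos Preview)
Your proof is correct and follows essentially the same route as the paper's: extract the fan $\Sigma$ from $X$ via Lemma~\ref{classificationtoric}, apply the equivalence to the integral $T$-torsor $(U^\circ,\Sigma)$ to obtain $X^\circ$ over $\O$, and then observe that smoothness and the normal crossings property are read off from $\Sigma$ (the paper cites \cite[II.2, Thm.~4*]{Kempfetal} for this last step). Your version is simply more explicit, spelling out the quotient $X^\circ=(X(\Sigma)_\O\times_\O U^\circ)/T$, the base-change compatibility, and the properness check, all of which the paper leaves implicit.
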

\begin{proof} Suppose $U/S$ descends to a $T$-torsor $U^\circ$ over a discrete valuation ring $\O \subset C.$ 
By Lemma \ref{classificationtoric}, as a torus embedding $X$ is determined by $(U,\Sigma)$ for some fan $\Sigma.$ 
Then $(U^\circ, \Sigma)$ determines a torus embedding $X^\circ$ over $\O.$ The property that $X$ is smooth and $D$ is a 
normal crossings divisor can be read off from the fan $\Sigma$ \cite[II.2,Thm.~4*]{Kempfetal}. 
It implies that $X^\circ$ is smooth over $\O,$ and that its boundary $D^\circ$ is a relative normal crossings divisor.
\end{proof} 

\begin{lem}\label{lem:blowups} Suppose that $S$ and $X/S$ are smooth, with $D \subset X$ a relative normal crossings divisor. 
Let $Z \subset \widehat X$ be a formal subscheme of codimension $1,$ and $Z^i \subset \widehat X^i$ its proper transform. 
Then there exists a dense open subset $W \subset S$ and an integer $i \geq 1$ such that $D_0^i|_W \nsubseteq Z^i.$
\end{lem}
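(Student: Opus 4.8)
The plan is to argue that after sufficiently many blow-ups the $0$-dimensional stratum $D_0^i$ moves ``transversally'' to the proper transform $Z^i$, so that it cannot be contained in it. First I would reduce to a local situation. Since the question is about the behaviour over a dense open $W \subset S$, I would shrink $S$ so that $X = X_0 \times S$ as torus embeddings, with $X_0$ a smooth toric variety over $C$ associated to a smooth fan $\Sigma$. The stratum $D_0 \subset X$ is then $V_0 \times S$, where $V_0$ is the finite set of torus-fixed points of $X_0$, one for each maximal cone $\sigma \in \Sigma$ of dimension $t = \dim T$; in local coordinates near such a fixed point, $X$ looks like $\Aff^t \times S$ with the fixed point at the origin of the $\Aff^t$ factor, and the boundary $D$ being the union of the coordinate hyperplanes $\{z_j = 0\}$.

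The key point is to understand the effect of the blow-up $X^1 \to X$ on a codimension-$1$ formal subscheme $Z$. Locally near a fixed point, blowing up $D_0$ is the blow-up of $\Aff^t \times S$ along the origin of $\Aff^t$, times $S$; the new stratum $D_0^1$ consists of the fixed points of the new toric variety $X^1$, which lie on the exceptional divisor. Iterating, $D_0^i$ is a finite \'etale cover of $S$ sitting inside an ``iterated exceptional locus'' over the original fixed-point locus. The main step is to show: \emph{if $Z^i \supseteq D_0^i$ for all $i$, then $Z$ must contain all of the fixed-point locus $D_0$ and, worse, the geometry forces a contradiction with $Z$ being of codimension exactly $1$.} Concretely, after one blow-up the proper transform $Z^1$ is cut out (near an exceptional point) by the strict transform of the defining equation of $Z$, and the condition that $Z^1$ contains the whole new fixed stratum means the multiplicity of that equation at the origin must be at least the order of the exceptional divisor along which we sit; repeating, the multiplicity of a local equation of $Z$ at the origin would have to be infinite, which is absurd since $Z$ is a (nonzero) Cartier divisor. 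Thus for some $i$ and some fixed point of the fiber, the corresponding component of $D_0^i|_W$ is not contained in $Z^i$; shrinking $S$ to the locus $W$ where this component is still \'etale and disjoint from $Z^i$ gives the claim. (Here I use that $D_0^i \to S$ is finite \'etale when $X^i$ is proper over $S$, as recorded in the paragraph preceding the lemma, and that $X^i, D^i$ remain smooth with $D^i$ a relative normal crossings divisor.)

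I would organize the write-up as follows. Step 1: reduce to $X = X_0 \times S$ over a dense open, and further to an affine chart $\Aff^t \times S$ around a single torus-fixed point. Step 2: describe explicitly, in these coordinates, the blow-up $X^{i} \to X^{i-1}$ of the $0$-stratum and track the location of the new $0$-stratum inside the exceptional divisor; this is a purely toric (combinatorial) computation which I would state but not belabor. Step 3: suppose for contradiction that $D_0^i|_W \subseteq Z^i$ for all $i$ and all dense open $W$, pick a generic point $s \in S$, work with the local ring of $\widehat X$ at a fixed point of the fiber over $s$, and let $f$ be a local equation of $Z$ there; show that the containment after $i$ blow-ups forces $\operatorname{mult}_{0}(f) \geq c_i$ with $c_i \to \infty$, contradicting $f \neq 0$. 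Step 4: conclude that for some $i \geq 1$ there is a point of $D_0^i$ over a generic point of $S$ not lying on $Z^i$, then spread this out to a dense open $W \subset S$ over which the corresponding connected component of $D_0^i$ is disjoint from $Z^i$, which in particular gives $D_0^i|_W \nsubseteq Z^i$.

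The main obstacle I anticipate is making the multiplicity-growth argument in Step 3 precise: one has to keep careful track of which component of the iterated exceptional divisor the relevant fixed point of $X^i$ lies on, and how the order of vanishing of the proper transform of $f$ along that component relates to $\operatorname{mult}_0(f)$ — the growth is governed by the combinatorics of successive star-subdivisions of the cone $\sigma$ at its barycenter, and one needs a clean lower bound (e.g.\ that after $i$ subdivisions every ray has been refined enough that the discrepancy, or equivalently the order of the pullback of the maximal ideal, is at least $i$). An alternative that may be cleaner is to avoid multiplicities entirely and argue directly on the level of fans: the fixed points of $X^i$ correspond to the maximal cones of the $i$-th iterated barycentric subdivision $\Sigma^{(i)}$ of $\Sigma$, these maximal cones become ``arbitrarily thin'' as $i \to \infty$, and a fixed codimension-$1$ subscheme $Z$ can meet only finitely much of this refinement in a way compatible with containing every new fixed point — so I would present Step 3 in whichever of these two forms turns out to require the least combinatorial bookkeeping, while flagging that this is the delicate point.
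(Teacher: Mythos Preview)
Your approach is essentially the paper's: reduce to the generic point of $S$, work in local coordinates $z_1,\dots,z_n$ at a torus-fixed point with $D=\{z_1\cdots z_n=0\}$, write a local equation $f=\sum_I a_I z^I$ for $Z$, and use a multiplicity argument to show that after finitely many blow-ups some fixed point escapes the proper transform.

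The ``main obstacle'' you flag dissolves once you phrase Step~3 as the paper does, namely as a \emph{direct induction} on $|J|=\min\{|I|:a_I\neq 0\}$ rather than a contradiction with growing multiplicity. Pick $J$ realizing this minimum; if $|J|=0$ the fixed point is already off $Z$. Otherwise choose a coordinate, say $z_1$, with $j_1>0$ and pass to the single chart of the blow-up with coordinates $z_1,u_2=z_2/z_1,\dots,u_n=z_n/z_1$; the proper transform is cut out by $z_1^{-|J|}f$, which visibly contains the monomial $a_J\,u_2^{j_2}\cdots u_n^{j_n}$ of total degree $|J|-j_1<|J|$. So the invariant drops strictly at the chosen new fixed point, and no global combinatorics of iterated star-subdivisions or discrepancy bounds is needed. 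Your contradiction framing and the fan-theoretic alternative would work, but they are exactly the bookkeeping you can avoid by tracking this one monomial.
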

\begin{proof} Let $\pi_i: \widehat X^i \rightarrow \widehat X.$ Recall that the proper transform $Z^i$ is defined to be the union of those 
components of $\pi_i^{-1}(Z)$ which are not contained in $\pi_i^{-1}(D_0).$ It suffices to show the lemma with $S$ replaced by one of its 
generic points, so we may assume that $S = \Spec \kappa$ is a field. 
By induction on the number of irreducible components of $Z,$ we may assume that $Z$ is an irreducible Cartier divisor in $X.$ 

Let $x \in D_0,$ and let $z_1, \dots z_n$ be a system of local co-ordinates at $x,$ such that the ideal of $D$ at $x$ is given by 
$z_1z_2\dots z_n.$ Then in a formal neighborhood of $x$, the ideal of $Z$ is generated by a nonzero power series $f = \sum_I a_I z^I,$ 
where $I = (i_1, \dots, i_n)$ runs over $n$-tuples of non-negative integers, and $a_I \in \kappa.$ Choose such an $n$-tuple 
$J = (j_1, \dots, j_n)$ such that $a_J \neq 0$ and $|J| = j_1 + \dots + j_n$ is as small as possible. If $|J| = 0,$ then $x \notin Z,$ 
and the lemma holds without blowing up $X.$ In general we proceed by induction on $|J|.$

We may assume without loss of generality that $j_1 \neq 0.$ There is a point $x^1 \in D^1_0 \subset \widehat X^1$ such that 
the functions $z_1, u_2, u_3, \dots , u_n,$ with $u_i = \frac{z_i}{z_1}$ are a system of local co-ordinates at $x^1,$ and the 
ideal of $D^1$ is generated by $z_1u_2\dots u_n.$ The ideal of the proper transform $Z^1 \subset \widehat X^1$ is generated by the function 
$z_1^{-|J|} f,$ whose expansion contains the term $a_I u_2^{j_2}\dots u_n^{j_n}.$ Thus the result follows by induction on $|J|.$
\end{proof}

\begin{para} We now assume that $C = \mathbb C,$ the complex numbers, and we denote by $X^{\an}$ the complex analytic space associated to $X.$ 
Recall that a {\em Zariski closed} subset of a complex analytic space $Y$ is a closed subset $Z \subset Y,$ which is locally defined by 
(local) analytic functions on $Y.$ We call the complement of a Zariski closed analytic subset a {\em Zariski open} subset of $Y.$  Note that if $Y\to W$ is an open embedding of complex analytic spaces, it is not in  general the case that a Zariski open $U\subset Y$ is a Zariski open of $W$; this is only the case if the closed analytic subset $Z\subset Y$ is of the form $Z=Y\cap Z'$ for some closed analytic subset $Z'\subset W$.
\end{para}

\begin{prop}\label{prop:tubular} Let $S$ be an abelian scheme and let $X/S$ be a proper torus embedding with boundary $D \subset X,$ 
and dense open $T$-torsor $U \subset X.$  Let $d = \dim X,$ and suppose that $U/S$ has unramified good reduction at a prime $p> d+2.$ 

Let $V \subset X^{\an}$ be an analytic open subset containing $D^{\an},$ and $V' \subset V$ a non-empty Zariski open subset of $V$. Then the map $$H^d(U,\F_p) \rightarrow H^d(U\cap V', \F_p)$$ is nonzero.
\end{prop}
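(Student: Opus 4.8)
The plan is to reduce the statement to Proposition~\ref{prop:restrictschem} (or rather its proof, via Corollary~\ref{cor:goodredn}), applied not to $U$ and a Zariski open of $U$, but to one of the iterated blow-ups $X^i$ and the formal completion $\widehat{X}^i$ along its boundary. The point is that $H^d(U,\F_p)$ carries a distinguished nonzero class, namely the logarithmic top form $\omega_r \otimes \omega_t \in \Gamma(X,\Omega^d_X(\log D))$ constructed exactly as in the proof of Proposition~\ref{prop:edTtorsor} (here $\omega_r$ is a nonzero top form on the abelian scheme $S$ and $\omega_t = \frac{dz_1}{z_1}\wedge\cdots\wedge\frac{dz_t}{z_t}$ is the $T$-invariant form on the torus fibers), so that $h^{0,d}_{(X,D)} \neq 0$. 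I want to show this class — or more precisely its image under cohomology — survives restriction to $U \cap V'$.

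**Key steps.** First I would observe that since $V' \subset V$ is Zariski open and $D^{\an}\subset V$, the complement $Z = V \setminus V'$ (closure taken in $X^{\an}$, or rather its extension to a closed analytic, and then algebraic, subset) is a codimension-$\geq 1$ subset; after shrinking we may take $Z$ to be a Cartier divisor, and it meets a formal/analytic neighborhood of $D$. Now invoke Lemma~\ref{lem:blowups}: there is a dense open $W \subset S$ and an integer $i\geq 1$ such that $D_0^i|_W \nsubseteq Z^i$, where $Z^i$ is the proper transform of $Z$ in $\widehat{X}^i$. Replacing $S$ by $W$ (which only shrinks $U$, hence can only help, since the restriction map $H^d(U,\F_p)\to H^d(U|_W,\F_p)$ is compatible and $h^{0,d}$ is unchanged on a dense open of an abelian scheme — one must check this does not kill the class, using that $\omega_r$ restricts to a nonzero form and the torus part is unaffected), I reduce to the situation where some stratum-zero boundary component $D_0^i$ is not contained in $Z^i$. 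By Lemma~\ref{toricgoodredn}, $(X^i, D^i)$ still has unramified good reduction at $p$ (the fan only gets subdivided, $U$ is unchanged). Then I would run the argument of Proposition~\ref{prop:keyrestrict}/Proposition~\ref{prop:restrictschem}: via prismatic cohomology of the formal model $\widehat{X}^i{}^\circ$ over $\O_K$, the nonvanishing of the top logarithmic de Rham class on the special fiber (which follows from $h^{0,d}_{(X^i,D^i)}\neq 0$ and the fact that a top form on $X_k$ is nonzero on any dense open of $X_k^i$, in particular on the complement of $Z^i{}\otimes k$) forces the image of $H^d_{\et}(\widehat{X}^i_{D^i,C},\F_p) \to H^d_{\et}(\widehat{W}_C,\F_p)$ to be nonzero, where $\widehat{W}$ is the formal complement of $Z^i\otimes k$. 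Finally, a commutative diagram as in the proof of Proposition~\ref{prop:restrictschem} (using Lemma~\ref{etalecomparison} to compare $H^d_{\et}(U,\F_p)$, which by Lemma~\ref{lem:torusembedding} is $H^d$ of the formal/adic completion with log structure) identifies this with the nonvanishing of $H^d(U,\F_p) \to H^d(U\cap V',\F_p)$, noting $\widehat{X}^i$ and $X^i$ have the same boundary complement $U$ and that $\widehat{W}_C$ lands inside $(U\cap V')^{\an}$ after the blow-up identifications.

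**Main obstacle.** The hard part will be the bookkeeping connecting the analytic open $V'\subset V$ — which is \emph{not} a Zariski open of $X^{\an}$, as the remark preceding the proposition warns — to a formal open of $\widehat{X}^i$ to which the prismatic machinery applies. Specifically: after the blow-ups provided by Lemma~\ref{lem:blowups}, the proper transform $Z^i$ no longer contains the boundary component $D_0^i$, so near a generic point of $D_0^i$ the set $Z^i$ \emph{does} cut out a genuine closed subset of an open neighborhood of the whole boundary $D^i$ inside $X^{i,\an}$; one must argue that the formal-scheme restriction $H^d_{\et}(\widehat{X}^i_{D^i,C},\F_p) \to H^d_{\et}(\widehat{W}_C,\F_p)$ is compatible with the analytic restriction $H^d(U\cap V',\F_p) \leftarrow H^d(U,\F_p)$ through the zig-zag of maps induced by $X^i\to X$, the completion maps, and the comparison isomorphisms of Lemma~\ref{etalecomparison}. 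A secondary technical point is checking that after passing to the dense open $W\subset S$ the top logarithmic form, hence the relevant de Rham class, genuinely remains nonzero on the special fiber — this should follow because restriction to a dense open of a smooth proper special fiber is injective on $H^0$ of $\Omega^d(\log D)$, exactly as invoked at the end of the proof of Proposition~\ref{prop:keyrestrict}.
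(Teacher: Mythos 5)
Your opening reductions match the paper's: the logarithmic top form $\omega_r\otimes\omega_t$ gives $h^{0,d}_{(X,D)}\neq 0$, and Lemma~\ref{lem:blowups} (applied to the formal completion of $Z=V\setminus V'$ along $D$) lets you blow up so that a boundary component $D_0'$ avoids $Z$ over a dense open $W\subset S$. But the core of your argument has a genuine gap. You propose to feed the complement of $Z^i$ into the prismatic restriction machinery, speaking of ``the formal complement of $Z^i\otimes k$'' inside the $p$-adic formal model $\widehat X^{i\,\circ}$ over $\O_K$. This is not meaningful: $Z$ is a closed \emph{analytic} subset of the complex analytic open $V\subset X^{\an}$, so it is neither algebraic nor defined over $\O_K$; it has no reduction mod $p$ and no proper transform in the $p$-adic formal completion. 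The formal scheme $\widehat X$ appearing in Lemma~\ref{lem:blowups} is the completion of $X$ along the boundary divisor $D$ \emph{over $\CC$}, which is a different object from the $p$-adic formal completions used in Propositions~\ref{prop:keyrestrict} and~\ref{prop:restrictschem}; those results require $W$ to be a Zariski open of an algebraic variety over the $p$-adically complete field $C$, precisely so that one can take the closure of the complement in the integral model and complete along the special fiber. You flag this as the ``main obstacle,'' but the proposed fix (a zig-zag of comparison maps between the analytic restriction and a formal-scheme restriction) does not exist as stated, because there is no map from $H^d(U\cap V',\F_p)$ to the \'etale cohomology of the generic fiber of any such $p$-adic formal open.

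The missing idea is that the paper handles the analytic open \emph{purely topologically} and applies the arithmetic input only to a genuine Zariski restriction. After the blow-up one has, over a dense open $W\subset S$ where $X|_W\simeq W\times X_0$, a boundary component $D_0'=W\times x_0$ disjoint from $Z$. By \L ojasiewicz there is a compact $W^-\subset W$ with $W^-\hookrightarrow W$ a homotopy equivalence; choosing a small ball $Y_0$ around the $T$-fixed point $x_0$ with $Y_0\setminus D_0\simeq U_0$ a homotopy equivalence, and with $W^-\times Y_0\subset V$ and disjoint from $Z$ (compactness of $W^-$ is what makes this possible), one gets $Y^*=(W^-\times Y_0)\setminus D\subset U\cap V'$ together with homotopy equivalences $Y^*\simeq U|_{W^-}\simeq U|_W$. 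Corollary~\ref{cor:goodredn} is then invoked only for the algebraic restriction $H^d(U,\F_p)\to H^d(U|_W,\F_p)$, which is nonzero since $h^{0,d}_{(X,D)}\neq 0$, and the nonvanishing is transported into $H^d(U\cap V',\F_p)$ via the factorization through $H^d(Y^*,\F_p)$. (Incidentally, your side worry about the class dying on $U|_W$ is exactly what Corollary~\ref{cor:goodredn} rules out, so that part is fine; and Lemma~\ref{lem:torusembedding} is a $K(\pi,1)$ statement, not a formal/adic comparison, so it cannot play the role you assign it.)
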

\begin{proof} After replacing $X$ by a blow up, we may assume that $X/S$ is smooth and $D \subset$ $X$ is a normal crossings divisor. 
Such blow ups are obtained by subdividing the rational  cone decomposition of $X_*(T)_{\Q}$ defining $X,$ see \cite[p 48]{Fultontoric}.  
Then $(X,D)$ has unramified good reduction at $p$ by Lemma \ref{toricgoodredn}. 

Next the same proof as in Proposition \ref{prop:edTtorsor} shows that $h^{0,d}_{(X,D)} \neq 0.$ In fact the differential $\omega_r\otimes \omega_s$ 
defined there is in $H^0(X, \Omega^d_X(\log D))$ for any torus embedding $(X,D)$ with open dense $T$-torsor $U \subset X.$ 

Shrinking $V'$ as necessary, we may assume that $Z =  V - V'$ has codimension $1$ in $V.$ Let $\widehat Z \subset \widehat X$ denote the formal completion of 
$Z$ along $D.$ We apply Lemma \ref{lem:blowups} to $\widehat Z.$ Then, after replacing $X$ by a blow up and $Z$ by its proper transform, 
we may assume there 
is a non-empty Zariski open $W \subset S$ such that $D_0|_W \nsubseteq Z.$ Shrinking $W$ if necessary, since $D_0$ is \'etale over $S,$ 
we may assume that there is an irreducible component $D'_0$ of $D_0$ which does not meet $Z,$ and that $X|_W = W \times X_0$ 
for a toric variety $X_0$ over $\mathbb C.$ Then $D_0'$ has the form $W \times x_0$ for a $T$-fixed point $x_0 \in X_0.$ 
We write $U_0 \subset X_0$ for the open $T$-orbit.

By \cite[Thm.~4]{Loj}, there exists a compact subset $W^- \subset W$ such that the inclusion induces a homotopy equivalence. 
Denote by $D_{X_0}$ the boundary of $X_0.$ Let $Y_0 \subset X_0$ be a ball around $x_0$ which is small enough that 
$Y_0 \backslash D_{X_0} \rightarrow U_0$ is a homotopy equivalence, 
$W^- \times  Y_0 \subset V,$ and $W^- \times Y_0$ does not meet $Z.$ 
Here we are using the compactness of $W^-$ for the second and third properties.  
Let  
$$Y^* = (W^- \times Y_0)\backslash D \simeq W^-\times (Y_0\backslash D_{X_0}).$$ 
Then $Y^*   \rightarrow U_{W^-} \simeq W^-\times U_0$ is a homotopy equivalence, and $Y^* \subset U \cap V'.$   Now consider the composite 
$$ H^d(U, \F_p) \rightarrow H^d(U|_{W}, \F_p) \rightarrow H^d(U|_{W^-}, \F_p) \rightarrow H^d(Y^*, \F_p). $$
Since  $h^{0,d}_{X,D} \neq 0,$ and $(X,D)$ has unramified good reduction at $p,$ we may apply 
Corollary \ref{cor:goodredn} to deduce that the first map is nonzero. The other two maps are induced by homotopy equivalences, hence 
are isomorphisms. Thus the composite map is nonzero. However, as $Y^* \subset U \cap V',$ the composite map factors through 
$H^d(U\cap V', \F_p)$ which implies the Lemma.
\end{proof}

\begin{para}\label{analyticedp} We are nearly ready to show the variant of Proposition \ref{prop:edTtorsor} that will be used in the next subsection. 
To formulate it we need a notion of essential dimension at $p$ for complex analytic spaces. This is defined in a similar way as for algebraic varieties, but there is an important difference in that we do not insist the auxiliary coverings of order prime to $p$ are \'etale. 
Unlike the algebraic situation, one cannot reduce ramified coverings 
to the unramified case, because the notion of Zariski open subsets of complex analytic spaces is not transitive. 

Let $V_1 \rightarrow V$ be a finite map of (reduced) complex analytic spaces. The {\it essential dimension} $\ed(V_1/V)$ is the smallest integer $e$ 
such that for some dense Zariski open $V' \subset V, $ there exists a finite map of analytic spaces $Y_1 \rightarrow Y,$ and a 
map $V' \rightarrow Y$ 
such that $V_1|_{V'} \rightarrow V'$ is isomorphic to the normalization of $ V'\times_YY_1.$ The $p$-{\it essential dimension} $\ed(V_1/V_2; p )$ 
is defined as the minimum value of $\ed(\tilde V_1/\tilde V),$ where $\tilde V \rightarrow V$ runs over finite maps of degree prime to $p,$ 
and $\tilde V_1$ is the normalization of $\tilde V\times_VV_1.$ Note that the map $\tilde V \rightarrow V$ is finite flat over some Zariski open; 
its degree is defined as the degree over any such Zariski open.
\end{para}

\begin{para} Keep the notation of Proposition \ref{prop:tubular} , and consider a surjection onto a finite Heisenberg group 
$\pi_{1,\et}(U,\bar x) \rightarrow \bar E,$ as in \ref{para:Heisenbergquot}. We denote by $X(\bar E) \rightarrow X$ the normalization 
of $U(\bar E) \rightarrow U.$
\end{para}

\begin{cor}\label{cor:analyticedp} With the assumptions of Proposition \ref{prop:tubular}, suppose that 
$p > \dim U + \frac{1}{2} (\dim U_{\bar s} + 1),$ and write $V(\bar E):= X(\bar E)^{\an}|_{V}.$ 
Then $$\ed(V(\bar E)/V; p) = d.$$
\end{cor}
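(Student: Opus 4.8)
The plan is to mimic the proof of Proposition \ref{prop:edTtorsor}, replacing the algebraic restriction statement (Corollary \ref{cor:goodredn}) by the analytic one (Proposition \ref{prop:tubular}), and replacing the cohomological-dimension argument for affine varieties by the corresponding bound for Stein spaces. First, the upper bound $\ed(V(\bar E)/V;p)\leq d$ is immediate: $V(\bar E)$ is pulled back from $U(\bar E)\to U$, which is already of dimension $d=\dim U=\dim X$, so in fact $\ed(V(\bar E)/V;p)\le \dim V \le d$. (One should note $V$ is an analytic neighborhood of $D^{\an}$ inside $X^{\an}$, so $\dim V = d$.) For the lower bound, one argues by contradiction. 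Suppose $\ed(V(\bar E)/V;p)<d$. By definition there is a finite map $\widetilde V\to V$ of degree prime to $p$, a dense Zariski open $V'\subset\widetilde V$, a finite map of analytic spaces $Y_1\to Y$ with $\dim Y<d$, and a map $V'\to Y$ such that $\widetilde V(\bar E)|_{V'}$ is the normalization of $V'\times_Y Y_1$, where $\widetilde V(\bar E)$ is the normalization of $\widetilde V\times_V V(\bar E)$.

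The key cohomological input is the composite
$$ H^d(\bar E,\F_p)\to H^d(\pi_1(U,\bar x),\F_p)\xrightarrow{\sim} H^d(U,\F_p)\to H^d(U\cap V',\F_p)\to H^d(\text{preimage in }\widetilde V),$$
where the first arrow is surjective by Proposition \ref{cohredn} (using $p>\dim U+\tfrac12(\dim U_{\bar s}+1)$, which forces the hypothesis of that proposition via $\rk H=\dim S$, $\rk N=\dim U_{\bar s}$), the isomorphism is Lemma \ref{lem:torusembedding}, and the map $H^d(U,\F_p)\to H^d(U\cap V',\F_p)$ is nonzero by Proposition \ref{prop:tubular} (applied with the Zariski open $V'$ of $V$ — here one must be slightly careful: $V'$ as produced by the definition of $\ed$ lives on $\widetilde V$, so one first pushes down to a Zariski open of $V$ using that $\widetilde V\to V$ is finite flat over a Zariski open and that images of Zariski opens under such maps contain Zariski opens; alternatively one works throughout on $\widetilde V$ and uses that $\widetilde V\to V$ has degree prime to $p$ together with a transfer/trace argument on $H^d$, as in the proof of Proposition \ref{prop:essdimlowerbounds}). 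Finally, passing to the prime-to-$p$ cover is harmless on mod $p$ cohomology: the composition with the transfer map is multiplication by the degree, which is a unit mod $p$, so nonvanishing on $U\cap V'$ (downstairs) forces nonvanishing of the class restricted to the relevant open of $\widetilde V$.

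On the other hand, because $U(\bar E)$ (hence $\widetilde V(\bar E)$) is the normalization of a pullback from $Y$ with $\dim Y<d$, and because, after shrinking, $Y$ may be taken to be a Stein space (any point of a complex analytic space has a Stein neighborhood, and the relevant open of $\widetilde V$ maps into such a neighborhood in $Y$), the classifying class factors as
$$ H^d(\bar E,\F_p)\to H^d(Y,\F_p)\to H^d(\widetilde V(\bar E)|_{V'},\F_p),$$
and $H^d(Y,\F_p)=0$ since a Stein space of dimension $<d$ has no cohomology in degree $\ge d$ (Andreotti--Frankel / the fact that a Stein space of complex dimension $n$ has the homotopy type of a CW complex of real dimension $\le n$). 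This contradicts the nonvanishing established above, so $\ed(V(\bar E)/V;p)\geq d$, completing the proof.

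\emph{Main obstacle.} The genuinely delicate point is the bookkeeping around Zariski opens of analytic spaces and the prime-to-$p$ cover: unlike in the algebraic setting, a Zariski open of $\widetilde V$ need not be (the preimage of) a Zariski open of $V$, and ramified prime-to-$p$ covers cannot be reduced to étale ones (as flagged in \ref{analyticedp}). One must therefore run the whole argument on $\widetilde V$, extract from the definition of $\ed$ a Zariski open $V'\subset\widetilde V$ on which the covering is compressed, intersect with $U\cap(\widetilde V\to V)^{-1}(V)$, push forward to get a genuine Zariski open of $V$ on which Proposition \ref{prop:tubular} applies, and then use the transfer for the finite degree-prime-to-$p$ map $\widetilde V\to V$ to transport nonvanishing back up. Verifying that all these opens are still dense and that the trace argument goes through mod $p$ is the part requiring the most care; everything else is a direct transcription of the proofs of Propositions \ref{prop:edTtorsor} and \ref{prop:essdimlowerbounds}.
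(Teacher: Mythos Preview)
Your proposal is correct and follows essentially the same approach as the paper: push the Zariski open on $\widetilde V$ down to a Zariski open of $V$ (the paper does this by noting that the finite image of the closed complement is Zariski closed of positive codimension), apply Proposition~\ref{prop:tubular} there, use the transfer to lift nonvanishing to $\widetilde V'$, invoke Proposition~\ref{cohredn} for the surjectivity from $H^d(\bar E,\F_p)$, and finish with Andreotti--Frankel after shrinking $Y$ to be Stein. One small slip: in your parenthetical check of the hypothesis of Proposition~\ref{cohredn} you wrote $\rk H=\dim S$, but since $S$ is an abelian variety one has $\rk H=2\dim S$; with this correction the bound $p>\dim U+\tfrac12(\dim U_{\bar s}+1)$ matches exactly the required $p>\tfrac12(\rk H+3\rk N+1)$.
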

\begin{proof} Let $\pi: \tilde V \rightarrow V$ be a finite covering of degree prime to $p,$ and $\tilde V' \subset \tilde V $ a dense Zariski open. 
We claim that the map $H^d(U,\F_p) \rightarrow H^d(\tilde V'|_U, \F_p)$ is nonzero.

The complement $\tilde Z = \tilde V \backslash \tilde V',$ is a Zariski closed subspace of everywhere positive codimension. Hence 
$\pi(\tilde Z) \subset V$ is Zariski closed with everywhere positive codimension. Let $V' = V\backslash \pi(\tilde Z).$ 
We may replace $\tilde V'$ by the preimage of $V',$ and assume that $\pi$ restricts to $\tilde V' \rightarrow V'.$  Now consider the composite 
$$H^d(U,\F_p) \rightarrow H^d(U \cap V', \F_p) \rightarrow H^d(\tilde V'|_U, \F_p). $$
The first map is nonzero by Proposition \ref{prop:tubular}, and the second map is injective, as in the proof of Proposition \ref{prop:essdimlowerbounds}, 
hence the claim. As in the proof of Proposition \ref{prop:edTtorsor}, it follows that the composite 
$$ H^d(\bar E, \F_p) \rightarrow H^d(U,\F_p) \rightarrow  H^d(\tilde V'|_U, \F_p)$$ 
is nonzero.

Write $\tilde V'(\bar E)$ for the normalization of $V(\bar E)\times_{V}\tilde V'.$ If $\tilde V'(\bar E) \rightarrow \tilde V'$ is the normalized pullback 
of an $\bar E$-covering $Y_1 \rightarrow Y$ of dimension $< d,$ then as in the proof of Proposition \ref{prop:essdimlowerbounds}, 
the map $ H^d(\bar E, \F_p) \rightarrow H^d(\tilde V'|_U, \F_p)$ factors through 
$H^d(Y, \F_p).$ Shrinking $Y$ and $\tilde V'$ as necessary, we can assume that $Y$ is Stein. By Andreotti-Frankel \cite{AF}, $\dim Y<d$ implies that $H^d(Y,\F_p)=0.$ This gives a contradiction, and proves the corollary.
\end{proof}

\begin{cor}\label{cor:analyticedp2} Keep the assumptions of Corollary \ref{cor:analyticedp}, but suppose that $\dim S = 0.$
Then the conclusion of  \ref{cor:analyticedp} holds with no restriction on $p.$ 
\end{cor}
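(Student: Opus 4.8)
The plan is to revisit the proof of Corollary~\ref{cor:analyticedp} and isolate the one place where a hypothesis on $p$ was used, namely the appeal to Proposition~\ref{prop:tubular} (equivalently Corollary~\ref{cor:goodredn}) to see that the restriction map $H^d(U,\F_p)\to H^d(U\cap V',\F_p)$ is nonzero, together with the input $h^{0,d}_{(X,D)}\neq 0$. When $\dim S=0$, the abelian scheme $S$ is trivial, $U$ is simply a torus $T$ over $C$, and $X$ is an ordinary proper toric variety with boundary $D$; in particular $U\simeq \GG_m^d$ and $H^*(U,\F_p)=H^*(T,\F_p)=\wedge^\bullet H^1(T,\F_p)$ is an exterior algebra on $d$ generators, so $H^d(U,\F_p)\simeq \F_p$ is one-dimensional, generated by the class of $\omega_T$ (the cup product of the $d$ classes $dz_i/z_i$). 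The point is that for a torus, the analytic statement ``$H^d(U,\F_p)\to H^d(U\cap V',\F_p)$ is nonzero'' does not require prismatic cohomology at all: it can be proven by an elementary homotopy-theoretic argument, exactly mimicking the geometric part of the proof of Proposition~\ref{prop:tubular} but with the invocation of Corollary~\ref{cor:goodredn} removed.

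Concretely, first I would reduce, as in Proposition~\ref{prop:tubular}, to the case where $X$ is smooth and $D$ is a normal crossings divisor by subdividing the fan; this introduces no restriction on $p$. Then, since $\dim S=0$, Lemma~\ref{lem:blowups} degenerates: $D_0$ is already a finite set of $T$-fixed points, and after possibly one more toric blow up (again fan-theoretic, no $p$) we may assume there is a $T$-fixed point $x_0\in X$ with $x_0\notin Z:=V-V'$. Choose a small ball $Y_0\subset X^{\an}$ around $x_0$ with $Y_0\subset V$, $Y_0\cap Z=\emptyset$, and small enough that $Y_0\setminus D\hookrightarrow U=U_0$ is a homotopy equivalence (this is the standard fact that a toric variety deformation-retracts onto a neighborhood of a fixed point, or rather that the open orbit retracts; one uses the $\mathbb{R}_{>0}$-action on $\GG_m^d$). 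Then $Y_0\setminus D\subset U\cap V'$, and the composite $H^d(U,\F_p)\to H^d(U\cap V',\F_p)\to H^d(Y_0\setminus D,\F_p)$ is the composite of the restriction with an isomorphism; since $H^d(U,\F_p)\iso H^d(Y_0\setminus D,\F_p)$ is itself this same restriction up to the homotopy equivalence, it is an isomorphism of one-dimensional spaces, hence nonzero. Therefore the first map $H^d(U,\F_p)\to H^d(U\cap V',\F_p)$ is nonzero.

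With this replacement for Proposition~\ref{prop:tubular} in hand, the rest of the proof of Corollary~\ref{cor:analyticedp} goes through verbatim: given a prime-to-$p$ cover $\tilde V\to V$ and a Zariski open $\tilde V'\subset\tilde V$, one pushes forward the complement to get $V'\subset V$ and the injectivity of $H^d(U\cap V',\F_p)\to H^d(\tilde V'|_U,\F_p)$ from the trace argument; then the composite $H^d(\bar E,\F_p)\to H^d(U,\F_p)\to H^d(\tilde V'|_U,\F_p)$ is nonzero because $H^d(\bar E,\F_p)\to H^d(U,\F_p)$ is surjective by Proposition~\ref{cohredn} under the hypothesis $p>\dim U+\tfrac12(\dim U_{\bar s}+1)$ (which is the standing assumption inherited from Corollary~\ref{cor:analyticedp}, and which concerns only the group cohomology of the Heisenberg group, not good reduction); and finally, if $\tilde V'(\bar E)\to\tilde V'$ were a normalized pullback of an $\bar E$-cover of dimension $<d$, the map would factor through $H^d(Y,\F_p)$ for $Y$ Stein of dimension $<d$, which vanishes by Andreotti--Frankel \cite{AF}, a contradiction. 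The only subtlety — and the main thing to check carefully — is the homotopy equivalence $Y_0\setminus D\simeq U$: one must verify that a sufficiently small analytic ball around a $T$-fixed point of a smooth toric variety, with the boundary divisor removed, is homotopy equivalent to the open torus orbit. This is standard (it follows from the local structure $X\simeq \AA^n$ near a smooth fixed point, with $D$ the coordinate hyperplanes and $U$ the coordinate torus), but it is the one place where care is needed, and it is precisely the statement that makes no reference to characteristic $p$, which is why the restriction on $p$ disappears.
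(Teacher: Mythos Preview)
Your argument correctly removes one of the two restrictions on $p$, but not the other. The statement asserts that the conclusion holds \emph{with no restriction on $p$}; the phrase ``keep the assumptions of Corollary~\ref{cor:analyticedp}'' refers to the geometric setup (the torus embedding, the open $V$, the quotient $\bar E$), not to the numerical inequalities on $p$. Your write-up, however, still invokes Proposition~\ref{cohredn} under the hypothesis $p > \dim U + \tfrac{1}{2}(\dim U_{\bar s}+1)$, calling it ``the standing assumption inherited from Corollary~\ref{cor:analyticedp}.'' That hypothesis is precisely one of the restrictions you are supposed to eliminate, so as written your proof does not establish the corollary.

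The missing observation is the one the paper makes: when $\dim S = 0$, the extension \eqref{eqn:propextn} has $\pi_{1,\et}(S,\bar s)^p = 0$, so $\pi_{1,\et}(U,\bar x)^p$ is already abelian and $\bar E$ is simply its quotient mod $p$, an elementary abelian $p$-group. Since $H^*(U,\F_p)$ is the exterior algebra on $H^1(U,\F_p) = H^1(\bar E,\F_p)$ (a fact you yourself noted at the outset), the map $H^d(\bar E,\F_p) \to H^d(U,\F_p)$ is surjective for the trivial reason that the target is generated by degree-one classes. No appeal to Proposition~\ref{cohredn}, and hence no bound on $p$, is needed. Your treatment of the other restriction --- replacing the appeal to Corollary~\ref{cor:goodredn} in Proposition~\ref{prop:tubular} by the direct homotopy argument near a $T$-fixed point --- is correct and is exactly what the paper's proof amounts to when specialized to $\dim S = 0$ (the map $H^d(U,\F_p)\to H^d(U|_W,\F_p)$ becomes the identity). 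So the fix is simply to replace your invocation of Proposition~\ref{cohredn} by the one-line remark above.
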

\begin{proof} The restriction on $p$ in the proof of \ref{cor:analyticedp} come from the application of Corollary \ref{cor:goodredn} 
and Proposition \ref{cohredn}, which guarantee that the maps 
$$ H^d(\bar E, \F_p) \rightarrow H^d(U, \F_p) \rightarrow H^d(U|_W, \F_p) $$ 
are injective for $W \subset S$ dense open. When $\dim S = 0,$ the second map is vacuously bijective, and the first map is surjective, 
because in this case $\pi_{1,\et}(U,\bar x)$ is abelian, $\bar E = \pi_{1,\et}(U,\bar x)/p\pi_{1,\et}(U,\bar x),$ and the cohomology ring 
$H^\bullet(U, \F_p)$ is generated in degree $1.$
\end{proof}

\begin{para}
We remark that Corollary \ref{cor:analyticedp2} is originally due to Burda \cite{Burda}, and our proof in this case 
reduces to a variant of his. The key point in the argument is the construction of the subset $Y^*$ in the proof of Propostion \ref{prop:tubular}. 
Burda does this by considering annuli in $U$ with carefully chosen radii, rather than by using blow ups; see \cite[Thm.~18]{Burda}.
\end{para}

\subsection{Shimura Varieties}

\begin{para} Let $(G,X)$ be a Shimura datum \cite[\S 1]{Deligne}. Recall that this consists of a reductive group $G$ over $\Q,$ together 
with a $G(\RR)$-conjugacy class, $X,$ of homomorphisms $h:  \SS =  \Res_{\CC/\RR} \GG_m \rightarrow G$ satisfying certain conditions.
These imply, in particular, that $X$ is a Hermitian domain, and that for any neat compact open $K \subset G(\AA_f),$ 
the quotient 
$$ \Sh_K(G,X) = G(\Q) \backslash X \times G(\AA_f)/K $$
has the structure of a complex algebraic variety. Here $\AA_f$ denote the finite adeles over $\Q.$

For $h \in X,$ define a $G$-valued cocharacter $\mu_h$ over $\CC,$ as follows. 
For a $\CC$-algebra $R,$ we have $R\otimes_{\RR} \CC = R \oplus c^*(R),$ where $c$ denotes complex conjugation. 
The first factor gives an inclusion $R^\times \subset (R\otimes_{\RR} \CC)^\times,$ which gives a map $\GG_m \rightarrow \SS$ 
over $\CC,$ and $\mu_h$ is the composite of this map and $h.$
The cocharacter $\mu_h$ is miniscule \cite[1.2.2]{Deligne}, and the axioms for a Shimura variety imply that 
it is nontrivial.
\end{para}

\begin{para}
Now consider the Dynkin diagram $\Delta(G)$ of $G,$ which is equipped with an action of $\Gal(\bar \Q/\Q).$ 
To simplify the discussion, we assume from now on that $G^{\ad}$ is $\Q$-simple, so that the $\Gal(\bar \Q/\Q)$-action 
on $\Delta(G)$ is {\em transitive}. The subgroup of $\Gal(\bar \Q/\Q)$ that acts on $\Delta(G)$ trivially corresponds to a field $K_{\Delta}$ that is either CM or a totally real field. The action of complex conjugation $c \in \Gal(K_{\Delta}/\Q)$ is given by the 
{\em opposition involution} of the root system of $G.$ 
Recall that this is given by $(-1)\circ w_0,$ where $w_0$ is the longest element of the Weyl group $W_G.$ 
Thus $K_D$ is a totally real field exactly when $-1 \in W_G.$ 

Fix a maximal torus $T \subset G^{\ad}$ and a set of positive roots $\Delta^+\subset X^*(T)$ for $G.$ 
The vertices of $\Delta(G)$ correspond to the simple roots in $\Delta^+.$ 
For $\alpha \in \Delta(G)$, let $\mu_{\alpha} \in X_*(T)$ be the cocharacter 
which takes the value $1$ on $\alpha$ and vanishes otherwise.
For a subset $R \subset \Delta(G)$, define $\mu_R \in X_*(T)$ by $\mu_R:= \prod_{\alpha \in R} \mu_{\alpha}.$
If we do not specify $\Delta^+,$ then the conjugacy class of $\mu_R$ is still well defined, and we will denote it  
by $[\mu_R].$

Let $P_R \subset G$ be the parabolic associated to $\mu_R.$ 
Recall that $P_R$ is characterized by the condition that its Lie algebra $\Lie P_R$ is the sum of root spaces on which $\mu_R$ is 
non-negative \cite[XXVI, 1.4]{SGA3}. As for $\mu_R,$ if we do not specify $\Delta^+$ then the conjugacy class of $P_R$ 
is still well defined, and we denote it by $[P_R].$  

Since $\mu_h$ is miniscule, it corresponds to a collection of vertices $\varSigma \subset \Delta(G),$ 
with each component of $\Delta(G)$ containing at most one element of $\varSigma.$ 
Let $\tilde \varSigma = \varSigma \cup c(\varSigma).$

\begin{lem}\label{lem:strucunipot} Let $\mu_{\tilde \varSigma}$ be in $[\mu_{\tilde \varSigma}],$ $P_{\tilde \varSigma}$ the associated 
parabolic and $U_{\tilde \varSigma} \subset P_{\tilde \varSigma}$ the unipotent radical of $P_{\tilde \varSigma}.$ 
Then $U_{\tilde \varSigma}$ is a central extension of additive groups
$$ 0 \rightarrow Z(U_{\tilde \varSigma}) \rightarrow U_{\tilde \varSigma} \rightarrow U_{\tilde \varSigma}/Z(U_{\tilde \varSigma})\rightarrow 0,$$ 
and $U_{\tilde \varSigma} = Z(U_{\tilde \varSigma})$ if $c$ fixes $\varSigma.$
The cocharacter $w_{\tilde\varSigma} = \mu_{\varSigma}\mu_{c(\varSigma)}$ acts with weight $2$ on 
$Z(U_{\tilde \varSigma})$ and weight $1$ on $U_{\tilde \varSigma}/Z(U_{\tilde \varSigma}).$
\end{lem}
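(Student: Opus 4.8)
The statement is entirely about the structure of the unipotent radical $U_{\tilde\varSigma}$ of a parabolic attached to a miniscule-type cocharacter, so the natural approach is to work with the root space decomposition with respect to the maximal torus $T$ and compute the grading induced by $w_{\tilde\varSigma} = \mu_{\varSigma}\mu_{c(\varSigma)}$. First I would fix $\Delta^+$ so that $\mu_{\tilde\varSigma} = \mu_{\varSigma}\mu_{c(\varSigma)}$ is dominant; then $\Lie P_{\tilde\varSigma}$ is the sum of root spaces $\mathfrak g_\beta$ with $\langle\mu_{\tilde\varSigma},\beta\rangle\ge 0$, and $\Lie U_{\tilde\varSigma} = \bigoplus_{n\ge 1}\mathfrak g_n$ where $\mathfrak g_n = \bigoplus_{\langle w_{\tilde\varSigma},\beta\rangle = n}\mathfrak g_\beta$. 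The key numerical input is that $\varSigma$ consists of vertices, at most one in each component of $\Delta(G)$, which are of \emph{miniscule/cominuscule} type: writing a positive root $\beta = \sum_\alpha c_\alpha(\beta)\,\alpha$ in the simple root basis, the coefficient $c_\alpha(\beta)$ of a miniscule node $\alpha$ is $0$ or $1$ for every positive root $\beta$. Since $\langle\mu_\varSigma,\beta\rangle = \sum_{\alpha\in\varSigma}c_\alpha(\beta)$ and similarly for $c(\varSigma)$, and $\varSigma$, $c(\varSigma)$ each contain at most one node per component, the value $\langle\mu_\varSigma,\beta\rangle$ lies in $\{0,1\}$ and likewise $\langle\mu_{c(\varSigma)},\beta\rangle\in\{0,1\}$, so $\langle w_{\tilde\varSigma},\beta\rangle\in\{0,1,2\}$. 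Hence $\Lie U_{\tilde\varSigma} = \mathfrak g_1\oplus\mathfrak g_2$ with $\mathfrak g_2$ the top piece.

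Next I would extract the group-theoretic statement from this grading. The cocharacter $w_{\tilde\varSigma}$ defines a $\GG_m$-action on $U_{\tilde\varSigma}$ by conjugation, and the fixed-point/filtration formalism for unipotent groups with a $\GG_m$-action (each graded piece exponentiates, brackets add weights) shows that $\exp(\mathfrak g_2)$ is a normal subgroup. Because $[\mathfrak g_1,\mathfrak g_1]\subseteq\mathfrak g_2$ and $[\mathfrak g_i,\mathfrak g_2] = 0$ for $i\ge 1$ (weights would exceed $2$), the subgroup $Z(U_{\tilde\varSigma}) := \exp(\mathfrak g_2)$ is central, and $U_{\tilde\varSigma}/Z(U_{\tilde\varSigma}) \cong \exp(\mathfrak g_1)$ is abelian (indeed a vector group since we are in characteristic zero, or more generally whenever $p$ is large, though the statement as phrased is over $\CC$). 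This gives the asserted central extension, and $w_{\tilde\varSigma}$ acts with weight $2$ on $Z(U_{\tilde\varSigma})$ and weight $1$ on the quotient by construction. I should check that $Z(U_{\tilde\varSigma})$ is genuinely the center and not just central — but any root vector $X_\beta\in\mathfrak g_1$ has a bracket partner: since $w_{\tilde\varSigma}$ is dominant and nontrivial on the relevant component, there is a root $\gamma$ with $\langle w_{\tilde\varSigma},\gamma\rangle = 1$ and $\beta+\gamma$ a root of weight $2$, so $X_\beta$ is not central; this uses that $\mathfrak g_2\ne 0$, i.e.\ that $w_{\tilde\varSigma}$ actually takes the value $2$ somewhere, which holds precisely because $\tilde\varSigma$ meets a given component in two (not necessarily distinct) nodes. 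When $c$ fixes $\varSigma$, we have $\varSigma = c(\varSigma)$ and $w_{\tilde\varSigma} = \mu_\varSigma^2$, but $\mu_{\tilde\varSigma}$ should be interpreted with $\tilde\varSigma$ as a \emph{set}, i.e.\ $\tilde\varSigma = \varSigma$, so $P_{\tilde\varSigma} = P_\varSigma$ is the miniscule parabolic and $U_{\tilde\varSigma}$ is abelian — that is the cominuscule case where the Levi acts on $U_{\tilde\varSigma}$ with the abelian unipotent radical, so $U_{\tilde\varSigma} = Z(U_{\tilde\varSigma})$. (Here one must be slightly careful about the convention: $w_{\tilde\varSigma} = \mu_\varSigma\mu_{c(\varSigma)}$ still acts with weight $2$ on all of $U_{\tilde\varSigma} = \mathfrak g_2$, consistent with the statement.)

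\textbf{Main obstacle.} The only real content is the combinatorial claim that for a miniscule node $\alpha$ the coefficient $c_\alpha(\beta)\in\{0,1\}$ for all positive roots $\beta$, together with the observation that $\varSigma$ and $c(\varSigma)$ meet each component in at most one node — these together pin the grading to $\{0,1,2\}$. This is classical (it is exactly the characterization of miniscule coweights / cominuscule parabolics, and the opposition involution $c$ permutes the miniscule nodes), so I would cite it rather than reprove it, e.g.\ via the standard classification of miniscule representations or Bourbaki's tables, and note that the axioms for a Shimura datum force $\mu_h$, hence $\varSigma$, to be of this type. The slightly delicate bookkeeping — distinguishing the case $c(\varSigma) = \varSigma$ (where $\tilde\varSigma$ has "the same" nodes but $w_{\tilde\varSigma}$ is the square) from $c(\varSigma)\ne\varSigma$ (where $\tilde\varSigma$ has two distinct nodes per component and $U_{\tilde\varSigma}$ is a genuine Heisenberg-type two-step group) — is the place where one has to be most careful with conventions, but it is not conceptually hard. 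Everything else (exponentiating graded pieces, weights add under brackets, centrality) is the standard structure theory of graded nilpotent Lie algebras and their unipotent groups in characteristic zero.
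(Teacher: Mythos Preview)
Your approach is essentially the same as the paper's. Both proofs reduce to the root-space grading induced by $w_{\tilde\varSigma}$, use the miniscule property of the nodes in $\varSigma$ (coefficients of these simple roots in any positive root are $0$ or $1$) to see that the only weights occurring in $\Lie U_{\tilde\varSigma}$ are $1$ and $2$, and then read off the two-step nilpotent structure and the weight claims; the case $c(\varSigma)=\varSigma$ is handled identically as the cominuscule/abelian case. The paper differs only cosmetically in that it first decomposes $G^{\ad}_{\RR}$ into absolutely simple factors and argues componentwise, whereas you phrase things globally via the grading $\mathfrak g_n$; and on the point that $\exp(\mathfrak g_2)$ is \emph{exactly} the center (not merely central), the paper is just as terse as your sketch, simply asserting that ``one sees'' the root spaces in $\Lie Z(U_{\tilde\varSigma})$ are those containing both $\alpha$ and $c(\alpha)$.
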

\begin{proof} Write $G^{\ad}_{\RR} = \prod_{i=1}^r G^{\ad}_{\RR,i},$ where each factor $G^{\ad}_{\RR,i}$ is absolutely simple. 
For each $i$ the Dynkin diagram of $G^{\ad}_{\RR,i}$ corresponds to a component  $\Delta(G)_i$ of $\Delta(G),$ and it suffices to prove 
the statement of the Lemma with $U_{\tilde\varSigma,i} = U_{\tilde\varSigma} \cap G^{\ad}_{\CC,i}$ in place of $U_{\tilde\varSigma}.$ 
We set  $\varSigma_i =  \varSigma \cap \Delta(G)_i$ and $\tilde \varSigma_i = \tilde \varSigma \cap \Delta(G)_i.$

The root spaces which appear in $\Lie U_{\tilde \varSigma}$ are exactly those which correspond to roots whose expressions as a sum of simple roots in $\Delta^+$ contain an element of $\tilde\varSigma.$ 
If $\varSigma_i$ is empty, then $U_{\tilde\varSigma,i} = 0,$ and there is nothing to prove. Suppose that $\varSigma_i$ is non-empty.
 
If $c$ fixes $\varSigma_i$ then $\tilde \varSigma_i = \varSigma_i$ and $\mu_{\varSigma_i}$ is nonzero on exactly one simple root in $\Delta(G)_i.$ 
As $\mu_h$ is miniscule, so is $\mu_{\varSigma_i},$ so the root in $\varSigma_i$ can appear in the expression for a root in $\Delta^+$ with multiplicity at most $1.$ This implies that $U_{\tilde \varSigma,i}$ is abelian, and $\mu_{\varSigma_i}$ acts with weight $1$ on $U_{\tilde \varSigma,i},$ 
so $\mu_{\varSigma_i}\mu_{c(\varSigma_i)} = \mu_{\varSigma_i}^2$ acts with weight $2.$  

Now suppose that $c$ does not fix $\varSigma_i.$ Then $\tilde \varSigma_i$ has exactly two elements 
and $\mu_{\tilde \varSigma}$ is nonzero on the two corresponding simple roots, which are exchanged by $c.$ 
If $\alpha \in \varSigma_i,$ and $e_{\alpha}$ is the corresponding root, then $e_{\alpha}$ and $e_{c(\alpha)}$ both appear in the 
expression for the longest root in $\Delta^+$ as a sum of simple roots. Using this, one sees that the root spaces appearing in 
$\Lie Z(U_{\tilde \varSigma,i})$ 
correspond to $\beta \in \Delta^+$ whose expression as a sum of simple roots contains both $\alpha$ and $c(\alpha),$ 
and that $[U_{\tilde \varSigma_i}, U_{\tilde \varSigma_i}] \subset Z(U_{\tilde \varSigma_i}).$ 
Moreover  the description of the root spaces appearing in the Lie algebras of $U_{\tilde\varSigma,i}$ and $Z(U_{\tilde \varSigma,i})$ 
implies the claim about the weights of $\mu_{\varSigma_i}\mu_{c(\varSigma_i)} = \mu_{\tilde\varSigma_i}.$
\end{proof}

\begin{para}
We will be interested in the following condition
\begin{equation}\label{rationalparabolic}
\text{\it The conjugacy class $[\mu_{\tilde \varSigma}]$ contains a cocharacter defined over $\Q.$}
\end{equation}
\end{para}

Note that this condition implies that $[\mu_{\tilde \varSigma}]$ is fixed by $\Gal(\bar \Q/\Q).$ 
We can rephrase this condition in terms of the parabolic $P_R \subset G_{\CC}$ associated to $\mu_R.$ 
\end{para}

\begin{lem}\label{lem:equivrational} The conjugacy class $[\mu_{\tilde \varSigma}]$ contains a cocharacter defined over $\Q$ 
if and only if $[P_{\tilde \varSigma}]$ contains a parabolic defined over $\Q.$
\end{lem}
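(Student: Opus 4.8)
The plan is to prove the two implications separately, using the dictionary between cocharacters and parabolics together with a Galois-descent argument. For the easy direction, suppose $[\mu_{\tilde\varSigma}]$ contains a cocharacter $\mu$ defined over $\Q$. Since $P_{\tilde\varSigma}$ is by construction the parabolic attached to $\mu_{\tilde\varSigma}$ (its Lie algebra is the sum of the non-negative weight spaces of $\mu_{\tilde\varSigma}$, cf.\ \cite[XXVI, 1.4]{SGA3}), the parabolic attached to the conjugate $\mu$ is a conjugate of $P_{\tilde\varSigma}$, hence lies in $[P_{\tilde\varSigma}]$; and it is defined over $\Q$ because $\mu$ is, since the formation of the associated parabolic commutes with the Galois action. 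So $[P_{\tilde\varSigma}]$ contains a $\Q$-rational parabolic.

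For the converse, suppose $P\in[P_{\tilde\varSigma}]$ is defined over $\Q$. The key point is that $G$ acts transitively (over $\Q$, after replacing $G$ by $G^{\ad}$, which does not affect parabolics or the relevant cocharacters up to the center) on the conjugacy class of parabolics of a fixed type, and the stabilizer of $P$ is $P$ itself; thus the variety of cocharacters in $[\mu_{\tilde\varSigma}]$ whose associated parabolic is $P$ is a torsor under $P$ (more precisely, the Levi quotient acts, and the cocharacters with associated parabolic $P$ that factor through a given Levi $L\subset P$ form a single $L$-conjugacy class, which is a homogeneous space under $L^{\ad}$). Since this torsor is over a field of characteristic $0$ and the group $P$ (equivalently its Levi) is connected, it has a $\Q$-point by the vanishing of $H^1$ of the Galois group with values in a connected group — actually one argues more directly: choose a Levi subgroup $L\subset P$ defined over $\Q$ (a $\Q$-rational parabolic has a $\Q$-rational Levi), and the set of cocharacters in $[\mu_{\tilde\varSigma}]$ that are central in $L$ is a single conjugacy class under $L/Z(L)$, acting on a connected variety; being a homogeneous space under a connected group in characteristic $0$, it has a rational point precisely when the relevant cohomology obstruction vanishes. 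Here one uses that $\mu_{\tilde\varSigma}$ is, up to $W$-conjugacy, characterized by the pair $(P_{\tilde\varSigma}, \text{central cocharacter of its Levi})$, and that $[\mu_{\tilde\varSigma}]$ is already $\Gal(\bar\Q/\Q)$-stable (as noted after \eqref{rationalparabolic}). Concretely: $\mu_{\tilde\varSigma}$ can be taken to be the cocharacter through the center of $L$ that is trivial on the derived group and acts on $\mathrm{Lie}(P)/\mathrm{Lie}(L)$ with the prescribed weights; this cocharacter is canonically attached to $(G,P)$ up to the action of $Z(L)$, hence is $\Q$-rational once $P$ is.

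I expect the main obstacle to be making the second implication clean: one must be careful that "the cocharacter attached to a $\Q$-rational Levi of a $\Q$-rational parabolic" is genuinely $\Q$-rational and lands in the right conjugacy class $[\mu_{\tilde\varSigma}]$ and not merely in its $W$-orbit union with Galois twists. The cleanest route is probably to use that a $\Q$-rational parabolic $P$ determines its conjugacy class of $\Q$-rational Levi subgroups, pick one $\Q$-rational Levi $L$, and observe that among the cocharacters $\mu$ with $P_\mu = P$ there is a distinguished one — namely the one factoring through the connected center $Z(L)^\circ$, trivial on $L^{\der}$, and inducing the given integer weights on the successive graded pieces of $\mathrm{Lie}(P)$ — and that this distinguished cocharacter is automatically defined over $\Q$ because everything used to single it out ($P$, $L$, $Z(L)^\circ$, $L^{\der}$, the weight filtration) is defined over $\Q$. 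It then remains only to check that this distinguished cocharacter is $G(\bar\Q)$-conjugate to $\mu_{\tilde\varSigma}$, which follows from Lemma~\ref{lem:strucunipot} on the weights of $w_{\tilde\varSigma}$ on the unipotent radical and its center, since those weights determine $\mu_{\tilde\varSigma}$ within the conjugacy classes of cocharacters attached to $P_{\tilde\varSigma}$.
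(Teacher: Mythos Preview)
Your forward direction matches the paper's. For the converse, the paper takes a different and shorter route: it does not pick a Levi \emph{subgroup}, but instead projects $\mu_{\tilde\varSigma}$ to the Levi \emph{quotient} $P_{\tilde\varSigma,\Q}/U_{\tilde\varSigma,\Q}Z_G$. Since $P$ is its own normalizer, the $P$-conjugacy class of $\mu_{\tilde\varSigma}$ (as a $P/Z_G$-valued cocharacter) is defined over $\Q$, and its image in the Levi quotient is central there, hence already $\Q$-rational. One then lifts this central cocharacter back to $P/Z_G$ over $\Q$ using \cite[IX, Thm.~3.6]{SGA3}, and observes that any such lift lies in $[\mu_{\tilde\varSigma}]$. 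This avoids having to characterize a ``distinguished'' cocharacter by its weights and check uniqueness.

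Your approach via a $\Q$-rational Levi $L$ and a distinguished cocharacter in $X_*(Z(L)^\circ)$ can also be made to work: in $G^{\ad}$ the restrictions to $Z(L)^\circ$ of the simple roots in $\tilde\varSigma$ form a basis of $X^*(Z(L)^\circ)_{\Q}$, so the condition ``weight $1$ on each $Z(L)^\circ$-isotypic piece of $\Lie U/[\Lie U,\Lie U]$'' singles out a unique cocharacter of $Z(L)^\circ$, which is then visibly Galois-fixed. Two cautions, though. First, your intermediate claim that a homogeneous space under a connected group has a rational point ``by vanishing of $H^1$'' is false in general (anisotropic tori already give counterexamples), so you were right to abandon it. Second, your appeal to Lemma~\ref{lem:strucunipot} for the weights is slightly off: that lemma computes the weights of $w_{\tilde\varSigma}=\mu_{\varSigma}\mu_{c(\varSigma)}$ on $Z(U)$ and $U/Z(U)$, and $w_{\tilde\varSigma}$ coincides with $\mu_{\tilde\varSigma}$ only when $\varSigma\cap c(\varSigma)=\emptyset$; moreover, when some simple factors have abelian $U_i$ and others do not, the weights on $Z(U)$ are not uniformly $2$. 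The clean intrinsic characterization is the weight-$1$ condition on $\Lie U/[\Lie U,\Lie U]$, not on $U/Z(U)$.
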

\begin{proof} If $[\mu_{\tilde\varSigma}]$ contains a cocharacter $\mu_{\tilde\varSigma,\Q}$ defined over $\Q,$ then 
the subspace of $\Lie G$ on which $\mu_{\tilde\varSigma,\Q}$ is non-negative is the Lie algebra of a parabolic 
in $[P_{\tilde\varSigma}],$ which is defined over $\Q.$ 

Conversely if $[P_{\tilde\varSigma}]$ contains a parabolic $P_{\tilde\varSigma,\Q}$ defined over $\Q,$ then 
$P_{\tilde\varSigma, \Q}$ is associated (as above) to some cocharacter $\mu_{\tilde \varSigma}$ (not necessarily defined over $\Q$) 
in $[\mu_{\tilde \varSigma}].$ Let $U_{\tilde\varSigma, \Q} \subset P_{\tilde\varSigma, \Q}$ denote the unipotent radical. 
As $P_{\tilde\varSigma, \Q}$ is its own normalizer, $\mu_{\tilde\varSigma}$ is determined up to conjugation by points of $P_{\tilde\varSigma, \Q}.$ 
Hence the conjugacy class of $\mu_{\tilde \varSigma}$ as a $P_{\tilde\varSigma, \Q}/Z_G$-valued cocharacter is defined over $\Q,$ 
where $Z_G$ denotes the center of $G.$ On the other hand, the composite 
$$ \GG_m \overset {\mu_{\tilde \varSigma}} \rightarrow P_{\tilde\varSigma, \Q}/Z_G \rightarrow P_{\tilde\varSigma, \Q}/U_{\tilde\varSigma, \Q}Z_G$$
is central, hence defined over $\Q.$ Now this composite can be lifted to a $P_{\tilde\varSigma, \Q}/Z_G$-valued cocharacter defined over $\Q,$ 
and any such lift is in $[\mu_{\tilde\varSigma}]$, by \cite[IX, Thm.~3.6]{SGA3}.
\end{proof}

\begin{para} Somewhat more explicit conditions which guarantee that \ref{rationalparabolic} holds are given by the following lemma.
\end{para}

\begin{lem}\label{lem:rationalcocharacter} Suppose that $G^{\ad}$ is $\Q$-simple. Then $G^{\ad} = \Res_{F/\Q} G_0$ where $F$ is a totally real field, and $G_0$ is an absolutely simple group over $F.$ 
If 
\begin{enumerate}
\item $\tilde \varSigma$ is $\Gal(\bar \Q/\Q)$-stable, and 
\item for some (and hence any, by (1)) embedding $F \rightarrow \RR,$ the $F$-rank of $G_0$ is equal to its $\RR$-rank, 
\end{enumerate}
then condition \ref{rationalparabolic} holds.
\end{lem}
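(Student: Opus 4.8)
The plan is to reduce condition~\eqref{rationalparabolic} to a statement about the Galois action on the Dynkin diagram combined with the equality of ranks in hypothesis~(2), using Lemma~\ref{lem:equivrational} to pass between cocharacters and parabolics. First I would observe that since $G^{\ad} = \Res_{F/\Q} G_0$, a parabolic of $G^{\ad}$ defined over $\Q$ is the same thing as a parabolic of $G_0$ defined over $F$, and the conjugacy class $[P_{\tilde\varSigma}]$ decomposes accordingly over the set of embeddings $F\hookrightarrow\bar\Q$; hypothesis~(1) says precisely that the type of the parabolic (the subset of the Dynkin diagram) is the same for all these embeddings, so that $[P_{\tilde\varSigma}]$ really does descend to a single conjugacy class of parabolic subgroups of $G_0$ that is $\Gal(\bar F/F)$-stable. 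Thus the problem is reduced to: an absolutely simple group $G_0$ over the real field $F$ whose $F$-rank equals its $\RR$-rank at one (hence every) real place, and a $\Gal(\bar F/F)$-stable conjugacy class of parabolics — show it contains an $F$-rational member.

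Next I would invoke the standard theory of parabolic subgroups over a field: conjugacy classes of parabolic subgroups of $G_0$ over $\bar F$ that are stable under $\Gal(\bar F/F)$ correspond to $\Gal(\bar F/F)$-stable subsets of the Dynkin diagram, and such a class contains an $F$-rational parabolic if and only if the corresponding subset contains the subset $\Delta_0$ of simple roots that vanish on the maximal $F$-split torus — i.e.\ the parabolic must dominate a minimal $F$-parabolic. This is the Borel--Tits classification (see \cite{BorelTits} or Springer's book). The same description holds over $\RR$: a $\Gal(\CC/\RR)$-stable subset gives an $\RR$-rational parabolic iff it contains the analogous subset $\Delta_\infty\subset\Delta$ attached to the maximal $\RR$-split torus. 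Now the point of hypothesis~(2): when the $F$-rank equals the $\RR$-rank, a maximal $F$-split torus remains maximal split after extending scalars to $\RR$, so $\Delta_0 = \Delta_\infty$ as subsets of the Dynkin diagram (after identifying the based root data, using that all maximal tori are conjugate over $\bar F$). Since $\mu_h$ comes from a Shimura datum, the parabolic $P_{\tilde\varSigma}$ is by construction defined over $\RR$ — indeed $P_{\varSigma}$ (the parabolic attached to $\mu_h$) is $\RR$-rational and $P_{\tilde\varSigma}\supset P_\varSigma\cap c(P_\varSigma)$ with $\tilde\varSigma$ the union of the relevant vertex sets, which one checks is stable under complex conjugation and gives an $\RR$-rational parabolic. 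Hence the subset $\tilde\varSigma$ contains $\Delta_\infty = \Delta_0$, and by the Borel--Tits criterion $[P_{\tilde\varSigma}]$ contains an $F$-rational (equivalently $\Q$-rational for $G^{\ad}$) parabolic. Applying Lemma~\ref{lem:equivrational} then gives condition~\eqref{rationalparabolic}.

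The steps, in order: (i) use $G^{\ad}=\Res_{F/\Q}G_0$ and hypothesis~(1) to reduce from $\Q$ to $F$ and to a single $\Gal(\bar F/F)$-stable diagram subset; (ii) recall the Borel--Tits correspondence between $\Gal$-stable diagram subsets and conjugacy classes of parabolics, and the criterion (``contains the minimal-parabolic subset'') for $F$-rationality; (iii) show $P_{\tilde\varSigma}$ is $\RR$-rational, using that $P_\varSigma$ is the parabolic of the Shimura datum (stabilizer of the Hodge filtration, hence $\RR$-rational) and that $\tilde\varSigma = \varSigma\cup c(\varSigma)$ is built to be $c$-stable — compatibly with Lemma~\ref{lem:strucunipot}; (iv) use hypothesis~(2) to identify $\Delta_0$ with $\Delta_\infty$, so that $\RR$-rationality of $P_{\tilde\varSigma}$ forces $F$-rationality; (v) conclude via Lemma~\ref{lem:equivrational}.

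The main obstacle I expect is step~(iv): carefully justifying that equality of $F$-rank and $\RR$-rank really does force the equality $\Delta_0 = \Delta_\infty$ of subsets of the (abstract, Galois-equipped) Dynkin diagram. This requires choosing a maximal $F$-split torus $S_0$ inside a maximal $F$-torus $T_0$, checking that $S_0$ is still maximal split over $\RR$ (which is exactly the rank hypothesis), and then comparing the two ``index'' data — the relative root systems and the distinguished orbits of simple roots — with respect to compatible choices of Borel subgroups; one must take care that the identification of based root data used to define ``subset of $\Delta(G)$'' is the same one used in defining $\tilde\varSigma$. A secondary technical point is making precise the claim in step~(iii) that $\tilde\varSigma$ yields an $\RR$-rational parabolic: this is essentially the statement that the parabolic $P_{\tilde\varSigma}$ contains $P_\varSigma\cap \overline{P_\varSigma}$ where $\overline{P_\varSigma} = c(P_\varSigma)$ is the opposite-side parabolic, and that the type of this intersection-dominating parabolic is $\tilde\varSigma = \varSigma\cup c(\varSigma)$ — a diagram-combinatorics check that should follow from the minuscule hypothesis as in the proof of Lemma~\ref{lem:strucunipot}.
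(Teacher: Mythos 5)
Your overall architecture (reduce to $G_0$ over $F$ via hypothesis (1), compare the Tits indices over $F$ and over $\RR$ using hypothesis (2), and finish with Lemma \ref{lem:equivrational}) is a legitimate alternative to the paper's argument, which instead takes an $\RR$-rational parabolic in $[P_{\tilde\varSigma_0}]$, converts it to an $\RR$-rational cocharacter, and conjugates that cocharacter into a maximal $F$-split torus (this is where (2) enters for the paper). But your step (iii) has a genuine gap, and it is exactly the input that both arguments need. The cocharacter $\mu_h$ is defined only over $\CC$, and $P_\varSigma$, the stabilizer of the Hodge filtration, is \emph{not} defined over $\RR$: already for the modular curve it is the Borel stabilizing the line $H^{1,0}$, which is not a real subspace. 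Worse, in the non-tube case the conjugacy class $[P_\varSigma]$ need not contain \emph{any} $\RR$-rational member (for $G_{0,\RR}=\mathrm{PU}(n,1)$ the class of the relevant maximal parabolic is not even stable under the $*$-action of complex conjugation). Nor does the containment $P_{\tilde\varSigma}\supseteq P_\varSigma\cap c(P_\varSigma)$ help: containing an $\RR$-rational subgroup does not make a parabolic $\RR$-rational (for the modular curve $P_\varSigma\cap c(P_\varSigma)$ is a compact maximal torus, and no parabolic containing it is defined over $\RR$). The true statement you need is that the \emph{class} $[P_{\tilde\varSigma_0}]$ contains an $\RR$-rational parabolic; this is a genuine fact about real groups of Hermitian type — the parabolics normalizing ($0$-dimensional) boundary components of the symmetric domain — and it is precisely what the paper imports from \cite[\S 3.2]{AMRT}. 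It is not a "diagram-combinatorics check" following from minusculeness, and your proposed derivation of it rests on the false premise that $P_\varSigma$ is real.

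Two smaller points. First, your statement of the Borel–Tits criterion has the bookkeeping backwards relative to the paper's conventions: the Levi type of $P_{\tilde\varSigma}$ is $\Delta\setminus\tilde\varSigma$, so the criterion is that $\Delta\setminus\tilde\varSigma$ contain $\Delta_0$ and be $*$-stable, i.e.\ $\tilde\varSigma\cap\Delta_0=\emptyset$ — not "$\tilde\varSigma\supseteq\Delta_0$" (as stated, your criterion would predict rational proper parabolics for an anisotropic form of $\mathrm{PGL}_2$). Since you apply the same (mis)stated criterion over $\RR$ and over $F$ and only use $\Delta_0=\Delta_\infty$, the slip does not break the skeleton, but it should be fixed. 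Second, once the AMRT input is granted, your index-comparison route and the paper's cocharacter-conjugation route both go through; the paper's version has the mild advantage of producing the $\Q$-rational cocharacter directly (the form in which condition \ref{rationalparabolic} is stated), with hypothesis (1) used at the end to verify membership in $[\mu_{\tilde\varSigma}]$ by projecting to $G_0$.
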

\begin{proof} For the fact that $G^{\ad}$ has the form $\Res_{F/\Q} G_0$ see \cite[2.3.4(a)]{Deligne}. 
Now suppose that the conditions (1) and (2) are satisfied. 
Fix an embedding $F \rightarrow \RR,$ so that $G_{0,\RR}$ is a factor of $G^{\ad}_{\RR},$ 
and corresponds to a connected component $\Delta(G)_0$ of the Dynkin diagram of $G.$ 
Let $\tilde \varSigma_0 = \tilde \varSigma \cap \Delta(G)_0,$ and let $\mu_{\tilde \varSigma_0}$ be 
the corresponding $G^{\ad}_{\RR,0}$-valued cocharacter. 

By \cite[\S 3.2]{AMRT}, the conjugacy class of the parabolic subgroup of $G_{0,\RR}$ corresponding to $\mu_{\tilde \varSigma_0}$ 
contains a parabolic defined over $\RR,$ and hence the conjugacy class 
$[\mu_{\tilde \varSigma_0}]$ contains a cocharacter defined over $\RR,$ as in the proof of 
\ref{lem:equivrational}. By (2), after conjugation by an element of $G_0(\RR),$ this cocharacter, which we again denote by 
$\mu_{\tilde \varSigma_0},$ factors through an $\RR$-split torus which is defined over $F,$ and $F$-split. 
Thus we have a map of $F$-groups $\mu_{\tilde \varSigma_0}: \GG_m \rightarrow G_0.$

By definition of the restriction of scalars, $\mu_{\tilde \varSigma_0}$ induces 
a map $\mu_{\tilde \varSigma}': \GG_m \rightarrow G^{\ad}$ over $\Q.$ 
To see that $\mu_{\tilde \varSigma}' \in [\mu_{\tilde \varSigma}],$ note that, by (1), $[\mu_{\tilde \varSigma}]$ is $\Gal(\bar \Q/\Q)$-stable. 
Thus it suffices to check that the projections of $[\mu_{\tilde \varSigma}]$ and $[\mu_{\tilde \varSigma}']$ onto $G_0$ are equal. 
But these are both equal to $[\mu_{\tilde \varSigma_0}].$ 

%
%
 \end{proof}

\begin{para}\label{para:rationalcocharacter} Let us explain how to apply Lemma \ref{lem:rationalcocharacter} in examples. 
First, the condition (1) implies that 
$X$ has the form $X_0^m$ for some irreducible Hermitian symmetric domain $X_0.$ When $X_0$ is not of type $D_n^{\mathbb H},$ 
then (1) is actually equivalent to this condition. In the case of type $D_n^{\mathbb H},$ there are two conjugacy classes of 
cocharacters of $G_{0,\RR},$ which are exchanged by an outer isomorphism of $G_0,$ and give rise to isomorphic (via the outer isomorphism) 
Hermitian symmetric domains. If the outer isomorphism is induced by complex conjugation, which happens when $n$ is odd, then $X$ 
having the form  $X_0^m$ still implies (1), but if $n$ is even, then the condition is stronger. 

The condition (2) can also be made more explicit in many cases: If $G$ is of type $B$ or $C$ or $D_n$ with $n$ even, then $G^{\ad}_{\RR}$ is split,  
and (2) means that $G_0$ is a split group, or (in this context) that $G$ is quasi-split. If $G$ is of type $A_n$ then (2) means that $G_0$ 
is the adjoint group of a unitary group over $F$ - that is one associated to a Hermitian form on a vector space over a quadratic CM extension of $F$ 
- and not just an inner form of such a group. In particular, when $n=1$ this covers the case of Hilbert modular varieties. 

The following lemma gives a way of constructing many examples when $X$ is irreducible.
\end{para}

\begin{cor}\label{cor:existenceShim} For any irreducible Hermitian domain $X,$ and any quadratic imaginary extension $L/\Q,$ 
there exists a Shimura datum $(G,X)$ such that $G$ is an absolutely simple group which splits over $L,$ and 
\ref{rationalparabolic} holds. 
\end{cor}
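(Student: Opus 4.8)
The plan is to construct, for the given irreducible Hermitian domain $X$ and the given imaginary quadratic field $L$, an absolutely simple $\Q$-group $G$ such that $G^{\ad}_\RR$ is the adjoint Hermitian real group whose symmetric domain is $X$, that $G$ splits over $L$, and that the $\Q$-rank of $G$ equals its $\RR$-rank; Lemma \ref{lem:rationalcocharacter} then yields condition \ref{rationalparabolic}, and $(G,X)$ is the desired datum. I would first record two reductions. Since $X$ is irreducible, any Shimura datum $(G,X)$ with $G^{\ad}$ $\Q$-simple automatically has $G^{\ad}$ absolutely simple: writing $G^{\ad}=\Res_{F/\Q}G_0$ with $F$ totally real, the domain $X$ is a product of $[F:\Q]$ Hermitian domains, forcing $[F:\Q]=1$. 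Next, if $G$ splits over $L$ then $K_\Delta\subseteq L$, so the $\Gal(\bar\Q/\Q)$-action on $\Delta(G)$ factors through $\Gal(L/\Q)=\langle c\rangle$ with $c$ the opposition involution; since $\tilde\varSigma=\varSigma\cup c(\varSigma)$ and $c^2=1$, the set $\tilde\varSigma$ is $\Gal(\bar\Q/\Q)$-stable, so hypothesis (1) of Lemma \ref{lem:rationalcocharacter} is automatic and hypothesis (2) is precisely the equality of $\Q$- and $\RR$-ranks. Finally, the axioms for $(G,X)$ to be a Shimura datum involve only $G^{\ad}_\RR$ and hold since it is of Hermitian type with no compact factor, while the associated domain is $X$ because an irreducible Hermitian domain is determined by its automorphism group. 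So it remains to produce $G$, and I would take $G$ to be the adjoint group of an explicit group, which then carries the required $\SS\to G_\RR$.

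For the classical families I would write $G$ down directly. For the domain $I_{p,q}$ (type $A_{p+q-1}$): let $G$ be the adjoint group of $\SU(\psi)$ for a rank-$(p+q)$ Hermitian form $\psi$ relative to $L/\Q$ of signature $(p,q)$ whose Witt index over $\Q$ is $\min\{p,q\}$ (for instance $\min\{p,q\}$ hyperbolic planes together with a definite Hermitian form of rank $|p-q|$ and the correct sign); then $G$ splits over $L$ since $\SU(\psi)_L=\SL_{p+q}$, $G_\RR$ is the adjoint group of $\SU(p,q)$, and the $\Q$-rank equals the $\RR$-rank $\min\{p,q\}$. For $III_n$ (type $C_n$): let $G=\PSp_{2n}$ over $\Q$, which is split. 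For $IV_m$ (types $B$ and $D$): let $G$ be the adjoint group of $\SO(q)$ for a quadratic form $q$ over $\Q$ of signature $(2,m)$ and Witt index $2$ over $\Q$ that becomes split over $L$; writing $L=\Q(\sqrt{-\delta})$ with $\delta>0$, take $q$ to be the orthogonal sum of two hyperbolic planes and a form $q_0$ which is positive definite at $\RR$ (hence $\Q$-anisotropic), built from the binary blocks $\langle1,\delta\rangle$ (each hyperbolic over $L$) with one extra $\langle1\rangle$ when $m$ is odd; then $G$ splits over $L$ and the $\Q$-rank equals the $\RR$-rank $2$ of $\SO(2,m)$. For $II_n$ (the real group $\SO^*(2n)$, type $D_n$): let $G$ be the adjoint group of $\SU(\psi)$ for a rank-$n$ skew-Hermitian form $\psi$ over a quaternion $\Q$-algebra $B$ ramified at $\infty$, choosing $B$ split by $L$ (e.g. ramified only at $\infty$ and one finite prime inert in $L$) and $\psi$ so that it becomes hyperbolic over $L$ and has maximal Witt index over $\Q$; then $G_\RR$ is the adjoint group of $\SO^*(2n)$, whose real rank $\lfloor n/2\rfloor$ is independent of $\psi$, $G$ splits over $L$, and the $\Q$-rank equals the $\RR$-rank. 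The finitely many low-rank coincidences among these types are handled by the same formulas.

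The main obstacle will be the two exceptional irreducible Hermitian domains, of type $E_6$ (complex dimension $16$) and type $E_7$ (complex dimension $27$). For these I would invoke Tits' classification of semisimple groups over number fields. One needs an inner $\Q$-form of $E_7$ (inner since $-1\in W(E_7)$) with real form $E_{7(-25)}$, with $\Q$-rank equal to its $\RR$-rank $3$, and split over $L$; and an outer $\Q$-form of $E_6$ whose $\ast$-action factors through $\Gal(L/\Q)$ (the opposition involution is the nontrivial diagram automorphism, so $K_\Delta$ is CM and hence equals $L$) with real form $E_{6(-14)}$, with $\Q$-rank equal to its $\RR$-rank $2$, and split over $L$. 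Exhibiting a $\Q$-form with prescribed real form, prescribed $\Q$-index, and prescribed compatible splitting field $L$ comes down, via the Hasse principle for simply connected groups together with the parametrization of inner forms of $E_7$ by their Tits class (resp. of these outer forms of $E_6$ by a Hermitian or cubic-norm datum over $L$), to a surjectivity statement for localization maps in Galois cohomology; this is where the real work lies, and where I would lean on the literature on exceptional Shimura data. With $G$ constructed in every case, Lemma \ref{lem:rationalcocharacter} supplies condition \ref{rationalparabolic} and $(G,X)$ is the required Shimura datum, which would complete the proof.
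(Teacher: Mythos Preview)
Your reduction is exactly what the paper does: construct an absolutely simple $\Q$-group $G$ that splits over $L$, has the correct real form, and has $\Q$-rank equal to its $\RR$-rank, then invoke Lemma \ref{lem:rationalcocharacter}. Your observation that hypothesis (1) of that lemma is automatic once $G$ splits over the imaginary quadratic field $L$---since the $\Gal(\bar\Q/\Q)$-action on $\Delta(G)$ then factors through $\Gal(L/\Q)$, whose nontrivial element is complex conjugation and hence acts by the opposition involution, which visibly fixes $\tilde\varSigma=\varSigma\cup c(\varSigma)$---is correct and is left implicit in the paper.

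The difference is entirely in how $G$ is produced. You proceed type by type; the paper does it uniformly in one stroke. Namely, let $G_0$ be the quasi-split $\Q$-group whose $\ast$-action on the Dynkin diagram is the $\Gal(\CC/\RR)$-action transported along the identification $\Gal(L/\Q)\simeq\Gal(\CC/\RR)$, so that $G_0$ splits over $L$ by construction; then \cite[Thm.~1]{PrasadRapinchuk} provides an inner form $G$ of $G_0$ with prescribed real form $G_\RR$ and with $\Q$-rank equal to the $\RR$-rank of $G_\RR$. This is precisely the existence statement you are missing for the exceptional domains: what you describe as ``a surjectivity statement for localization maps in Galois cohomology'' on which you would ``lean on the literature'' is exactly the Prasad--Rapinchuk theorem, and it is in no way specific to $E_6$ or $E_7$. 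So the exceptional cases constitute a genuine gap in your argument as written, and the single citation to Prasad--Rapinchuk both fills that gap and supersedes your explicit classical constructions.
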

\begin{proof} Let $G_{\RR}$ be the absolutely simple reductive $\RR$-group associated to $X,$ and $G_{0,\RR}$ its quasi-split inner form.  
Consider the Dynkin diagram of $G_{0,\RR}$ with its action of $\Gal(\CC/\RR) \simeq \Gal(L/\Q).$ This corresponds to a quasi-split group 
$G_0$ over $\Q,$ which splits over $L.$ Now by \cite[Thm.~1]{PrasadRapinchuk}, there exists an inner form $G$ of $G_0,$ which 
becomes isomorphic to $G_{\RR}$ over $\RR$ (as an inner form of $G_{0,\RR}$), and whose $\Q$-rank is equal to the $\RR$-rank of 
$G_{0,\RR}.$ (Note that in {\em loc.~cit.~}the number of $i$ satisfying the condition (ii) with $v = \infty$ is precisely the split rank of $G_{\RR}.$).
The corollary now follows from Lemma \ref{lem:rationalcocharacter}.
\end{proof}

\begin{para} We will now apply the theory of toroidal compactifications of Shimura varieties to our situation. We refer the reader 
to the book of Ash-Mumford-Rapoport-Tai \cite{AMRT}, and Pink's thesis \cite{Pinkthesis}; see also Madapusi-Pera's paper 
\cite[\S 2]{Keerthi} for a convenient summary. 
We suppose from now on that the condition \ref{rationalparabolic} holds. 

Let $P \in [P_{\tilde \varSigma}]$ be defined over $\Q,$ and let $U \subset P$ be its unipotent radical, 
and $Z \subset U$ the center of of $U.$ Abusing notation slightly, write $U(\Z) = U(\Q) \cap K,$ $Z(\Z) = Z(\Q) \cap K.$ 
Let $\xi_K = U(\Z)\backslash U(\RR)Z(\CC),$ and $C_K = (Z(\CC)/Z(\Z))\backslash \xi_K.$ 
Then $C_K$ is an abelian scheme and $\xi_K \rightarrow C_K$ 
is a torus bundle. Fix a fan $\Sigma,$ in $X^*(Z(\CC)/Z(\Z))_{\Q}$ so that the resulting torus embedding is smooth with a normal crossings 
divisor at the boundary. As in Lemma \ref{classificationtoric}, we obtain a smooth torus embedding over $C_K,$ $\xi_K(\Sigma) \rightarrow C_K$ 
whose boundary $D_K(\Sigma)$ is a normal crossings divisor, see \cite[\S 3]{Pinkthesis}, \cite[2.1.7]{Keerthi}. 

Note that in the constructions of {\em loc.~cit.~}one gets a smooth torus embedding over a base which is itself an abelian scheme over a Shimura variety. 
That is because those constructions start with the preimage under $P \rightarrow P/U$ of a factor of the reductive group $P/U.$ However, for 
our specific choice of parabolic $P_{\tilde \varSigma}$ the relevant factor of $P/U$ is a torus, and the Shimura variety is $0$-dimensional. 
In the notation of \cite[\S 4.7]{Pinkthesis}, our parabolic $P$ is what Pink denotes by $Q,$ and the preimage mentioned above 
is denoted by $P_1.$ These groups are denoted by $P$ and $Q_P$ in \cite[\S 2]{Keerthi}. 
\end{para}

\begin{para} By an {\em analytic neighborhood of $D_K(\Sigma)$} we mean an analytic open subset 
$V_K \subset \xi_K(\Sigma)(\CC),$ containing $D_K(\Sigma).$ We write $V_K^* = V_K\backslash D_K(\Sigma).$ 

Write $\Sh_K = \Sh_K(G,X).$ Fix a connected component $X^+ \subset X,$ and let $\Sh_K^+ \subset \Sh_K$ be the corresponding connected 
component of $\Sh_K^+.$ Then $\Sh^+_K = \Gamma \backslash X^+,$ where $\Gamma = K \cap G(\Q)_+,$ and $G(\Q)_+ \subset G(\Q)$ 
is the subgroup mapping to the connected component of the identity in $G^{\ad}(\RR)$ \cite[2.1.2]{Deligne}. In particular, for any choice of base point, 
this allows us to identify the fundamental group of $\Sh_K^+$ with $\Gamma.$

For any smooth $\CC$-scheme $Y,$ by an {\em ncd compactification} of $Y$ we mean a dense embedding $Y \subset \bar Y$ 
into a proper smooth $\CC$-scheme $\bar Y,$ such that $\bar Y\backslash Y$ is a normal crossings divisor. 
We collect the results we need in the following proposition. 
\end{para}

\begin{prop}\label{prop:toroidalsummary} There exists an ncd compactification $\Sh_K(\Sigma)^+$ of $\Sh_K$ and 
an analytic neighborhood $V_K$ of $D_K(\Sigma)$ such that 
\begin{enumerate}
\item The inclusion $V_K^* \rightarrow \xi_K(\CC)$ induces an isomorphism of topological fundamental groups.
\item There is an \'etale map of complex analytic spaces $$ \pi_K: V_K \rightarrow \Sh_K(\Sigma)^+$$ 
such that $\pi_K^{-1}(\Sh_K(\Sigma)^+\backslash\Sh_K^+) = D_K(\Sigma),$ 
and $\pi_K: V_K^* \rightarrow \Sh_K^+$ induces the natural map $U(\Z) \hookrightarrow \Gamma$ on fundamental groups.
\end{enumerate} 
\end{prop}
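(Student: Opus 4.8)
The plan is to extract this statement from the general theory of toroidal compactifications of Shimura varieties, as developed in Ash--Mumford--Rapoport--Tai \cite{AMRT} and Pink \cite{Pinkthesis}, specialized to the rational parabolic $P = P_{\tilde\varSigma}$. First I would recall the structure of the partial compactification near the boundary component attached to $P$. After fixing a suitable $K$-admissible rational polyhedral cone decomposition extending the chosen fan $\Sigma$ in $X^*(Z(\CC)/Z(\Z))_\Q$, the theory produces a smooth toroidal compactification $\Sh_K(\Sigma)^+$ with normal crossings boundary, together with the standard diagram: the torus embedding $\xi_K(\Sigma) \to C_K$ constructed above carries an action of the arithmetic group $\Gamma_P = \Gamma \cap P(\Q)$ (more precisely the relevant quotient acting through the linear part), and a neighborhood of the boundary stratum attached to $P$ in $\Sh_K(\Sigma)^+$ is analytically uniformized by (an open subset of) $\xi_K(\Sigma)(\CC)$. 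Concretely, there is a $\Gamma'$-quotient map $\xi_K(\Sigma)(\CC) \to \Sh_K(\Sigma)^+$, étale onto its image, where $\Gamma' \subset \Gamma$ is the subgroup acting; for our $P$, since the reductive quotient relevant to the boundary is a torus (the associated Shimura variety being $0$-dimensional, as remarked in the text), the group acting on the torus bundle $\xi_K \to C_K$ by deck transformations over $\Sh_K^+$ is exactly $U(\Z)$, and $\Gamma'$ maps to $\Gamma$ via $U(\Z) \hookrightarrow \Gamma$.

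Next I would produce the analytic neighborhood $V_K$. The point is that $\xi_K(\Sigma)(\CC)$ maps to $\Sh_K(\Sigma)^+$ only étale-locally near the boundary; one takes $V_K$ to be a sufficiently small analytic open neighborhood of the boundary divisor $D_K(\Sigma) \subset \xi_K(\Sigma)(\CC)$ on which the uniformization map $\pi_K$ is defined and étale, and on which $\pi_K$ restricts to an isomorphism onto an analytic neighborhood of the corresponding boundary stratum of $\Sh_K(\Sigma)^+$ modulo the $U(\Z)$-action. Shrinking $V_K$ so that it is $U(\Z)$-stable and so that $U(\Z)$ acts freely and properly discontinuously, one gets the étale map in (2), with $\pi_K^{-1}$ of the boundary equal to $D_K(\Sigma)$ by construction (the uniformization respects the boundary stratifications), and with $\pi_K \colon V_K^* \to \Sh_K^+$ inducing $U(\Z) \hookrightarrow \Gamma$ on $\pi_1$ — this last fact is precisely the statement that $U(\Z)$ is the stabilizer subgroup of the relevant boundary component, which is part of the standard package (see \cite[\S 3]{Pinkthesis}, \cite[2.1.7]{Keerthi}).

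For part (1), I would argue that $V_K^* \hookrightarrow \xi_K(\CC)$ is a homotopy equivalence by a deformation-retraction argument: $\xi_K(\Sigma)(\CC)$ retracts onto a neighborhood of $D_K(\Sigma)$ compatibly with the torus-embedding structure, and removing the boundary this gives a retraction of $\xi_K(\CC)$ onto $V_K^*$. Concretely, $\xi_K \to C_K$ is a $(\CC^\times)^t$-bundle, $\xi_K(\Sigma)$ fills in some of the cusps, and one can shrink each $\CC^\times$-fiber-direction toward the "large" end (toward the divisor) without changing homotopy type of the complement of the divisor; the fundamental group is in any case the generalized Heisenberg group $\pi_1(\xi_K(\CC))$, which is unchanged. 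The main obstacle, and the step requiring the most care, is the precise identification in part (2) that $\pi_K$ induces exactly the inclusion $U(\Z) \hookrightarrow \Gamma$ on fundamental groups (including checking that $V_K$ can be chosen small enough that no extra identifications occur and $\pi_K$ is genuinely étale, not merely a local homeomorphism onto a quotient) — this is where one must invoke the fine structure of the toroidal boundary charts rather than just their existence, and it is cleanest to cite \cite[\S 3, \S 4.7]{Pinkthesis} and \cite[\S 2]{Keerthi} for the identification of $P$ with Pink's $Q$ and the description of the uniformizing group, as the text already sets up.
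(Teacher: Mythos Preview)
Your plan correctly identifies that the content must be extracted from \cite{Pinkthesis} and \cite{Keerthi}, but the construction of $V_K$ you propose is not the one that works, and this causes genuine problems in both (1) and (2).

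The paper does not take $V_K$ to be an abstract ``sufficiently small'' tubular neighborhood of $D_K(\Sigma)$. Instead it uses the concrete open immersion $X^+ \hookrightarrow U(\RR)Z(\CC)$ supplied by \cite[4.11]{Pinkthesis}, \cite[2.1.6]{Keerthi}, and \emph{defines} $V_K^* := U(\Z)\backslash X^+$; the immersion then descends to an open immersion $V_K^* \hookrightarrow \xi_K(\CC)$. Since $X^+$ (a Hermitian symmetric domain) and $U(\RR)Z(\CC)$ are both contractible, each quotient is a $K(U(\Z),1)$ and the inclusion is automatically a $\pi_1$-isomorphism. This disposes of (1) with no retraction argument. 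It also makes the $\pi_1$ statement in (2) transparent: the map $\pi_K|_{V_K^*}$ is literally the further quotient $U(\Z)\backslash X^+ \to \Gamma\backslash X^+ = \Sh_K^+$. The extension of $V_K^*$ to an analytic neighborhood $V_K \supset D_K(\Sigma)$ and the \'etale map to a toroidal compactification are then cited from \cite[6.13, \S 6]{Pinkthesis}, \cite[2.1.22, 2.1.26]{Keerthi}.

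Your deformation-retraction approach to (1) is not reliable: the fan $\Sigma$ here is a decomposition of a cone, not of all of $X_*(T)_\Q$, so $\xi_K(\Sigma)$ is not proper over $C_K$ and there is no reason for $\xi_K(\CC)$ to retract onto a punctured neighborhood of $D_K(\Sigma)$; indeed such a neighborhood can fail even to be connected (already for $\Pb^1$ with boundary $\{0,\infty\}$ the punctured neighborhood of the boundary is disconnected). Separately, your description in (2) of shrinking $V_K$ to be ``$U(\Z)$-stable'' with $U(\Z)$ acting freely is confused about levels: $\xi_K$ is by definition $U(\Z)\backslash U(\RR)Z(\CC)$, so $U(\Z)$ has already been quotiented out and does not act on $\xi_K(\Sigma)$ or on $V_K$. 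The \'etale map $\pi_K$ is not a quotient by a residual group action; rather, $U(\Z)\backslash X^+ \to \Gamma\backslash X^+$ is the natural map between quotients of $X^+$ by a subgroup and by the full group, and one must cite the toroidal theory to see it extends \'etale across the boundary.
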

\begin{proof} By \cite[4.11]{Pinkthesis}, \cite[2.1.6]{Keerthi} there is an open immersion of complex analytic spaces 
$X^+ \rightarrow U(\RR)Z(\CC).$ Now set $V_K^* = U(\Z)\backslash X^+,$ then there is an induced open immersion $V_K^* \rightarrow \xi_K(\CC)$ (see \cite[6.10]{Pinkthesis}, 
\cite[2.1.13]{Keerthi}) which induces an isomorphism on fundamental groups, by construction. 
By \cite[6.13]{Pinkthesis}, \cite[2.1.22]{Keerthi}, the latter open immersion extends to an analytic neighborhood $V_K$ of $D_K(\Sigma).$ 
The existence of  $\Sh_K(\Sigma)^+$ and the map $\pi_K$ with the properties in (2) is the main result of \cite[\S 6]{Pinkthesis}, see also 
\cite[2.1.26]{Keerthi}. We remark that the ncd compactification $\Sh_K(\Sigma)^+,$ depends on more choices than just $\Sigma,$ 
but as these will play no role for us, we omit them from the notation.
\end{proof}

\begin{para} We continue to assume that condition \ref{rationalparabolic} holds. Suppose that $G$ admits a reductive model $G_{\Z_p}$ over $\Z_p.$ 
As the scheme of parabolic subgroups is projective 
\cite[XXVI, Cor.~3.5]{SGA3}, $P$ extends to a parabolic subgroup $P_{\Z_p} \subset G_{\Z_p}.$  Denote by $U_{\Z_p} \subset P_{\Z_p}$ the unipotent 
radical. We will sometimes write $G,$ $P$ and $U$ for $G_{\Z_p},$ $ P_{\Z_p}$ and $U_{\Z_p}$ if this causes no confusion. 
\end{para}

\begin{lem}\label{lem:Heisenbergsetup} The group $U_{\Z_p}(\Z_p)$ is a central extension of finitely generated, free abelian pro-$p$ groups. 
The map $U(\Z_p) \rightarrow U(\F_p)$ is surjective, and $U(\F_p)$ is a reduction of $U(\Z_p)$ mod $p.$
\end{lem}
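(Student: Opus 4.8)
The plan is to identify $U := U_{\Z_p}$ with the exponential of its Lie algebra and to read off everything from the grading furnished by Lemma~\ref{lem:strucunipot}. First I would invoke Lemma~\ref{lem:strucunipot} --- which, being a statement about the based root datum of $G,$ holds over $\Z_p$ (cf.\ the structure theory of parabolics in \cite[XXVI]{SGA3}) --- to write $U$ as a central extension of group schemes over $\Z_p$
$$ 1 \rightarrow Z \rightarrow U \rightarrow Q \rightarrow 1, $$
with $Z = Z(U)$ and $Q = U/Z$ vector groups, say $Z \simeq \GG_a^{d_2}$ and $Q \simeq \GG_a^{d_1}$; in particular $U$ has nilpotence class $\leq 2.$ Since $H^1_{\fppf}(R, \GG_a) = 0$ for every ring $R,$ passing to $\Z_p$-points gives an honest central extension $0 \rightarrow Z(\Z_p) \rightarrow U(\Z_p) \rightarrow Q(\Z_p) \rightarrow 0$ with $Z(\Z_p) \simeq \Z_p^{d_2}$ and $Q(\Z_p) \simeq \Z_p^{d_1},$ which settles the first assertion. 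Similarly $U(\F_p)$ sits in a central extension $0 \rightarrow Z(\F_p) \rightarrow U(\F_p) \rightarrow Q(\F_p) \rightarrow 0$ of elementary abelian $p$-groups; and since $U \simeq \mathbb{A}^{d_1 + d_2}_{\Z_p}$ as a scheme (and $\Z_p$ is complete), the natural map $U(\Z_p) \rightarrow U(\F_p)$ is surjective and fits into a map of central extensions whose outer vertical maps are the reductions $\Z_p^{d_2} \twoheadrightarrow \F_p^{d_2}$ and $\Z_p^{d_1} \twoheadrightarrow \F_p^{d_1}.$

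Granting that $U(\F_p)$ is a finite Heisenberg group, this last map of central extensions exhibits $U(\F_p)$ as a reduction of $U(\Z_p)$ mod $p$ in the sense of \ref{para:defnredn}, and the lemma follows. To see that $U(\F_p)$ is Heisenberg I would use Lemma~\ref{lem:Heisenbergcondns}: it is enough to check that $U(\F_p)$ has exponent $p,$ and since $U(\F_p) \subseteq U(\overline{\F}_p)$ it suffices to check this over $\overline{\F}_p.$ Here I would exploit that $p > 2$ is larger than the nilpotence class of $U,$ so that the exponential identifies $U_{\overline{\F}_p}$ with $\Lie U_{\overline{\F}_p}$ under the Baker--Campbell--Hausdorff product, which for class $\leq 2$ reads $\exp(x)\exp(y) = \exp(x + y + \tfrac12[x,y])$ (legitimate as $p$ is odd); consequently $\exp(x)^p = \exp(px) = e,$ so $U(\overline{\F}_p)$ has exponent $p.$ An alternative argument, parallel to the implication \emph{(iv) $\Rightarrow$ (iii)} of Lemma~\ref{lem:Heisenbergcondns}, is to observe that for $p$ odd the $p$-power map is a homomorphism of algebraic groups $U \rightarrow Z$ which is equivariant for the $\GG_m$-action of the cocharacter $w_{\tilde\varSigma},$ and hence must vanish, its source $Q$ and target $Z$ being of pure weights $1$ and $2$ by Lemma~\ref{lem:strucunipot}.

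I expect the only genuine subtlety to be this exponent-$p$ step. A priori a $2$-step nilpotent $p$-group whose center and abelianization are both elementary abelian may well have exponent $p^2$ (for instance an extraspecial group of exponent $p^2$), so one really does need the input that $U$ is the unipotent radical of a parabolic --- concretely, that $p$ is odd and, by Lemma~\ref{lem:strucunipot}, that the nilpotence class is $\leq 2$ --- in order to force the $p$-power map to be trivial. Everything else (the vanishing of $H^1(\GG_a),$ the smoothness of $U_{\Z_p},$ the freeness of $Z(\Z_p)$ and $Q(\Z_p),$ and the commutativity of the diagrams) is routine.
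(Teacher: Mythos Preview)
Your proof is correct and largely parallels the paper's. Both arguments first transport Lemma~\ref{lem:strucunipot} to $\Z_p$ (the paper does this by producing the cocharacter $\mu_{\Z_p}$ over $\Z_p$ as in Lemma~\ref{lem:equivrational}; your citation of \cite[XXVI]{SGA3} serves the same purpose) to realize $U_{\Z_p}$ as a central extension of vector groups, and then take points using the vanishing of $H^1(\GG_a)$ and smoothness of $U_{\Z_p}$.

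The only genuine difference is in verifying that $U(\F_p)$ is a finite Heisenberg group. The paper invokes criterion \emph{(iv)} of Lemma~\ref{lem:Heisenbergcondns} directly: the cocharacter $\mu_{\Z_p}$ furnishes an $\F_p^\times$-action on $U(\F_p)$ with $Z(U)(\F_p)$ of weight $2$ and $(U/Z(U))(\F_p)$ of weight $1$, and that is all Lemma~\ref{lem:Heisenbergcondns} requires. Your primary route instead targets criterion \emph{(iii)} (exponent $p$) via Baker--Campbell--Hausdorff, which is perfectly valid for a $2$-step nilpotent group in odd characteristic and has the advantage of being self-contained; your alternative weight argument is essentially the paper's argument filtered through the $p$-power map rather than applied directly. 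Either way the conclusion is the same, and your closing remark correctly isolates this step as the only place where something nontrivial (namely $p>2$ together with nilpotence class $\leq 2$) is really being used.
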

\begin{proof} As in the proof of Lemma \ref{lem:equivrational}, one sees that $P_{\Z_p} \subset G_{\Z_p}$ corresponds to a cocharacter 
$\mu_{\Z_p}$ in $[\mu_{\tilde \varSigma}],$ defined over $\Z_p.$ As $U_{\Z_p}$ is unipotent, it is an iterated extension of additive groups. 
It then follows by Lemma \ref{lem:strucunipot} that $U_{\Z_p}$ is a central extension of 
additive groups, so that $U(\Z_p)$ is a central extension of finitely generated, free, abelian pro-$p$ groups. 
As $U_{\Z_p}$ is smooth, $U(\Z_p) \rightarrow U(\F_p)$ is surjective.

To check that $U(\F_p)$ is a reduction of $U(\Z_p)$ mod $p,$ we have to check that $U(\F_p)$ is a Heisenberg group. 
This condition is vacuous if $U$ is abelian. If $U$ is not abelian, then 
by Lemma \ref{lem:strucunipot}, $\mu_{\Z_p}$ has weight $2$ on $Z(U_{\Z_p})$ and weight $1$ on $U_{\Z_p}/Z(U_{\Z_P}).$ 
Hence $\F_p^{\times}$ acting via $\mu_{\Z_p}$ has weight $2$ on $Z(U_{\Z_p})(\F_p)$ and weight $1$ on  $U_{\Z_p}/Z(U_{\Z_p})(\F_p).$ 
Thus $U(\F_p)$ is a Heisenberg group by Lemma \ref{lem:Heisenbergcondns}.
\end{proof}

\begin{para} Now suppose that $K = K_p K^p,$ with $K_p = G(\Z_p),$ and $K^p \subset G(\AA_f^p)$ compact open.
Let $K^1_p = \ker(G(\Z_p) \rightarrow G(\F_p)),$ and $K^1 = K^1_pK^p.$ As above, let $\Gamma = K \cap G(\Q)_+,$ 
and set $\Gamma_1 = K^1 \cap G(\Q)_+.$ A covering, $\Gamma_1\backslash X^+ \rightarrow \Gamma\backslash X^+,$ 
with $\Gamma, \Gamma_1$ of the above form is called a {\em principal $p$-covering}.
\end{para}

\begin{thm}\label{thm:edlsym} Assume that \ref{rationalparabolic} holds, and that $G$ extends to a reductive group scheme over 
$\Z_p.$ 
If $\varSigma \neq c(\varSigma)$ then we assume that $p > \dim X + \frac{1}{2} (\dim Z + 1),$ and that $p$ is a prime of unramified 
good reduction for $\xi_K.$ 
Then 
$$ \ed(\Gamma_1 \backslash X^+ \rightarrow \Gamma \backslash X^+; p) = \dim X. $$ 
\end{thm}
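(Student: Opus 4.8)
The upper bound $\ed(\Gamma_1\backslash X^+\to\Gamma\backslash X^+;p)\le\dim X$ is immediate, since $\dim\Gamma\backslash X^+=\dim X$ and the cover is already defined over the base; so the content is the lower bound $\ge d:=\dim X=\dim\Sh_K^+$. The plan is to transport the question to the toroidal boundary, where it becomes an instance of Corollary~\ref{cor:analyticedp} when $\varSigma\ne c(\varSigma)$, and of Corollary~\ref{cor:analyticedp2} when $\varSigma=c(\varSigma)$ (the tube case, where $C_K$ is a point, which is why no restriction on $p$ is needed).

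First I would set up the boundary chart. Using \eqref{rationalparabolic} we have the $\Q$-rational parabolic $P$, its unipotent radical $U$ with center $Z$, the torus torsor $\xi_K\to C_K$ over the abelian variety $C_K$, and — after fixing a complete smooth fan $\Sigma$ with normal crossings boundary — the proper torus embedding $\xi_K(\Sigma)\supset\xi_K$ with boundary $D_K(\Sigma)$, an ncd compactification $\Sh_K(\Sigma)^+$ of $\Sh_K^+$, an analytic neighborhood $V_K$ of $D_K(\Sigma)$ with $V_K^*:=V_K\setminus D_K(\Sigma)$, and the \'etale map $\pi_K\colon V_K\to\Sh_K(\Sigma)^+$ of Proposition~\ref{prop:toroidalsummary}, which restricts to $\pi_K\colon V_K^*\to\Sh_K^+$ inducing $U(\Z)\hookrightarrow\Gamma$ on topological $\pi_1$, and realizes $V_K^*$ as an open subset of $\xi_K(\mathbb C)$ inducing a $\pi_1$-isomorphism. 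By Lemma~\ref{lem:Heisenbergsetup} and strong approximation for $U$, the mod $p$ reduction of $\pi_1(\xi_K)=U(\Z)$ is the finite Heisenberg group $\bar E=U(\F_p)$ and $U(\Z)\twoheadrightarrow\bar E$. Hence the pullback $\pi_K^*(\Gamma_1\backslash X^+)$ is the cover induced by $U(\Z)\twoheadrightarrow\bar E\hookrightarrow\Gamma/\Gamma_1$, each connected component of which is the connected $\bar E$-cover of $V_K^*$, i.e.\ the restriction to $V_K^*$ of the cover $V_K(\bar E)\to V_K$ of Corollary~\ref{cor:analyticedp}.

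For the lower bound I would argue by contradiction, in the cohomological style of Proposition~\ref{prop:essdimlowerbounds} and Corollary~\ref{cor:analyticedp}. Suppose $e:=\ed(\Gamma_1\backslash X^+\to\Gamma\backslash X^+;p)<d$. Then there is a connected prime-to-$p$ cover $U'\to\Sh_K^+$, a dense open $W\subseteq U'$, an affine $Y^\circ$ with $\dim Y^\circ=e$, a $\Gamma/\Gamma_1$-cover $Y_1\to Y^\circ$ \'etale over $Y^\circ$, and a morphism $W\to Y^\circ$ inducing the pullback of the congruence cover over $W$. Let $\bar U'$ be the normalization of $\Sh_K(\Sigma)^+$ in $\mathbb C(U')$ (finite, of degree prime to $p$, with $U'$ its preimage over $\Sh_K^+$), put $\check V=V_K\times_{\Sh_K(\Sigma)^+}\bar U'$ (finite over $V_K$ of degree prime to $p$, \'etale over $\bar U'$), and let $\check V'\subseteq\check V$ be, after the shrinking step in the proof of Corollary~\ref{cor:analyticedp}, the preimage of $W$: a dense Zariski open lying over $V_K^*$, finite of prime-to-$p$ degree onto a dense Zariski open of $V_K^*$, with a morphism $\check V'\to W\to Y^\circ$. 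Now consider
\[
  H^d(\bar E,\F_p)\longrightarrow H^d(\xi_K,\F_p)\xrightarrow{\ \sim\ }H^d(V_K^*,\F_p)\longrightarrow H^d(\check V',\F_p),
\]
the first map being inflation along $U(\Z)\twoheadrightarrow\bar E$, which is surjective by Proposition~\ref{cohredn} and Lemma~\ref{lem:torusembedding} (using $p>\dim X+\tfrac12(\dim Z+1)$) when $\varSigma\ne c(\varSigma)$, and by the reasoning of Corollary~\ref{cor:analyticedp2} when $\varSigma=c(\varSigma)$; the isomorphism is Proposition~\ref{prop:toroidalsummary}(1). On the one hand this composite is nonzero: since $h^{0,d}_{(\xi_K(\Sigma),D_K(\Sigma))}\ne 0$ (the $T$-invariant top logarithmic form, as in Proposition~\ref{prop:edTtorsor}) and $\xi_K$ has unramified good reduction at $p$, Proposition~\ref{prop:tubular} makes $H^d(\xi_K,\F_p)\to H^d(\xi_K\cap V',\F_p)$ nonzero for the relevant Zariski open $V'\subseteq V_K$, and the trace map for the prime-to-$p$ map $\check V'\to V_K^*$ makes the final arrow injective on these classes. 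On the other hand, the pullback of the congruence cover to $\check V'$ is induced from $Y_1\to Y^\circ$ along $\check V'\to Y^\circ$; comparing with the description via $U(\Z)\twoheadrightarrow\bar E$ one sees that the homomorphism $\pi_1(\check V')\to\bar E$ factors through $\pi_1$ of a finite cover $\widetilde Y\to Y^\circ$, so $\check V'\to Y^\circ$ lifts to $\check V'\to\widetilde Y$ with $\widetilde Y$ affine of dimension $e<d$, and the displayed composite factors through $H^d(\widetilde Y^{\an},\F_p)=0$ by Andreotti--Frankel. This contradiction gives $e\ge d$.

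The main obstacle is precisely this transfer from the essential dimension of an algebraic cover of $\Sh_K^+$ to the local analytic picture at the toroidal boundary: one must extend the auxiliary prime-to-$p$ cover $U'$ across $D_K(\Sigma)$ (via normalization of $\Sh_K(\Sigma)^+$) while keeping its degree prime to $p$ and controlling where the resulting Zariski open sits over $V_K^*$, and then reproduce — now pulled back to $\check V'$ — the cohomological nonvanishing of Proposition~\ref{prop:tubular}, whose core is the construction, by blow-ups of the torus embedding, of a product neighborhood $W^-\times(Y_0\setminus D_0)$ homotopy equivalent to the torus torsor. The prismatic input enters only through Proposition~\ref{prop:tubular}/Corollary~\ref{cor:analyticedp}, which are used here as black boxes; the bounds on $p$ needed for them are exactly the hypotheses of the theorem. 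A minor but genuine point to watch is connectedness — $\pi_K^*(\Gamma_1\backslash X^+)$ and its further pullbacks are typically disconnected — which is why the argument is phrased throughout in terms of classifying homomorphisms to $\bar E$ rather than the covers themselves.
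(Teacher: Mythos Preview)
Your proposal is correct and follows essentially the same approach as the paper: transport the congruence cover to the toroidal boundary chart $V_K$, identify a component of its pullback with the Heisenberg $\bar E=U(\F_p)$-cover, and invoke the analytic nonvanishing coming from Proposition~\ref{prop:tubular}. The paper's proof is more streamlined --- it first records the clean inequality $\ed(\Gamma_1\backslash X^+\to\Gamma\backslash X^+;p)\ge\ed(V_{K^1}/V_K;p)$ by the one-line observation that algebraic Zariski-closed subsets of $\Sh_K^+$ pull back (via closure in $\Sh_K(\Sigma)^+$ and then $\pi_K^{-1}$) to analytic Zariski-closed subsets of $V_K$, and then applies Corollary~\ref{cor:analyticedp} (or~\ref{cor:analyticedp2}) as a black box --- whereas you unroll the definition of $p$-essential dimension and the proof of that corollary together inline.
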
  
\begin{proof} Let $V_K$ be an analytic neighborhood as in Prop.~\ref{prop:toroidalsummary}, and fix a base point $\bar s \in V_K.$ 
By Proposition \ref{prop:toroidalsummary}(1),  we may identify $\pi_1(V^*_K, \bar s)$ with $\pi_1(\xi_K(\CC), \bar s) \simeq U(\Z).$ 
By the strong approximation for unipotent groups, the pro-$p$ completion of $U(\Z)$ is $U(\Z_p).$ 

The pullback of $\Gamma_1 \backslash X^+ \rightarrow \Gamma \backslash X^+$ by the map $\pi_K$ in Proposition \ref{prop:toroidalsummary}(2) 
 is a covering, one of whose components $V^*_{K^1} \rightarrow V_K^*,$ corresponds to the kernel of the composite 
$$ U(\Z) \rightarrow \Gamma \rightarrow K \rightarrow G(\Z_p) \rightarrow G(\F_p). $$
That is, it is the kernel of $U(\Z) \rightarrow U(\F_p).$ 
Using the strong approximation for unipotent groups, as above, it follows that $V^*_{K^1} \rightarrow V^*_K$ is a covering with group $U(\F_p),$ 
and is the restriction of a covering of {\em schemes} $\xi_{K^1} \rightarrow \xi_K$ with group $U(\F_p).$ 
Let $\xi_{K^1}(\Sigma)$ be the normalization of $\xi_K(\Sigma)$ in $\xi_{K^1},$ and set $V_{K^1} = \xi_{K^1}(\Sigma)^{\an}|_{V_K}.$

Next suppose $B \subset \Sh^+_K = \Gamma \backslash X^+$ is a Zariski closed subset. Let $\bar B$ be its Zariski closure in 
$\Sh^+_K(\Sigma).$ Then $\pi_K^{-1}(\bar B)$ is Zariski closed in $V_K.$ It follows that  
$$  \ed(\Gamma_1 \backslash X^+ \rightarrow \Gamma \backslash X^+; p) \geq \ed(V_{K^1} \rightarrow V_K; p).$$
Here we are using the notion of $p$-essential dimension for analytic spaces introduced in \ref{analyticedp}.

As $U(\F_p)$ is a mod $p$ reduction of $U(\Z_p)$ by Lemma \ref{lem:Heisenbergsetup}, 
the theorem follows from Corollary \ref{cor:analyticedp} if $ c(\varSigma) \neq \varSigma$ and from Corollary \ref{cor:analyticedp2} 
if $c(\varSigma) = \varSigma.$
\end{proof}

\begin{cor}\label{cor:edlhodge} With the assumptions of Theorem \ref{thm:edlsym},  suppose that $(G,X)$ is of Hodge type. 
Then $p$ is a prime of unramified good reduction for $\xi_K,$ and the conclusion of the theorem holds without this assumption.
\end{cor}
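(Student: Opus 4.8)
The two assertions of the corollary are that $p$ is a prime of unramified good reduction for $\xi_K$, and that, granting this, the conclusion of Theorem~\ref{thm:edlsym} holds; the second is immediate from the first and Theorem~\ref{thm:edlsym}, so the whole content is to produce an unramified integral model of the torus torsor $\xi_K\rightarrow C_K$. The plan is to extract such a model from the theory of integral canonical models of Hodge-type Shimura varieties and their toroidal compactifications. First I would recall that, since $(G,X)$ is of Hodge type and $K_p=G(\Z_p)$ is hyperspecial, the reflex field $E$ is unramified at $p$, and (Kisin, Vasiu) $\Sh_K(G,X)$ admits a smooth integral canonical model over the localization $\O_{E,(v)}$ at a place $v\mid p$. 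Moreover, by Madapusi~Pera's construction of integral toroidal compactifications of Hodge-type Shimura varieties (see \cite{Keerthi}), for a suitable cone decomposition $\Sigma$ the compactification $\Sh_K(\Sigma)^+$ extends to a smooth model over $\O_{E,(v)}$ whose boundary is a relative normal crossings divisor.

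The key point is that in this theory the boundary charts, i.e.\ the formal completions along the boundary strata, are described \emph{integrally} by exactly the same objects as in the complex-analytic picture recalled in Proposition~\ref{prop:toroidalsummary}: they are (quotients of) formal torus embeddings over abelian schemes over the zero-dimensional Shimura varieties attached to the Levi quotients of the rational parabolics. For our parabolic $P=P_{\tilde\varSigma}$, which is defined over $\Q$ by \ref{rationalparabolic} and Lemma~\ref{lem:equivrational}, the relevant factor of $P/U$ is a torus — this is precisely why $C_K$ already sits over a zero-dimensional base in the analytic setting — and $P$ itself spreads out to a parabolic $P_{\Z_p}\subset G_{\Z_p}$ as in Lemma~\ref{lem:Heisenbergsetup}. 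Consequently the whole tower $\xi_K(\Sigma)\rightarrow \xi_K\rightarrow C_K$, together with the $T$-torsor structure $\xi_K\rightarrow C_K$, descends over $\O_{E,(v)}$ — possibly after replacing $E$ by a finite extension $E'$ in order to split the relevant boundary component; since $K_p$ is hyperspecial, such an $E'$ is again unramified at $p$. In particular $C_K$ extends to an abelian scheme $C_K^\circ$ and $\xi_K$ to a $T$-torsor $\xi_K^\circ\rightarrow C_K^\circ$ over the absolutely unramified discrete valuation ring $\O=\O_{E',(v')}$. Fixing an embedding $\O\hookrightarrow\O_C=\CC$ extending the $v'$-adic valuation then exhibits $\xi_K\rightarrow C_K$ as having unramified good reduction at $p$, and Theorem~\ref{thm:edlsym} applies verbatim.

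The main obstacle is the bookkeeping required to identify the boundary-chart structure in the integral toroidal compactification precisely as a $T$-torsor over an abelian scheme over an \emph{unramified} base, rather than over a mixed Shimura variety of positive dimension or over a base ramified at $p$; this is where one simultaneously uses the fact that $P_{\tilde\varSigma}$ has torus Levi quotient (already exploited in the proof of Theorem~\ref{thm:edlsym}) and the hyperspecial hypothesis on $K_p$, via the extension $P_{\Z_p}$ of $P$. No new argument with characteristic classes or prismatic cohomology is needed: once this structural input is in place the conclusion is supplied by Corollaries~\ref{cor:analyticedp} and \ref{cor:analyticedp2} and Theorem~\ref{thm:edlsym}.
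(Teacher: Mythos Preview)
Your plan is correct and is essentially the same as the paper's own proof: the paper simply writes ``This is a consequence of the main result of Madapusi-Pera \cite{Keerthi},'' and what you have written is an accurate unpacking of exactly that citation --- the integral toroidal compactifications of Hodge-type Shimura varieties at hyperspecial level furnish an integral model of the boundary chart $\xi_K\to C_K$ over an absolutely unramified DVR, which is precisely the unramified good reduction required by Theorem~\ref{thm:edlsym}. One small slip: writing ``$\O\hookrightarrow\O_C=\CC$'' conflates $\CC$ with a $p$-adic ring of integers; you mean to fix an embedding $E'\hookrightarrow\CC$ and equip $\CC$ with a $p$-adic valuation extending $v'$, so that the integral model over $\O_{E',(v')}$ witnesses unramified good reduction in the sense of \ref{para:goodredn}.
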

\begin{proof} This is a consequence of the main result of Madapusi-Pera \cite{Keerthi}. 
\end{proof}

\begin{cor}\label{cor:edlsym2} Let $X$ be an irreducible symmetric domain, and 
let $L/\Q$ be a quadratic extension. 
Then there exists a Shimura datum $(G,X)$ with $G$ an absolutely simple group which splits over $L,$ such that 
for any principal $p$-covering $\Gamma_1\backslash X^+ \rightarrow \Gamma \backslash X^+,$ we have 
$$ \ed(\Gamma_1\backslash X^+ \rightarrow \Gamma \backslash X^+; p ) = \dim X,$$ 
provided $p$ satisfies the following conditions if $X$ is not a tube domain:
\begin{itemize} 
\item If $X$ is of classical type, then $p > \frac{3}{2}\dim X.$ 
\item If $X$ is of type $E_6$, then $p$ is sufficiently large.
\end{itemize}
\end{cor}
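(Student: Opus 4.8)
The plan is to deduce Corollary~\ref{cor:edlsym2} from Theorem~\ref{thm:edlsym} (or its Hodge-type strengthening Corollary~\ref{cor:edlhodge}) by exhibiting, for each irreducible Hermitian domain $X$ and quadratic extension $L/\Q$, a Shimura datum $(G,X)$ that satisfies hypothesis~\eqref{rationalparabolic}, has $G$ absolutely simple splitting over $L$, admits a reductive $\Z_p$-model for the relevant $p$, and for which the numerical bound $p > \dim X + \tfrac12(\dim Z + 1)$ in Theorem~\ref{thm:edlsym} is implied by the stated conditions. First I would invoke Corollary~\ref{cor:existenceShim}: it produces a Shimura datum $(G,X)$ with $G$ absolutely simple, splitting over $L$, and with \eqref{rationalparabolic} satisfied. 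Since $G$ is split over $L$ and hence quasi-split over $\Q$ at all but finitely many primes, and since enlarging $p$ is harmless, we may assume $p$ is large enough that $G$ extends to a reductive group scheme over $\Z_p$; so the hypotheses of Theorem~\ref{thm:edlsym} are in force once the inequality on $p$ holds.

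Next I would dispose of the tube-domain case separately: when $X$ is a tube domain one has $\varSigma = c(\varSigma)$ (equivalently $U_{\tilde\varSigma} = Z(U_{\tilde\varSigma})$, by Lemma~\ref{lem:strucunipot}), so $\dim S = 0$ and Theorem~\ref{thm:edlsym} applies via Corollary~\ref{cor:analyticedp2} with \emph{no} restriction on $p$; this matches the ``provided \dots if $X$ is not a tube domain'' phrasing. For the non-tube case, the real work is the arithmetic of the numerical bound. Here $\dim X$ is the complex dimension of the Hermitian domain, $Z = Z(U_{\tilde\varSigma})$ is the center of the unipotent radical attached to the ``doubled'' cocharacter $\mu_{\tilde\varSigma} = \mu_{\varSigma}\mu_{c(\varSigma)}$, and one must check $\dim Z$ is small relative to $\dim X$. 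The plan is to go through the (short) list of irreducible Hermitian types — $A_{n}$ (type $I_{p,q}$), $B$/$D$ (type $IV_n$), $C_n$ (type $III$), $D_n^{\mathbb H}$ (type $II$), and the exceptional $E_6$, $E_7$ — and in each non-tube case compute or bound $\dim Z$ from the description of root spaces in Lemma~\ref{lem:strucunipot}, reading off from the Dynkin diagram which roots have the simple root in $\tilde\varSigma$ appearing with the required multiplicity. For the classical types one checks $\dim X + \tfrac12(\dim Z+1) < \tfrac32 \dim X$, i.e. $\dim Z + 1 < \dim X$; for $E_6$ the ranks are fixed numbers so ``$p$ sufficiently large'' trivially covers the bound. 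For $E_7$ (and the tube domains among classical types) the domain is tube, so there is nothing to check.

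The main obstacle I expect is precisely this last, bookkeeping-heavy step: verifying case by case that $\dim Z + 1 < \dim X$ for every non-tube irreducible Hermitian domain of classical type. This requires correctly identifying, for each type, the subset $\varSigma$ (a single miniscule node), its conjugate $c(\varSigma)$ under the opposition involution, the resulting two-step nilpotent $U_{\tilde\varSigma}$, and the dimension of its center — and then comparing with $\dim X = \dim(U_{\tilde\varSigma}) = \dim Z + \dim(U_{\tilde\varSigma}/Z)$. The inequality $\dim Z + 1 < \dim X$ is equivalent to $\dim(U_{\tilde\varSigma}/Z) \geq 2$, which fails exactly in the tube case $Z = U_{\tilde\varSigma}$ and otherwise holds with room to spare, but one should confirm there is no low-rank coincidence (e.g. small $n$ in type $I_{1,q}$, i.e. $X$ a ball, where $U$ is a Heisenberg group with one-dimensional center — there $\dim X = q \geq 2$ and $\dim Z = 1$, so $\dim Z + 1 = 2 \leq q$, with equality only when $X$ is a disc, which is a tube domain). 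A clean way to organize this is to note that in every non-tube case $U_{\tilde\varSigma}/Z$ is the tensor (or sum) of the two ``halves'' coming from $\varSigma$ and $c(\varSigma)$ and hence has dimension $\geq 2\cdot\dim(\text{each half}) \geq 2$; I would phrase the argument that way and relegate the explicit dimension tables to a remark or a citation to \cite{AMRT}.
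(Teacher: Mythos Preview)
Your outline misses the one hypothesis of Theorem~\ref{thm:edlsym} that is not purely numerical: in the non-tube case one must also know that $p$ is a prime of \emph{unramified good reduction for $\xi_K$}. You never address this. Saying ``enlarging $p$ is harmless'' would only give the result for all sufficiently large $p$, whereas the corollary claims it for every $p>\tfrac{3}{2}\dim X$ in the classical case. The paper handles this by observing that for classical non-tube $X$ the datum $(G,X)$ is of abelian type, passing along a central isogeny to a Hodge-type datum $(G',X')$ (with $G'$ still reductive over $\Z_p$, cf.\ \cite[3.4.13]{Kisin}), and then invoking Corollary~\ref{cor:edlhodge}: Madapusi-Pera's integral models \cite{Keerthi} guarantee that $\xi_{K'}$ has unramified good reduction at $p$. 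One then transfers the $p$-incompressibility back from the $G'$-covering to the $G$-covering as in \cite[4.3.12]{FKW}. Since the $(G,X)$ produced by Corollary~\ref{cor:existenceShim} has $G$ adjoint, it is not itself of Hodge type, so this detour through $(G',X')$ is genuinely needed; merely citing Corollary~\ref{cor:edlhodge} in your opening sentence does not substitute for carrying it out.

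Two smaller points. First, your formula $\dim X=\dim U_{\tilde\varSigma}$ is wrong: the open immersion $X^+\hookrightarrow U(\RR)Z(\CC)$ gives $\dim_{\CC}X=\tfrac{1}{2}(\dim U+\dim Z)$, so $\dim Z=\dim X-\tfrac{1}{2}\dim(U/Z)$. With this in hand the paper dispatches the numerical bound in one line,
\[
\dim X+\tfrac{1}{2}(\dim Z+1)=\tfrac{3}{2}\dim X-\tfrac{1}{4}\dim(U/Z)+\tfrac{1}{2}\le\tfrac{3}{2}\dim X,
\]
using only that $\dim(U/Z)\ge 2$, which holds because in the non-tube case the base abelian variety $C_K$ has positive dimension $\tfrac{1}{2}\dim(U/Z)$. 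So your proposed type-by-type census, while not incorrect in spirit, is unnecessary. Second, the existence of a reductive $\Z_p$-model is already built into the definition of a principal $p$-covering, so there is nothing to arrange there.
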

\begin{proof} Apply Corollary \ref{cor:existenceShim}, to obtain a Shimura datum $(G,X)$ such that $G$ is an absolutely simple group which splits over $L,$ and such that \ref{rationalparabolic} holds. Our definition of principal $p$-coverings already assumes that $G$ admits a reductive model over $\Z_p,$ so we assume this from now on.

If $X$ is of tube type (that is $c(\varSigma) = \varSigma$), or $X$ is of type $E_6,$ then the corollary follows from Theorem \ref{thm:edlsym}. 

Suppose $X$ is not a tube domain and is of classical type. Then $(G,X)$ is of abelian type \cite[2.3.10]{Deligne}. Recall that this means 
(since  $G$ is adjoint), that there is a morphism of Shimura data $(G',X') \rightarrow (G,X),$ induced by a 
central isogeny $G' \rightarrow G,$ with $(G',X')$ of Hodge type. As in \cite[3.4.13]{Kisin}, we can assume that $G'$ again has a reductive 
model over $\Z_p.$ 

Now we apply \ref{cor:edlhodge} to a principal $p$-covering 
$\Gamma_1'\backslash X^+ \rightarrow \Gamma'\backslash X^+,$ 
coming from the group $G'.$ 
We obtain the $p$-incompressibility of this covering for $p > \dim X + \frac{1}{2}(\dim Z+1).$ 
As 
$$\dim X = \frac{1}{2} (\dim U + \dim Z) = \frac{1}{2}\dim (U/Z) + \dim Z,$$ we have 
$$ \dim X + \frac{1}{2}(\dim Z+1) = \dim X +  \frac{1}{2}(\dim X - \frac{1}{2}(\dim U/Z)  + 1) \leq \frac{3}{2} \dim X. $$
Now a calculation as in \cite[4.3.12]{FKW} shows that the kernel and cokernel of 
$\Gamma_1'\backslash \Gamma' \rightarrow \Gamma_1\backslash \Gamma$ are finite groups of order prime to $p.$ 
The result now follows by \cite[2.2.7]{FKW}.
\end{proof}

\bibliographystyle{apa}
\begin{bibsection}
\begin{biblist}

\bib{AdemMilgram}{book}{
   author={Adem, Alejandro},
   author={Milgram, R. James},
   title={Cohomology of finite groups},
   series={Grundlehren der Mathematischen Wissenschaften},
   volume={309},
   edition={2},
   publisher={Springer-Verlag, Berlin},
   date={2004},
   pages={viii+324},
   }
	
\bib{AF}{article}{
	AUTHOR = {Andreotti, Aldo},
	AUTHOR = {Frankel, Theodore},
	TITLE = {The Lefschetz theorem on hyperplane sections},
	JOURNAL = {Ann. of Math.},
	SERIES = {2},
	VOLUME = {69},
	YEAR = {1959}, 
	PAGES={713-717}
}

\bib{AnschutzLeBras}{article}{
	AUTHOR = {Ansch\"utz, Johannes},
     AUTHOR = {Le Bras, Arthur-C\'esar},
   	TITLE = {Prismatic Dieudonn\'e Theory},
   JOURNAL = {Forum Math. Pi},
   VOLUME = {11},
  PAGES = {92 pages},
 date={2023}
 }  
 
 \bib{Arnold}{article}{
   author={Arnol\cprime d, Vladimir},
   title={Certain topological invariants of algebraic functions},
   language={Russian},
   journal={Trudy Moskov. Mat. Ob\v{s}\v{c}.},
   volume={21},
   date={1970},
   pages={27--46},
}
 
 \bib{Arnold2}{article}{
   author={Arnol\cprime d, Vladimir},
   title={Topological invariants of algebraic functions. II},
   language={Russian},
   journal={Funkcional. Anal. i Prilo\v{z}en.},
   volume={4},
   date={1970},
   number={2},
   pages={1--9},}

\bib{AMRT}{book}{
	author={Ash, Avner},
	author={Mumford, David},
	author={Rapoport, Michael},
	author={Tai, Yung-Sheng},
	title={Smooth compactifications of locally symmetric varieties},
	series={Cambridge Mathematical Library},
	edition={2},
	note={With the collaboration of Peter Scholze},
	publisher={Cambridge University Press, Cambridge},
	date={2010},
	pages={x+230},
}

 \bib{BensonCarlson}{article}{
   author={Benson, David},
   author={Carlson, Jon},
   title={The cohomology of extraspecial groups},
   journal={Bull. London Math. Soc.},
   volume={24},
   date={1992},
   number={3},
   pages={209--235},
   }

\bib{BhattScholzeproetale}{article}{
   author={Bhatt, Bhargav},
   author={Scholze, Peter},
   title={The pro-\'{e}tale topology for schemes},
   language={English, with English and French summaries},
   journal={Ast\'{e}risque},
   number={369},
   date={2015},
   pages={99--201},
}

\bib {BhattScholze}{article}{
    AUTHOR = {Bhatt, Bhargav}, 
    AUTHOR = {Scholze, Peter},
     TITLE = {Prisms and Prismatic Cohomology},
   JOURNAL = {Ann. of Math. (2)},
   VOLUME = {196},
  PAGES = {1135--1275},
 YEAR={2022},
 }
 
  \bib{BlochEsnault}{article}{
 	AUTHOR = {Bloch, Spencer},
 	AUTHOR = {Esnault, H\'el\`ene},
 	TITLE = {The coniveau filtration and non-divisibility for algebraic cycles},
 	JOURNAL = {Math. Ann.},
 	VOLUME = {304},
 	YEAR = {1996},
 	NUMBER = {2},
 	PAGES = {303--314},
 }
 
 \bib{Bogomolov}{article}{
   author={Bogomolov, Fedor},
   title={Stable cohomology of groups and algebraic varieties},
   language={Russian, with Russian summary},
   journal={Mat. Sb.},
   volume={183},
   date={1992},
   number={5},
   pages={3--28},
   translation={
      journal={Russian Acad. Sci. Sb. Math.},
      volume={76},
      date={1993},
      number={1},
      pages={1--21},
       }
}

\bib{BW}{book}{
	author={Borel, Armand},
	author={Wallach, Nolan},
	title={Continuous cohomology, discrete subgroups, and representations of
		reductive groups},
	series={Mathematical Surveys and Monographs},
	volume={67},
	edition={2},
	publisher={American Mathematical Society, Providence, RI},
	date={2000},
	pages={xviii+260},
}

\bib{BF}{article}{
	AUTHOR = {Brosnan, Patrick},
	AUTHOR = {Fakhruddin, Najmuddin},
	TITLE={Fixed points, local monodromy, and incompressibility of congruence covers},
	JOURNAL={J. Algebraic Geom. (to appear)},
	DOI = {https://doi.org/10.1090/jag/800 }, 
	YEAR={2023},
}

 \bib{Brown}{book}{
   author={Brown, Kenneth },
   title={Cohomology of groups},
   series={Graduate Texts in Mathematics},
   volume={87},
   note={Corrected reprint of the 1982 original},
   publisher={Springer-Verlag, New York},
   date={1994},
   pages={x+306},
   }

   \bib{BR}{article} {
    AUTHOR = {Buhler, Joe},
    AUTHOR = {Reichstein, Zinovy},
     TITLE = {On the essential dimension of a finite group},
   JOURNAL = {Compositio Math.},
      VOLUME = {106},
      YEAR = {1997},
    NUMBER = {2},
     PAGES = {159--179},
     }

	 \bib{BR2}{article}{
   author={Buhler, Joe},
   author={Reichstein, Zinovy},
   title={On Tschirnhaus transformations},
   conference={
      title={Topics in number theory},
      address={University Park, PA},
      date={1997},
   },
   book={
      series={Math. Appl.},
      volume={467},
      publisher={Kluwer Acad. Publ., Dordrecht},
   },
   date={1999},
   pages={127--142},
   }

	\bib{Burda}{article}{
   author={Burda, Yuri},
   title={Coverings over tori and topological approach to Klein's resolvent problem},
   journal={Transform. Groups},
   volume={17},
   date={2012},
   number={4},
   pages={921--951},
}

\bib{CT}{article}{
    AUTHOR = {Colliot-Th\'el\`ene, Jean-Louis},
    TITLE = {Exposant et indice d'algèbres simples centrales non ramifi\'ees. (With an appendix by Ofer Gabber.)},
    JOURNAL = {Enseigne. Math.},
    SERIES = {2},
    VOLUME = {48},
    YEAR = {2002},
    NUMBER = {1--2},
    PAGES = {127-146}
}

\bib{Deligne} {article}{
    AUTHOR = {Deligne, Pierre},
     TITLE = {Vari\'et\'es de {S}himura: interpr\'etation modulaire, et techniques de construction de mod\`eles canoniques},
 BOOKTITLE = {Automorphic forms, representations and {$L$}-functions
              ({P}roc. {S}ympos. {P}ure {M}ath., {O}regon {S}tate {U}niv.,
              {C}orvallis, {O}re., 1977), {P}art 2},
    SERIES = {Proc. Sympos. Pure Math., XXXIII},
     PAGES = {247--289},
 date={1979},
   }

\bib {DeligneIllusie}{article}{
    AUTHOR = {Deligne, Pierre}, 
    AUTHOR = {Illusie, Luc},
     TITLE = {Rel\`evements modulo {$p^2$} et d\'{e}composition du complexe de de
              {R}ham},
   JOURNAL = {Invent. Math.},
      VOLUME = {89},
      YEAR = {1987},
    NUMBER = {2},
     PAGES = {247--270},}

\bib{DLLZ}{article}{
  AUTHOR = {Diao, Hansheng},
   AUTHOR = {Lan, Kai-Wen},
    AUTHOR = {Liu, Ruochuan},
     AUTHOR = {Zhu, Xinwen},
     TITLE={Logarithmic adic spaces: Some foundational results},
JOURNAL={arXiv:1912.09836},
YEAR={2019},
PAGES= {81 pages},
}

\bib{Diaz}{article}{
	AUTHOR = {Diaz, Humberto},
	TITLE = {Nondivisible cycles on products of very general Abelian varieties}, 
	JOURNAL = {J. Alg. Geom.},
	VOLUME = {30},
	NUMBER = {3},
	YEAR = {2021},
	PAGES = {407--432},
}

\bib{EGA}{article} {
	AUTHOR = {Grothendieck, Alexander},
	TITLE = {\'El\'ements de g\'eom\'etrie alg\'ebrique},
	JOURNAL = {Inst. Hautes \'Etudes Sci. Publ. Math.},
	NUMBER = {4,8,11,17,20,24,32},
	YEAR = {1961,1962,1964},
	label={EGA},
}

\bib{FS}{article}{
 AUTHOR = {Fakhruddin, Najmuddin}, 
    AUTHOR = {Saini, Rijui},
    TITLE={Finite groups scheme actions and incompressibility of Galois covers: beyond the ordinary case},
    JOURNAL={Doc. Math.},
    VOLUME = {27},
    PAGES={151-182},
	YEAR={2022},
    }

\bib {FKW}{article}{
    AUTHOR = {Farb, Benson}, 
    AUTHOR = {Kisin, Mark},
    AUTHOR = {Wolfson, Jesse},
    TITLE = {The essential dimension of congruence covers},
   JOURNAL = {Compos. Math.},
   VOLUME = {157},
   NUMBER = {11},
  PAGES = {2407-2432},
 YEAR={2021},
 }

\bib {FKW2}{article}{
    AUTHOR = {Farb, Benson}, 
    AUTHOR = {Kisin, Mark},
   AUTHOR = {Wolfson, Jesse},
   TITLE = {Modular functions and resolvent problems (with an appendix by Nate Harman)},
  JOURNAL = {Math. Ann.},
  VOLUME = {386},
  YEAR = {2023},
 PAGES = {113--150},
}

 \bib{Fultontoric}{book}{
   author={Fulton, William},
   title={Introduction to toric varieties},
   series={Annals of Mathematics Studies},
   volume={131},
   note={The William H. Roever Lectures in Geometry},
   publisher={Princeton University Press, Princeton, NJ},
   date={1993},
   pages={xii+157},
   }
   
   \bib{GS}{article}{
   author={Gillet, Henri},
   author={Soul\'{e}, Christophe},
   title={Descent, motives and $K$-theory},
   journal={J. Reine Angew. Math.},
   volume={478},
   date={1996},
   pages={127--176},
   }

 
 \bib{HM}{article}{
 	AUTHOR = {Hain, Richard},
 	AUTHOR = {Matsumoto, Makoto},
 	TITLE = {Galois Actions on Fundamental Groups of Curves and the Cycle $C-C^{-}$}, 
 	JOURNAL = {J. Inst. Math. Jussieu},
 	VOLUME = {4},
 	NUMBER = {3},
 	YEAR = {2005},
 	PAGES = {363--403},
}

 \bib{Huber}{book}{
   author={Huber, Roland},
   title={\'{E}tale cohomology of rigid analytic varieties and adic spaces},
   series={Aspects of Mathematics, E30},
   publisher={Friedr. Vieweg \& Sohn, Braunschweig},
   date={1996},
   pages={x+450},
    }

 \bib{IllusieOverview}{article}{
   author={Illusie, Luc},
   title={An overview of the work of K. Fujiwara, K. Kato, and C. Nakayama
   on logarithmic \'{e}tale cohomology},
   note={Cohomologies $p$-adiques et applications arithm\'{e}tiques, II},
   journal={Ast\'{e}risque},
   number={279},
   date={2002},
   pages={271--322},
}

\bib{Katolog}{article}{
AUTHOR = {Kato, Kazuya},
   TITLE ={Logarithmic structures of Fontaine-Illusie},
   conference={
      title={Algebraic analysis, geometry, and number theory},
      address={Baltimore, MD},
      date={1988},
   },
   book={
      publisher={Johns Hopkins Univ. Press, Baltimore, MD},
   },
   date={1989},
   pages={191--224},
 }

     \bib{Katz}{article}{
   author={Katz, Nicholas M.},
   title={$p$-adic properties of modular schemes and modular forms},
   conference={
      title={Modular functions of one variable, III},
      address={Proc. Internat. Summer School, Univ. Antwerp, Antwerp},
      date={1972},
   },
   book={
      publisher={Springer, Berlin},
   },
   date={1973},
   pages={69--190. Lecture Notes in Mathematics, Vol. 350},
   }
   
   \bib{Kempfetal}{book}{
   author={Kempf, George},
   author={Knudsen, Finn Faye},
   author={Mumford, David},
   author={Saint-Donat, Bernard},
   title={Toroidal embeddings. I},
   series={Lecture Notes in Mathematics, Vol. 339},
   publisher={Springer-Verlag, Berlin-New York},
   date={1973},
   pages={viii+209},
   }

\bib{Kisin}{article}{
    AUTHOR = {Kisin, Mark},
     TITLE = {Integral models for {S}himura varieties of abelian type},
   JOURNAL = {J. Amer. Math. Soc.},
      VOLUME = {23},
      YEAR = {2010},
    NUMBER = {4},
     PAGES = {967--1012},
     }
 
 \bib{Klein}{book}{
 	LABEL = {Kl1884},
 	AUTHOR = {Klein, Felix},
 	TITLE ={Vorlesungen \"uber das Ikosaeder und die Aufl\"osung der Gleichungen vom fünften Grade (Lectures on the Icosahedron and the Solution of the Equation of the Fifth Degree)},
 	PUBLISHER = {Leipzig, T\"ubner},
 	YEAR = {1884},
 }
 
\bib{KleinLetter}{article}{
	LABEL = {Kl1888},
	AUTHOR = {Klein, Felix},
	TITLE = {Sur la resolution, par les fonctions hyperelliptiques de l'equation du vingt-septieme degre, de laquelle depend la determination des vingt-sept droites d'une surface cubique},
	JOURNAL = {Journal de Math\'ematiques pures et appliqu\'ees},
	SERIES = {4},
	VOLUME = {4},
	PAGES = {169-176},
	YEAR = {1888},
}

\bib{KleinAbel}{article}{
	LABEL = {Kl1890},
	AUTHOR = {Klein, Felix},
	TITLE = {Zur theorie der Abel'schen Functionen},
	JOURNAL = {Math. Ann.},
	VOLUME = {36},
	PAGES = {1-83},
	YEAR = {1890},
}

\bib{Koizumi}{article}{
   author={Koizumi, Shoji},
   title={On specialization of the Albanese and Picard varieties},
   journal={Mem. Coll. Sci. Univ. Kyoto Ser. A. Math.},
   volume={32},
   date={1960},
   pages={371--382},
  }

   \bib {KoshikawaI}{article}{
    AUTHOR = {Koshikawa, Teruhisa}, 
         TITLE = {Logarithmic Prismatic Cohomology},
   JOURNAL = {arXiv:2007.14037},
  PAGES = {64 pages},
 date={2020}
 }  
 
   \bib {KoshikawaYao}{article}{
    AUTHOR = {Koshikawa, Teruhisa}, 
     AUTHOR = {Yao, Zijian}, 
         TITLE = {Logarithmic Prismatic Cohomology II},
   JOURNAL = {arXiv:2306.00364},
   PAGES = {98 pages}, 
   YEAR = {2023},
}

\bib{Kronecker}{article}{
	LABEL = {Kr1861},
	AUTHOR = {Kronecker, Leopold},
	TITLE = {Ueber die Gleichungen f\"unften Grades},
	JOURNAL = {Journal f\"ur die reine und angewandte Mathematik},
	VOLUME = {59},
	PAGES = {306--310},
	YEAR = {1861},
}

\bib{LaumonMoretBailly}{book}{
   author={Laumon, G\'{e}rard},
   author={Moret-Bailly, Laurent},
   title={Champs alg\'{e}briques},
   language={French},
   series={Ergebnisse der Mathematik und ihrer Grenzgebiete. 3. Folge. A},
   volume={39},
   publisher={Springer-Verlag, Berlin},
   date={2000},
   pages={xii+208},
}
 
 \bib{Loj}{article}{
   author={Lojasiewicz, Stanislaw},
   title={Triangulation of semi-analytic sets},
   journal={Ann. Scuola Norm. Sup. Pisa Cl. Sci. (3)},
   volume={18},
   date={1964},
   pages={449--474},
}

\bib{PR}{article}{
	AUTHOR = {Patel, Deepam},
	AUTHOR = {Ravindra, G.V.},
	TITLE = {Lefschetz theorems for torsion algebraic cycles in codimension 2},
	JOURNAL = {Adv. Math.}, 
	VOLUME = {316},
	YEAR = {2017},
	PAGES = {554--575}
}

\bib{Keerthi}{article}{
   author={Madapusi Pera, Keerthi},
   title={Toroidal compactifications of integral models of Shimura varieties of Hodge type},
      journal={Ann. Sci. \'{E}c. Norm. Sup\'{e}r. (4)},
   volume={52},
   date={2019},
   number={2},
   pages={393--514},
}

\bib{PeyreUnramified}{article}{
   author={Peyre, Emmanuel},
   title={Unramified cohomology of degree 3 and Noether's problem},
   journal={Invent. Math.},
   volume={171},
   date={2008},
   number={1},
   pages={191--225},
   }
   
 \bib{Pinkthesis}{book}{
   author={Pink, Richard},
   title={Arithmetical compactification of mixed Shimura varieties},
   series={Bonner Mathematische Schriften},
   volume={209},
   note={Dissertation, Rheinische Friedrich-Wilhelms-Universit\"{a}t Bonn, Bonn,
   1989},
   publisher={Universit\"{a}t Bonn, Mathematisches Institut, Bonn},
   date={1990},
   pages={xviii+340},
  }
		
\bib{PrasadRapinchuk}{article}{
   author={Prasad, Gopal},
   author={Rapinchuk, Andrei S.},
   title={On the existence of isotropic forms of semi-simple algebraic
   groups over number fields with prescribed local behavior},
   journal={Adv. Math.},
   volume={207},
   date={2006},
   number={2},
   pages={646--660},
}

 \bib{ReiYo}{article} {
    AUTHOR = {Reichstein, Zinovy},
    AUTHOR = {Youssin, Boris},
     TITLE = {Essential dimensions of algebraic groups and a resolution
              theorem for {$G$}-varieties},
      NOTE = {With an appendix by J\'{a}nos Koll\'{a}r and Endre Szab\'{o}},
   JOURNAL = {Canad. J. Math.},
                    VOLUME = {52},
      YEAR = {2000},
    NUMBER = {5},
     PAGES = {1018--1056},
      }

\bib{Sca}{article}{
	AUTHOR = {Scavia, Federico},
   TITLE = {Varieties Over $\overline{\mathbb{Q}}$ with Infinite Chow Groups modulo Almost All Primes},
JOURNAL = {arXiv:2307.05729},
PAGES = {17 pages}, 
YEAR = {2023},
}

\bib{Schoen}{article}{
	AUTHOR = {Schoen, Chad}, 
	TITLE = {Complex varieties for which the Chow group mod $n$ is not finite},
	JOURNAL = {J. Alg. Geom.},
	VOLUME = {11}, 
	PAGES = {41--100}, 
	YEAR = {2002},
}

\bib{Serre}{book}{
	author={Serre, Jean-Pierre},
	title={Galois Cohomology},
	series={Springer Monographs in Mathematics},
	publisher={Springer},
	date={2002},
	pages={210},
}

 \bib{SGA3}{collection}{
   AUTHOR={Demazure, Michel},
   AUTHOR={Grothendieck, Alexander},
   Title={Sch\'{e}mas en groupes},
      series={Lecture Notes in Mathematics, Vol. 151, 152, 153},
   note={S\'{e}minaire de G\'{e}om\'{e}trie Alg\'{e}brique du Bois Marie 1962/64 (SGA 3)}, 
      publisher={Springer-Verlag, Berlin-New York},
   date={1970},
   label={SGA3},
   }

\bib{SGA4}{book}{
   author={Artin, Michael},
   author={Grothendieck, Alexander},
   author={Verdier, Jean-Louis},
   title={Th\'{e}orie des topos et cohomologie \'{e}tale des sch\'{e}mas.Tome 3},
   series={Lecture notes in Math},
   volume={305.},
   note={S\'{e}minaire de G\'{e}om\'{e}trie Alg\'{e}brique du Bois-Marie
   1963--1964 (SGA 4), avec le collaboration de P. Deligne et B. Saint-Donat},
    publisher={Springer-Verlag, Berlin-New York},
   date={1973},
   pages={vi+640},
   label={SGA4},
}

\bib{Totaro}{article}{
	AUTHOR = {Totaro, Burt},
	TITLE = {Complex Varieties with Infinite Chow Groups Modulo 2}, 
	JOURNAL = {Ann. of Math. (2)},
	VOLUME = {183},
	YEAR = {2016},
	PAGES = {363--375},
}

\bib{Tschebotarow}{article}{
    AUTHOR = {Tschebotar\"ow, Nikolaj},
    TITLE = {Die Probleme der Modernen Galoisschen Theorie},
    JOURNAL = {Comment. Math. Helv.},
    VOLUME = {6},
     PAGES = {235--283},
      YEAR = {1934},
}
\end{biblist}
\end{bibsection}

\end{document}